\newcommand{\CC}{{\mathbb C}}
\def\bege{\begin{equation}} \def\ende{\end{equation}}
\def\begr{\begin{eqnarray}} \def\endr{\end{eqnarray}}
\def\BB{ \mathbb{B}}
\def\SS{ \mathbb{S}}
\def\CC{ \mathbb{C}}
\def\B{\mathcal{B}}
\def\R{\mathcal{R}}
\def\I{\mathcal{I}}
\def\D{\mathbb{D}}
\def\N{\mathbb N}
\def\hD{\hat{\mathcal{D}}}
\def\a{\alpha}
\def\vp{\varphi}
\def\om{\omega}
\def\p{{\prime}}
\def\begr{\begin{eqnarray}} \def\endr{\end{eqnarray}}
\def\msk{\medskip}
\newtheorem{Lemma}{Lemma}
\newtheorem{Theorem}{Theorem}
\newtheorem{Corollary}{Corollary}
\newtheorem{Proposition}{Proposition}
\newtheorem{Remark}{Remark}
\begin{document}
\title[  ]{ Weighted Bergman spaces induced by  doubling weights in the unit ball of $\mathbb{C}^n$ }
 \author{ Juntao Du,    Songxiao Li$\dagger$, Xiaosong Liu and Yecheng Shi  }
 \address{Juntao Du\\ Faculty of Information Technology, Macau University of Science and Technology, Avenida Wai Long, Taipa, Macau.}
 \email{jtdu007@163.com  }
 \address{Songxiao Li\\ Institute of Fundamental and Frontier Sciences, University of Electronic Science and Technology of China,
 610054, Chengdu, Sichuan, P.R. China. } \email{jyulsx@163.com}

\address{Xiaosong Liu \\ S Department of Mathematics, Shantou University, Shantou 515063,  P. R. China }\email{gdxsliu@163.com}

\address{Yecheng Shi\\    School of Mathematics and Statistics, Lingnan Normal University,
     Zhanjiang 524048, Guangdong, P. R. China}\email{ 09ycshi@sina.cn}
 \subjclass[2000]{32A36, 47B38 }
 \begin{abstract}  This paper is devoted to the study of the weighted Bergman space $A_\omega^p $ in the unit ball $\mathbb{B}$ of $\mathbb{C}^n$ with doubling weight $\omega$  satisfying
$$\int_r^1\omega(t)dt   <C \int_{\frac{1+r}{2}}^1\omega(t)dt   ,\,\,  0\leq r<1.$$ 
 The $q-$Carleson measures for $A_\omega^p$ are characterized in terms of a neat geometric condition involving Carleson   block.  Some equivalent characterizations for  $A_\omega^p$ are obtained by using the radial derivative and admissible approach regions.   The boundedness and compactness of Volterra integral operator $T_g:A_\omega^p\to A_\omega^q$ are also investigated in this paper with $0<p\leq q<\infty$, where  $$T_gf(z)=\int_0^1 f(tz)\Re g(tz)\frac{dt}{t}, ~~\qquad~~~~f\in H(\mathbb{B}), ~~z\in \mathbb{B}.  $$
   \thanks{$\dagger$ Corresponding author.}
 \vskip 3mm \noindent{\it Keywords}: Weighted Bergman space, Carleson measure, Volterra integral operator, doubling   weight.
 \end{abstract}
 \thanks{This  project was partially supported by the Macao Science and Technology Development Fund (No.186/2017/A3) and NNSF of China (No. 11720101003).   }
 \maketitle

\section{Introduction}

Let $\BB$ be the open unit ball of $\CC^n$ and $\SS$   the boundary of $\BB$. When $n=1$, then $\BB$ is  the open unit disk in complex plane $\mathbb{C}$ and always denoted by $\D$. Let $H(\BB)$ denote the space of all holomorphic functions on $\BB$. For any two points
$$z=(z_1,z_2,\cdots,z_n)\,\mbox{ and } \, w=(w_1,w_2,\cdots,w_n)$$
in $\CC^n$, we define
$\langle z,w \rangle=z_1\overline{w_1}+\cdots+z_n\overline{w_n}$ and
$$|z|=\sqrt{\langle z,z \rangle}=\sqrt{|z_1|^2+\cdots+|z_n|^2}.$$

 Let $d\sigma$  and $dV$ be the normalized surface and volume measures on $\SS$ and $\BB$, respectively.
For $0<p\leq \infty$, the  Hardy space $H^p(\BB) $(or $H^p$) is the space consisting of all functions $f\in H(\BB)$ such that
$$\|f\|_{H^p}:=\sup_{0<r<1}M_p(r,f),$$
where
$$M_p(r,f)=\left(\int_{\SS}|f(r\xi)|^pd\sigma(\xi)\right)^\frac{1}{p}<\infty, \,\,\mbox{ when }\,\,0<p<\infty,$$
and
$$M_\infty(r,f)=\sup_{|z|=r}|f(z)|.$$

For $-1<\alpha<\infty$ and $0<p<\infty$, the weighted Bergman space $A_\alpha^p(\BB)$( or $A_\alpha^p$) consists of all $f\in H(\BB)$  such that
$$\|f\|_{A_\alpha^p}=  \int_B |f(z)|^pdV_\alpha(z)=c_\alpha\int_B |f(z)|^p(1-|z|^2)^\alpha dV(z)<\infty,$$
 where
$c_\a=\Gamma(n+\a+1)/(\Gamma(n+1)\Gamma(\a+1))$.  When $\alpha=0$, $A^p_0(\BB)=A^p(\BB)$ is the standard Bergman space. It is known that
 $f \in A^p_\a$ if and only if $\Re f(z) \in A_{\alpha+p}^p$. Moreover
  \begr
  \|f\|^p_{ A^p_\alpha}\asymp |f(0)|^p+ \int_\BB |\Re f(z)  |^p(1-|z|^2)^pdV_\alpha(z).   \nonumber
\endr
Here $\Re f $ is the radial derivative of $f,$ i.e., $\Re f(z)=\sum^n_{j=1}z_j \frac{\partial f}{\partial
z_j}(z).$    See \cite{Rw1980,Zk2005} for the theory of $H^p$ and $A_\alpha^p$ in the unit ball.

Suppose $\om$ is a radial weight ($i.e.$, a positive and  integrable  function in $\BB$ such that $\om(z)=\om(|z|)$).  Let $\hat{\om}(r)=\int_r^1\om(t)dt$. $\om$ is called a doubling weight, denoted by $\om\in \hD$, if there is a constant $C>0$ such that
$$\hat{\om}(r)<C\hat{\om}(\frac{1+r}{2}) ,\,\,\mbox{ when } 0\leq r<1.$$
  $\om$ is called a regular weight,  denote by  $\om\in    \R $, if there is a constant $C>0$ determined by   $\om$, such that
$$\frac{1}{C}<\frac{\hat{\om}(r)}{(1-r)\om(r)}<C,\,\,\mbox{ when } 0 \leq r<1.$$
  $\om$ is called a rapidly increasing weight, denote by  $\om\in\I$, if
$$\lim_{r\to 1} \frac{\hat{\om}(r)}{(1-r)\om(r)}=\infty.$$
After a calculation, we see that $\I\cup\R \subset\hD.$   See  \cite{Pja2015,PjaRj2014book} for more details about $\I,\R$,  $\hD$.

In \cite{PjaRj2014book}, J. Pel\'aez and J. R\"atty\"a introduced a new class function space $A_\om^p(\D)$, the weighted Bergman space induced by rapidly increasing weights in $\D$. In \cite{PjaRj2014book}, they investigated some basic properties of $\om$ with $\om\in\R\cup\I$,  described the $q-$Carleson measure for $A_\om^p(\D)$, gave equivalent characterizations of $A_\om^p(\D)$,  characterized   the boundedness, compactness and Schatten classes of  Volterra integral operator $J_g$ on
$A_\om^p(\D)$.  In \cite{Pja2015}, J.  Pel\'aez extended many results from $\om\in\R\cup\I$ to $\om\in\hD$.
See \cite{Pja2015,PjaRj2014book,PjaRj2015,PjaRj2016,PjRj2016jmpa,PjaRjSk2015mz,PjaRjSk2018jga} for many results on $A_\om^p(\D)$ with $\om\in\hD$.

Motivated by \cite{PjaRj2014book}, we extend the Bergman space $A_\om^p(\D)$  with $\om\in\hD$ to the unit ball.
 Let $\om\in\hD$ and $0<p<\infty$. The weighted Bergman space $A_\om^p=A_\om^p(\BB)$ is the space of all $f\in H(\BB)$ for which
$$\|f\|_{A_\om^p}^p=\int_{\BB}|f(z)|^p\om(z)dV(z)<\infty.$$
 It is easy to check that $A_\om^p$ is a Banach space when $p\geq 1$ and a complete metric space with the distance
$\rho(f,g)=\|f-g\|_{A_\om^p}^p$ when $0<p<1$.
When $\om(z)=c_\alpha(1-|z|^2)^\alpha(\alpha>-1)$, the space $A_\om^p$ becomes the classical weighted Bergman space $A_\alpha^p$.

Suppose that $g\in H(\D)$. The integral operator $J_g$,
called the Volterra integral operator, is defined  by
  \begr J_gf(z)=\int_0^z f(\xi)g'(\xi)d\xi , ~ \qquad~~ ~f\in H(\D), ~~  z\in \D. \nonumber
   \endr
 The operator $J_g$ was first introduced  by Pommerenke in \cite{Pc1977cmh}.  He showed that $J_g$ is a bounded operator on the
Hardy space $H^2(\D)$ if and only if $g\in BMOA(\D)$.  See \cite{AaCj2001jam, AaPja2012iumj, AaSa1995cv,AaSa1997iumj} for the study of the boundedness,  compactness and the spectrum of  $J_g$ in $H^p(\D)$ and $A_\alpha^p(\D)$.

Let $g\in H(\BB)$. Define
 $$T_gf(z)=\int_0^1 f(tz)\Re g(tz)\frac{dt}{t}, ~~\qquad~~~~f\in H(\BB), ~~z\in \BB.  $$
This operator is also called the Volterra type integral operator (or the Riemann-Stieltjes operator, or the Extended Ces\`aro
operator).   The operator $T_g$  was  introduced by Z. Hu in \cite{hu1} and studied, for example in \cite{hu1, hu3, lsbbmss, Pj2016jfa}.
In particular, J. Pau completely described the boundedness and compactness of $T_g$ between different Hardy spaces in the unit ball of $\CC^n$ in \cite{Pj2016jfa}.

In this paper, we will investigate some properties of $A_\om^p$  in the unit ball of $\CC^n$  and study the boundedness and compactness of  $T_g:A_\om^p\to A_\om^q$ with $\om\in\hD$ and $0<p\leq q<\infty$,
The paper is organized as follows. In section 2, we recall some well-known results and notations, define (standard) Carleson block $S_a$ for $a\in\BB$ and  estimate the volume of $S_a$.
In section 3, we characterize the $q-$Carleson measure for $A_\om^p$ with standard Carleson block $S_a$ for $a\in\BB$.
In section 4, we  extend the admissible approach region $\Gamma_u$ from $u\in\SS$ to $u\in\overline{\BB}$ by dilation transformation, get some equivalent characterizations for  $A_\om^p$ by using the radial derivative and admissible approach regions.
In section 5, we define a new class of holomorphic functions $\mathcal{C}^\kappa(\om^*)(\kappa\geq 1)$ and the little-oh subspace of it,
and then we investigate the boundedness and compactness of $T_g:A_\om^p\to A_\om^q$ with $0<p\leq q<\infty$ and $\om\in\hD$.
In section 6,  we discuss   the inclusion relationship between $\mathcal{C}^1(\om^*)(\mathcal{C}_0^1(\om^*))$ and some other function spaces,
such as the Bloch space $\B$, the little Bloch space $\B_0$, the $BMOA$ space and the $VMOA$ space.

Throughout this paper, the letter $C$ will denote  constants and may differ from one occurrence to the other.
The notation $A \lesssim B$ means that there is a positive constant C such that $A\leq CB$.
The notation $A \approx B$ means $A\lesssim B$ and $B\lesssim A$.\msk

 \section{ Preliminary results}

 For any $\xi,\tau\in\overline{\BB}$, let $d(\xi,\tau)=|1-\langle \xi,\tau\rangle|^\frac{1}{2}$.
 Then $d(\cdot,\cdot)$ is the nonisotropic metric.
For $r>0$   and $\xi\in\SS$, let
  $$Q(\xi,r)=\{\eta\in \SS:|1-<\xi,\eta>|\leq r^2\}.$$
 $Q(\xi,r)$  is a ball in $\SS$ for all $\xi\in \SS^n$ and $r\in(0,\sqrt{2})$.
More information about $d(\cdot,\cdot)$ and $Q(\xi,r)$ can be found in \cite{Rw1980,Zk2005}.
Lemma 4.6 in \cite{Zk2005} is very useful in this paper, and we express it as follows.

\begin{Lemma}\label{1212-1}
There exist positive constants $A_1$ and $A_2$ (depending on $n$ only) such that
$$A_1\leq \frac{\sigma(Q(\xi,r))}{r^{2n}}\leq A_2$$
for all $\xi\in\SS^n$ and $r\in(0,\sqrt{2})$.
\end{Lemma}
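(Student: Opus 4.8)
The plan is to reduce the computation of $\sigma(Q(\xi,r))$ to a two–dimensional integral by integrating in slices, and then to estimate that integral directly. By the rotation invariance of $\sigma$ it suffices to treat one fixed $\xi$, and the quantity $|1-\langle\xi,\eta\rangle|$ depends on $\eta$ only through the single complex variable $z=\langle\xi,\eta\rangle\in\overline{\DD}$ (note $|1-z|=|1-\bar z|$, so conventions do not matter). For $n\ge2$ the standard integration-in-slices formula (see \cite[Section 1.4]{Rw1980} and \cite{Zk2005}) gives, for every integrable $f$ on $\DD$,
$$\int_{\SS}f(\langle\xi,\eta\rangle)\,d\sigma(\eta)=\frac{n-1}{\pi}\int_{\DD}f(z)(1-|z|^2)^{n-2}\,dA(z),$$
where $dA$ is area measure on $\DD$; one checks the normalization is correct since the right side equals $1$ when $f\equiv1$. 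Taking $f$ to be the characteristic function of the lens $E_r=\{z\in\DD:|1-z|\le r^2\}$ yields
$$\sigma(Q(\xi,r))=\frac{n-1}{\pi}\int_{E_r}(1-|z|^2)^{n-2}\,dA(z).$$
Thus the whole problem becomes showing that this last integral is comparable to $r^{2n}$, uniformly for $0<r<\sqrt2$, with constants depending only on $n$; the case $n=1$ is degenerate here (the exponent $n-2$ is negative) and will be handled separately on the circle.

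The key observation is that $E_r$ is a tiny region hugging the boundary point $z=1$, exactly where the weight $(1-|z|^2)^{n-2}$ degenerates, so the weight must be tracked precisely. I would make the substitution $w=1-z$, under which $E_r$ becomes $\{w:|w|\le r^2,\ |1-w|\le1\}$ and $1-|z|^2=2\,\mathrm{Re}\,w-|w|^2$, a nonnegative quantity on this set. For the upper bound, on $\{|w|\le r^2\}$ one has $2\,\mathrm{Re}\,w-|w|^2\le2|w|\le2r^2$ while the region has area at most $\pi r^4$, so for $r\le1$ the integral is $\lesssim (r^2)^{n-2}\cdot r^4=r^{2n}$ (here $n\ge2$ is used for the monotonicity of $t\mapsto t^{n-2}$). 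For the matching lower bound when $r\le1$, I would restrict to the rectangle $\{u+iv:\tfrac{r^2}{4}\le u\le\tfrac{r^2}{2},\ |v|\le\tfrac{r^2}{4}\}$, which lies inside the transformed $E_r$ and has area $\tfrac18 r^4$, and on which $2\,\mathrm{Re}\,w-|w|^2\gtrsim r^2$; integrating gives $\gtrsim (r^2)^{n-2}\cdot r^4=r^{2n}$.

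It remains to cover the range $1\le r<\sqrt2$ and the case $n=1$. For $1\le r<\sqrt2$ the two-sided bound follows with no further computation: since $r\mapsto Q(\xi,r)$ is increasing, $\sigma(Q(\xi,r))$ lies between the fixed positive constant $\sigma(Q(\xi,1))$ and the total mass $\sigma(\SS)=1$, while $r^{2n}$ lies between $1$ and $2^n$, so the ratio is pinched between two positive constants. For $n=1$, $\SS$ is the unit circle and $Q(\xi,r)$ is the arc $\{e^{i\theta}:|1-e^{i\theta}|\le r^2\}$, whose normalized length is easily seen to be comparable to $r^2=r^{2n}$. The main obstacle in all of this is obtaining bounds uniform over the entire interval $0<r<\sqrt2$: a careless estimate of the weight near $z=1$ would produce the mere area exponent $r^4$ rather than the correct $r^{2n}$, and the substitution $w=1-z$ is precisely what exposes the extra factor $r^{2(n-2)}$ coming from the vanishing of $(1-|z|^2)^{n-2}$ on $E_r$.
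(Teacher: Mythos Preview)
Your proof is correct. The paper, however, does not prove this lemma at all: it merely quotes it as Lemma~4.6 of \cite{Zk2005} and uses it as a black box. So there is no proof in the paper to compare against---you have supplied what the paper only cites.

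For what it is worth, your argument is essentially the standard one (and is the route taken in \cite{Zk2005} itself): reduce to a one-variable integral via the slice-integration formula $\int_{\SS}f(\langle\xi,\eta\rangle)\,d\sigma(\eta)=\tfrac{n-1}{\pi}\int_{\DD}f(z)(1-|z|^2)^{n-2}\,dA(z)$, then estimate the weighted area of the lens $\{|1-z|\le r^2\}$ directly. Your treatment of the three regimes (small $r$ with $n\ge2$, large $r$ by monotonicity, and $n=1$ on the circle) is clean, and the rectangle you choose for the lower bound does lie in the transformed region with the weight $2\,\mathrm{Re}\,w-|w|^2$ bounded below by a multiple of $r^2$, as required. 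Nothing is missing.
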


For any $a\in\BB\backslash\{0\}$, let
$Q_a=Q({a}/{|a|},\sqrt{1-|a|}),$
and define
$$S_a=S(Q_{a})=\left\{z\in\BB:\frac{z}{|z|}\in Q_{a},|a|<|z|<1\right\}.  $$
For convince, if $a=0$, let $Q_a=\SS$ and $S_a=\BB$.  We call $S_a$ the Carleson block.
Now we give a estimate for the volume of $S_a$.
As usual, for a measurable set $E\subset\BB$, $\om(E)=\int_E \om(z)dV(z)$.\msk

\begin{Lemma}\label{1210-3}
Assume that $\om\in\hD$, $r\in[0,1]$ and $\om^*(r)=\int_r^1 s\om(s)\log\frac{s}{r}ds$. Then  the following statements hold.
\begin{enumerate}[(i)]
  \item $\om^*\in\R$ and $\om^*(r)\approx (1-r)\int_r^1 \om(t)dt$ when $r\in(\frac{1}{2},1)$;
  \item There are  $0<a<b<+\infty$ and $\delta\in [0,1)$, such that
  $$\frac{\om^*(r)}{(1-r)^a} \;\; \mbox{is decreasing on}\;\; [\delta,1)\;
\mbox{and}\;\; \lim_{r\to 1}\frac{\om^*(r)}{(1-r)^a}=0;$$
$$\frac{\om^*(r)}{(1-r)^b} \;\; \mbox{is increasing on}\;\; [\delta,1)\;\;
\mbox{and}\;\; \lim_{r\to 1}\frac{\om^*(r)}{(1-r)^b}=\infty; $$
\item $\om^*(r)$ is decreasing on $[\delta,1)$ and $\lim\limits_{r\to 1} \om^*(r)=0.$
\item  $\om(S_a)\approx (1-|a|)^n\int_{|a|}^1 \om(r)dr$.
\end{enumerate}
\end{Lemma}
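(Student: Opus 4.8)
The plan is to treat parts (i)--(iii) together through elementary estimates on the kernel $\log\frac{s}{r}$ combined with the doubling hypothesis, and to dispatch the purely geometric statement (iv) separately by polar integration.

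First I would establish the two-sided estimate in (i), namely $\omega^*(r)\approx(1-r)\hat\omega(r)$ for $r\in(\frac12,1)$. For the upper bound I use $\log\frac{s}{r}\le\frac{s-r}{r}\le\frac{1-r}{r}\le 2(1-r)$ when $\frac12\le r\le s\le1$, together with $s\le1$, which gives $\omega^*(r)\le 2(1-r)\hat\omega(r)$ directly. For the lower bound I restrict the integral defining $\omega^*$ to $s\in[\frac{1+r}{2},1]$, where $\log\frac{s}{r}\ge\log\frac{1+r}{2r}\gtrsim(1-r)$ while $s\ge\frac{1+r}{2}\ge\frac34$; this produces $\omega^*(r)\gtrsim(1-r)\hat\omega(\frac{1+r}{2})$, and the doubling property $\hat\omega(r)<C\hat\omega(\frac{1+r}{2})$ converts $\hat\omega(\frac{1+r}{2})$ back into $\hat\omega(r)$. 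Having $\omega^*\approx(1-r)\hat\omega$, the regularity $\omega^*\in\R$ follows by computing $\widehat{\omega^*}(r)=\int_r^1\omega^*(t)\,dt\approx\int_r^1(1-t)\hat\omega(t)\,dt$ and showing this is comparable to $(1-r)^2\hat\omega(r)\approx(1-r)\omega^*(r)$: the upper estimate uses monotonicity of $\hat\omega$, and the lower estimate again restricts to $[r,\frac{1+r}{2}]$ and invokes doubling. (It suffices, and is standard, to verify the defining relation of $\R$ for $r$ near $1$.)

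Next I would exploit the explicit derivative of $\omega^*$. Writing $\omega^*(r)=\int_r^1 s\omega(s)\log s\,ds-\log r\int_r^1 s\omega(s)\,ds$ and differentiating, the two boundary contributions cancel and one obtains the clean formula
$$(\omega^*)'(r)=-\frac1r\int_r^1 s\omega(s)\,ds<0,\qquad 0<r<1.$$
This immediately gives (iii): $\omega^*$ is strictly decreasing on all of $[0,1)$, and $\lim_{r\to1}\omega^*(r)=0$ follows from $\omega^*(r)\le 2(1-r)\hat\omega(r)$. For (ii) I examine $\frac{d}{dr}\log\frac{\omega^*(r)}{(1-r)^c}=-\frac{1}{r\omega^*(r)}\int_r^1 s\omega(s)\,ds+\frac{c}{1-r}$. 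By the estimate from (i), the quantity $\frac{1-r}{r\omega^*(r)}\int_r^1 s\omega(s)\,ds$ is pinned between two positive constants on $[\frac12,1)$, so choosing $c=a$ small (e.g. $a=\frac12$) makes the logarithmic derivative nonpositive and $\frac{\omega^*(r)}{(1-r)^a}$ decreasing, while choosing $c=b$ large makes it nonnegative and $\frac{\omega^*(r)}{(1-r)^b}$ increasing. The limit $\frac{\omega^*(r)}{(1-r)^a}\to0$ is clear since $\frac{\omega^*(r)}{(1-r)^a}\approx(1-r)^{1-a}\hat\omega(r)$ with $a<1$. The one genuinely delicate point is $\frac{\omega^*(r)}{(1-r)^b}\to\infty$: because $\hat\omega(r)\to0$ this is an indeterminate form, and I would resolve it by first proving a polynomial minoration $\hat\omega(r)\gtrsim(1-r)^{\log_2 C}$, obtained by iterating the doubling inequality along the dyadic points $1-r_k=2^{-k}(1-r_0)$; taking $b>1+\log_2 C$ then forces $\frac{\omega^*(r)}{(1-r)^b}\approx(1-r)^{1-b}\hat\omega(r)\to\infty$.

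Finally, for (iv) I would pass to polar coordinates $z=\rho\xi$, $\rho\in[0,1)$, $\xi\in\SS$, under which radiality of $\omega$ yields $\omega(S_a)=2n\,\sigma(Q_a)\int_{|a|}^1\omega(\rho)\rho^{2n-1}\,d\rho$. Lemma \ref{1212-1} applied with radius $\sqrt{1-|a|}$ gives $\sigma(Q_a)\approx(1-|a|)^n$, and since $\rho^{2n-1}\approx1$ on the relevant range one has $\int_{|a|}^1\omega(\rho)\rho^{2n-1}\,d\rho\approx\int_{|a|}^1\omega(\rho)\,d\rho$; combining yields $\omega(S_a)\approx(1-|a|)^n\int_{|a|}^1\omega(\rho)\,d\rho$. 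I expect the main obstacle to lie not in (iv), which is routine, but in the doubling-driven estimates: the lower bound $\omega^*\gtrsim(1-r)\hat\omega$ in (i) and the polynomial minoration of $\hat\omega$ needed for the divergence in (ii) are exactly the places where the hypothesis $\omega\in\hD$ is indispensable, and arranging the constants there is the crux of the argument.
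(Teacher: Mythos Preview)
Your argument for (iv) is essentially identical to the paper's: polar integration plus Lemma~\ref{1212-1}. For (i)--(iii) the paper gives no proof at all, merely citing \cite{PjaRj2014book,PjaRj2015}, so your self-contained treatment via the closed-form derivative $(\omega^*)'(r)=-\tfrac{1}{r}\int_r^1 s\omega(s)\,ds$ and the logarithmic derivative of $\omega^*(r)/(1-r)^c$ is more informative, and the argument is correct. Two small points worth tightening: the parenthetical ``e.g.\ $a=\tfrac12$'' in (ii) need not work, since the implicit constants in $\omega^*\approx(1-r)\hat\omega$ may push the admissible threshold for $a$ below $\tfrac12$ --- your actual argument only requires $a$ below the infimum of $(1-r)A(r)$ on $[\tfrac12,1)$, which is fine. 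And in (iv), ``$\rho^{2n-1}\approx1$ on the relevant range'' is only literally true when $|a|$ is bounded away from $0$; for small $|a|$ one needs one more appeal to doubling to obtain $\int_{|a|}^1\omega(\rho)\rho^{2n-1}\,d\rho\approx\hat\omega(|a|)$, though the paper glosses over this as well.
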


\begin{proof}$(i)-(iii)$ can be found in \cite{PjaRj2014book,PjaRj2015}. From \cite[Lemma 1.8]{Zk2005}, we see that
\begin{align}\label{0122-1}
\int_{\BB}f(z)dV(z)=2n\int_0^1r^{2n-1}dr\int_{\SS}f(r\xi)d\sigma(\xi).
\end{align}
Then by Lemma \ref{1212-1}, we have
\begin{align*}
\om(S_a)=2n\int_{|a|}^1 r^{2n-1}\om(r)dr\int_{Q(\frac{a}{|a|},\sqrt{1-|a|})}d\sigma(\xi)
\approx (1-|a|)^n\hat{\om}(|a|).
\end{align*}
The proof is complete.
\end{proof}

\begin{Lemma}\label{1206-1}
There exists $q=q(n)>1$, such that for all $r\in \left(0,\frac{\sqrt{2}}{q}\right)$ and $\xi\in \SS^n$,
\begin{align*}
\sigma\left({Q(\xi,qr)\backslash Q(\xi,r)}\right)\approx r^{2n}.
\end{align*}
\end{Lemma}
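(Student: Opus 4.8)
The plan is to reduce everything to Lemma \ref{1212-1}, which already pins down $\sigma(Q(\xi,r))$ by the two-sided bound $A_1 r^{2n}\le \sigma(Q(\xi,r))\le A_2 r^{2n}$. The first observation is that $Q(\xi,r)\subset Q(\xi,qr)$ whenever $q>1$, since $|1-\langle\xi,\eta\rangle|\le r^2$ forces $|1-\langle\xi,\eta\rangle|\le (qr)^2$. Consequently $Q(\xi,qr)\backslash Q(\xi,r)$ is an honest set difference of nested balls, so
$$\sigma\big(Q(\xi,qr)\backslash Q(\xi,r)\big)=\sigma(Q(\xi,qr))-\sigma(Q(\xi,r)).$$
The restriction $r\in(0,\sqrt{2}/q)$ is exactly what guarantees $qr\in(0,\sqrt{2})$, so that Lemma \ref{1212-1} applies to both $Q(\xi,r)$ and $Q(\xi,qr)$.

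For the upper estimate I would simply discard the subtracted term and bound
$$\sigma\big(Q(\xi,qr)\backslash Q(\xi,r)\big)\le \sigma(Q(\xi,qr))\le A_2 (qr)^{2n}=A_2 q^{2n} r^{2n},$$
which yields the desired $\lesssim r^{2n}$ with a constant depending only on $n$ (through $A_2$ and $q$).

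For the lower estimate, applying Lemma \ref{1212-1} to each term separately gives
$$\sigma(Q(\xi,qr))-\sigma(Q(\xi,r))\ge A_1 (qr)^{2n}-A_2 r^{2n}=\big(A_1 q^{2n}-A_2\big)\,r^{2n}.$$
The whole argument therefore hinges on choosing $q=q(n)>1$ large enough that the coefficient $A_1 q^{2n}-A_2$ is strictly positive; any $q$ with $q^{2n}>A_2/A_1$ works, for instance $q=(2A_2/A_1)^{1/(2n)}$. Since $A_2\ge A_1$ forces $(A_2/A_1)^{1/(2n)}\ge 1$, such a $q$ is automatically $>1$, as required. This is really the only delicate point: the constants $A_1,A_2$ from Lemma \ref{1212-1} are not claimed to be equal, so subtracting the two one-sided bounds is useful only after the dilation factor $q$ has absorbed their ratio. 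With $q$ fixed in this way the two estimates combine to give $\sigma(Q(\xi,qr)\backslash Q(\xi,r))\approx r^{2n}$ uniformly in $\xi\in\SS^n$ and $r\in(0,\sqrt{2}/q)$, which completes the proof.
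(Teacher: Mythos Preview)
Your proof is correct and follows essentially the same approach as the paper: both reduce everything to Lemma~\ref{1212-1}, choose $q$ so that $A_1 q^{2n}>A_2$, and read off the two-sided bound from $\sigma(Q(\xi,qr))-\sigma(Q(\xi,r))$. The only cosmetic difference is that the paper records the upper bound as $(A_2 q^{2n}-A_1)r^{2n}$ rather than your slightly simpler $A_2 q^{2n} r^{2n}$.
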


\begin{proof}
 By Lemma \ref{1212-1}, there exist $A_2>A_1>0$, such that
$$A_1 r^{2n}\leq \sigma(Q(\xi,r))\leq A_2  r^{2n},\,\,\mbox{ for all }\,\, r\in (0,\sqrt{2})\,\,\mbox{ and }\,\, \xi\in\SS^n.$$
Fix a $q>1$ such that $A_1 q^{2n}>A_2$. Then we have
$$
\sigma\left({Q(\xi,qr)\backslash Q(\xi,r)}\right)
\geq (A_1 q^{2n}-A_2)r^{2n}\gtrsim r^{2n},
$$
and
$$
\sigma\left({Q(\xi,qr)\backslash Q(\xi,r)}\right)
\leq  (A_2 q^{2n}-A_1)r^{2n}\lesssim r^{2n}.
$$
The proof is complete.
\end{proof}

For $q>0$, if  $\mu$ is a positive Borel measure on $\BB$, $L_\mu^q$ consists of the Lebesgue measurable   functions on $\BB$ such that
$$\|f\|_{L_\mu^q}:=\left(\int_\BB |f(z)|^qd\mu(z)\right)^\frac{1}{q}<\infty.$$
To study the compactness of a linear operator $T$ from $A_\om^p$ to $L_\mu^q$, we need the following  lemma which can be obtained in a standard way.
\begin{Lemma}\label{1210-2}
Suppose $0<p, q<\infty, \om\in \hD$ and $\mu$ is a positive Borel measure on $\BB$. If
$T:A_\om^p\to L_\mu^q$ is linear and bounded, then $T$ is compact if and only if
whenever $\{f_k\}$  is bounded in $A_\om^p$ and $f_k\to 0$ uniformly on compact subsets of $\BB$,
$\lim\limits_{k\to\infty}\|Tf_k\|_{L_\mu}^q =0.$
\end{Lemma}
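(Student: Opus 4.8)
The plan is to prove the two implications separately, the common engine being that norm-bounded subsets of $A_\om^p$ are normal families. First I would record the pointwise growth estimate that underlies everything: since $\om\in\hD$ is positive and integrable, evaluation at a point is a bounded functional on $A_\om^p$, so for each compact $K\subset\BB$ there is a constant $C_K$ with $\sup_{z\in K}|f(z)|\le C_K\|f\|_{A_\om^p}$ for every $f\in A_\om^p$. By Montel's theorem this makes every $A_\om^p$-bounded sequence a normal family, and Fatou's lemma shows that any uniform-on-compacts limit of such a sequence again lies in $A_\om^p$, with norm no larger than the limit inferior of the norms.

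For sufficiency I would assume the sequential criterion and take an arbitrary sequence $\{f_k\}$ with $\|f_k\|_{A_\om^p}\le 1$. The normal family property yields a subsequence $\{f_{k_j}\}$ converging uniformly on compact subsets to some $f\in A_\om^p$. Then $g_j:=f_{k_j}-f$ is bounded in $A_\om^p$ and tends to $0$ uniformly on compact subsets, so the hypothesis gives $\|Tg_j\|_{L_\mu^q}\to 0$, i.e. $Tf_{k_j}\to Tf$ in $L_\mu^q$. Since every bounded sequence thus has an image subsequence converging in $L_\mu^q$, the operator $T$ is compact.

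For necessity I would assume $T$ compact and take $\{f_k\}$ bounded in $A_\om^p$ with $f_k\to 0$ uniformly on compacts, arguing by contradiction: if $\|Tf_k\|_{L_\mu^q}\ge\delta>0$ along a subsequence, compactness lets me pass to a further subsequence with $Tf_{k_j}\to h$ in $L_\mu^q$, where $\|h\|_{L_\mu^q}\ge\delta$. The whole point is then to show $h=0$. For this I would verify that $f_{k_j}\to 0$ weakly in $A_\om^p$: in the integral pairing against a fixed functional I split $\BB$ into a large ball $\{|z|\le\rho\}$, on which the contribution vanishes in the limit by uniform convergence, and the complementary shell, whose contribution is uniformly small by H\"older's inequality together with the uniform bound on $\|f_{k_j}\|_{A_\om^p}$ and the integrability of $\om$ near the boundary. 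Since a compact operator carries weakly null sequences to norm null sequences, this forces $h=0$ and contradicts $\|h\|_{L_\mu^q}\ge\delta$; hence $\|Tf_k\|_{L_\mu^q}\to 0$.

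The step I expect to be the main obstacle is exactly this identification $h=0$ in the necessity part: one must upgrade convergence on compact subsets, which is a weak mode of convergence, into something the abstract bounded operator $T$ is forced to respect, and it is here that the structure of $A_\om^p$ genuinely enters, through the boundedness of point evaluations and the controlled tail behaviour furnished by $\om\in\hD$. The remaining ingredients---Montel's theorem, Fatou's lemma, and the ball/shell splitting---are routine, and when $0<p<1$ the same splitting can be used to reach the conclusion directly.
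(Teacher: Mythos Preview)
The paper gives no proof of this lemma beyond the remark that it ``can be obtained in a standard way,'' so there is nothing to compare your argument against; your outline is indeed the standard one, and the sufficiency direction is correct as written.

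For necessity your plan relies on the principle that a compact operator carries weakly null sequences to norm-null sequences. The usual proof of that principle passes to the adjoint and then uses that $(L_\mu^q)^*$ separates points of $L_\mu^q$ to identify the subsequential limit $h$ with $0$. When $0<q<1$ and $\mu$ is non-atomic, however, $(L_\mu^q)^*=\{0\}$, and the argument collapses: there are no functionals on the target through which to pin down $h$. Your closing sentence about the range $0<p<1$ addresses the \emph{domain} rather than the \emph{target} and does not explain how the ball/shell splitting alone forces $h=0$ for an abstract $T$ when the target has trivial dual.

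In the paper the lemma is only ever invoked for $T=Id$ and $T=T_g$, operators that are pointwise in nature, and for those one can check necessity directly without any weak-convergence machinery; so the gap is harmless for the applications. But as a proof of the lemma in the full generality stated---arbitrary bounded linear $T$ and all $0<p,q<\infty$---the necessity half is incomplete in the range $q<1$.
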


\begin{Lemma}\label{1201-1}
Suppose $\om\in\hD, 0<\alpha<\infty$. Then there exists a constant $C=C(\alpha,\om,n)$ such that
$$|f(z)|^\alpha\leq C M_\om(|f|^\alpha)(z),$$
for all $f\in H(\BB)$.  Here and henceforth,
$$M_\om(\vp)(z)=\sup_{z\in S_a}\frac{1}{\om(S_a)}\int_{S_a}|\vp(\xi)|\om(\xi)dV(\xi).$$
\end{Lemma}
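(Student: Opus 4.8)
The plan is to derive the pointwise bound from the sub-mean value property of $|f|^\alpha$ together with the volume estimate in Lemma \ref{1210-3}(iv). Since $f$ is holomorphic, $|f|^\alpha$ is plurisubharmonic, hence subharmonic, for every $\alpha>0$; this is the only analytic input. The model case is $z=0$, which already exhibits the mechanism: the spherical mean of $|f|^\alpha$ over $|w|=s$ dominates $|f(0)|^\alpha$ for every $s\in(0,1)$, so integrating in $s$ against $2ns^{2n-1}\om(s)$ and using \eqref{0122-1} gives $|f(0)|^\alpha\,\om(\BB)\le\int_\BB|f|^\alpha\om\,dV$; as $S_0=\BB$, this is exactly $|f(0)|^\alpha\le M_\om(|f|^\alpha)(0)$ with constant $1$. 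The essential point is that full spherical averages dominate the central value irrespective of how $\om$ is distributed radially.

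For $z\neq 0$ I would reproduce this picture around $z$. Writing $u=z/|z|$, I choose the on-ray block $S_a$ with $a=\lambda z$ and $1-|a|\approx 1-|z|$, so that $z\in S_a$ and a Bergman metric ball $D(z,r)$ of a fixed radius $r$ is contained in $S_a$; then $1-|w|\approx 1-|z|$ and $|1-\langle z,w\rangle|\approx 1-|z|$ throughout $D(z,r)$. The sub-mean value inequality over $D(z,r)$ yields $|f(z)|^\alpha\lesssim (1-|z|)^{-(n+1)}\int_{D(z,r)}|f|^\alpha\,dV$, and it remains to pass from this unweighted local average to the $\om$-weighted average over $S_a$. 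Here I would invoke Lemma \ref{1210-3}(iv), $\om(S_a)\approx (1-|a|)^n\hat{\om}(|a|)$, together with the regularity of $\hat{\om}$ forced by $\om\in\hD$, namely $\hat{\om}(t)\approx\hat{\om}(|a|)$ whenever $1-t\approx 1-|a|$ (obtained by iterating the doubling inequality exactly as in the proof of Lemma \ref{1210-3}), and then slice $S_a$ into the caps $\{t\xi:\xi\in Q_a\}$, $|a|<t<1$, comparing the mean of $|f|^\alpha$ over each cap with $|f(z)|^\alpha$.

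The main obstacle is precisely this last passage. Because $\om$ is merely doubling, it may be radially concentrated arbitrarily close to $\SS$, so that $\om$ nearly vanishes on the radii met by $D(z,r)$; one therefore cannot simply bound the weighted average below by the unweighted one on $D(z,r)$. The resolution is to use two features at once: first, $S_a$ reaches all the way to $\SS$, so that $\int_{S_a}|f|^\alpha\om$ sees the whole tail $\hat{\om}(|a|)$ appearing in $\om(S_a)$; second, subharmonicity propagates the size of $f$ at $z$ into the angular directions, so that for every $t\in(|z|,1)$ the mean of $|f|^\alpha$ over the cap $\{t\xi:\xi\in Q_a\}$ is $\gtrsim |f(z)|^\alpha$. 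Establishing this cap-mean lower bound uniformly in $t$—equivalently, that some admissible block always captures a definite proportion of $|f(z)|^\alpha$—is the crux, and I expect to obtain it by transplanting the $z=0$ spherical-mean argument through the automorphism $\varphi_z$ interchanging $0$ and $z$: it carries Euclidean spheres about the origin to Bergman spheres about $z$ and reduces the weighted estimate to a radial integration in which the doubling of $\hat{\om}$ supplies the required comparison. Combining the cap estimate with $\om(S_a)\approx(1-|a|)^n\hat{\om}(|a|)$ and integrating in $t$ against $\om$ then gives $|f(z)|^\alpha\lesssim \om(S_a)^{-1}\int_{S_a}|f|^\alpha\om\,dV\le M_\om(|f|^\alpha)(z)$, with a constant depending only on $\alpha$, $\om$ and $n$.
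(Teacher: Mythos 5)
Your $z=0$ computation and your diagnosis of the obstacle are both correct, but the step you call the crux --- that for every $t\in(|z|,1)$ the mean of $|f|^\alpha$ over the cap $\{t\xi:\xi\in Q_a\}$ is $\gtrsim|f(z)|^\alpha$ --- is false, and it is the load-bearing step. Subharmonicity bounds $|f(z)|^\alpha$ by the average of $|f|^\alpha$ against harmonic measure over the \emph{whole} sphere $\{|w|=t\}$; the cap carries only a fraction of that harmonic measure bounded away from $1$, and $f$ may be enormous on $\{|w|=t\}\setminus S_a$, a region your argument never sees, so no lower bound for the cap mean can follow. Concretely, already in $\D$: fix $r$ close to $1$ and $\varepsilon>0$, let $I=\{e^{i\theta}:|\theta|\le 10(1-r)\}$, let $u$ be the harmonic extension of the boundary data equal to $\log\varepsilon$ on $I$ and to $K$ on $\partial\D\setminus I$, where $K$ is chosen so that $u(r)=0$ (then $K\le C_1|\log\varepsilon|$ with $C_1$ absolute, since the harmonic measure of $I$ at $r$ is bounded away from $1$), and set $f=e^{u+i\widetilde{u}}\in H(\D)$, $\widetilde{u}$ the harmonic conjugate. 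Then $|f(r)|=1$, but for every $t$ with $1-t\le\eta(1-r)$ ($\eta$ small and absolute) each point of the cap at radius $t$ gives $\partial\D\setminus I$ harmonic measure $O(\eta)$, whence $u\le\tfrac12\log\varepsilon$ there; so the cap means at \emph{all} such radii are $\le\varepsilon^{\alpha/2}\ll 1=|f(r)|^\alpha$. This also sinks the single-block strategy as a whole, not just the cap claim: for the doubling weight $\om(s)=(1-s)^{-1}\bigl(\log\tfrac{e}{1-s}\bigr)^{-2}\in\hD$ one has $\hat{\om}(r)-\hat{\om}\bigl(1-\eta(1-r)\bigr)=o\bigl(\hat{\om}(r)\bigr)$ as $r\to1$, so almost all of $\om(S_a)$ sits at radii where $|f|\le\sqrt{\varepsilon}$, and since $|f|\le\varepsilon^{-C_1}$ globally, the weighted average of $|f|^\alpha$ over your one block $S_a$, $a=\lambda z$, is $\le\varepsilon^{-C_1\alpha}\,o(1)+\varepsilon^{\alpha/2}<1$ for $r$ near $1$. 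The lemma survives for this $f$ only because $M_\om$ is a supremum over blocks of \emph{all} scales: the larger blocks capture the set where $|f|\approx\varepsilon^{-C_1}$. The automorphism $\varphi_z$ cannot rescue the plan either: it carries spheres about $0$ to Bergman spheres about $z$, which stay inside a fixed Bergman ball and again only yield local unweighted averages, exactly the estimate you already had.

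The missing idea, and the backbone of the paper's proof, is multi-scale: one must use the whole nested family of blocks $S_{a_k}$ with $a_k=t_kz$, $1-t_k=q^k(1-|z|)$, $k=0,1,\dots,N+1$ (the last one being $\BB$), not a single block at the scale of $z$. The paper starts from the invariant Poisson-type inequality $|f(z)|^s\le\int_{\SS}\frac{(1-|z|^2/\rho^2)^{n}}{|1-\langle z/\rho,\xi\rangle|^{2n}}|f(\rho\xi)|^s\,d\sigma(\xi)$, uses H\"older's inequality to trade the kernel exponent $2n$ for $n\gamma$ with $\gamma>1+\beta/n$, where $\beta$ is the doubling exponent of $\hat{\om}$ from Lemma 2.1(ii) of \cite{Pja2015}, decomposes $\SS$ into the nonisotropic annuli $Q_{k+1}^\prime\setminus Q_k^\prime$, matches the $k$-th annulus with the block $S_{a_k}$, and only then integrates in $\rho$ against $\om$. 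The contribution of the distant annuli --- precisely the part your proposal tries to discard --- is not discarded but absorbed into the averages over the larger blocks with the geometric factor $q^{-(n\gamma-n-\beta)k}$; the H\"older step is calibrated exactly so that this factor beats the loss $q^{\beta k}$ coming from doubling. If you rebuild your argument around this decomposition, the remaining parts of your outline (the $z=0$ model, Lemma \ref{1210-3}(iv), and the easy case $|z|\le r_0$) do go through.
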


\begin{proof} Fix $q=q(n)$ such that Lemma \ref{1206-1} holds. Let $r_0=\max\left(\frac{1}{2},1-\frac{1}{q}\right)$.

For any $z\in\BB$ such that   $r_0<|z|<1$, if $\frac{1+|z|}{2}< \rho<1$, let $N$ be the largest nature number such that $q^N\left(1-|z|\right)<1$ and
$$Q_k^\prime:=\left\{\xi\in\SS:\left|1-\langle\xi,\frac{z}{|z|}\rangle\right|<q^k\left(1-\frac{|z|}{\rho}\right)\right\}, $$
for $k=0,1,2,\cdots,N$.
For convenience, let   $Q_{-1}^\p=\O$.
Then, $$Q_0^\prime\subset Q_1^\prime\subset \cdots\subset  Q_N^\prime\subset Q_{N+1}^\prime:=\SS.$$
When $\xi\in Q_{k+1}^\prime\backslash Q_k^\prime$ with $k=1,2,\cdots,N$, we have
\begin{align*}
\left|1-\langle\frac{1}{\rho}z,\xi\rangle\right|
&=\left|1-\langle\xi,\frac{z}{|z|}\rangle+\langle\xi,\frac{z}{|z|}\rangle-\langle \xi,\frac{1}{\rho}z\rangle\right|
\geq (q^k-1)\left(1-\frac{|z|}{\rho}\right).
\end{align*}
Then for all $\xi\in Q_{k+1}^\prime\backslash Q_{k}^\prime$ wiht $k=-1,0,1,\cdots, N$, we have
\begin{align*}
\left|1-\langle\frac{1}{\rho}z,\xi\rangle\right|
\gtrsim q^k \left(1-\frac{|z|}{\rho}\right).
\end{align*}

Since $\om\in\hD$, by Lemma 2.1(ii) in \cite{Pja2015}, there exist $C_0=C_0(\om)\geq 1$ and $\beta=\beta(\om)>0$, such that
\begin{align*}
\hat{\om}(r)\leq C_0\left(\frac{1-r}{1-t}\right)^\beta\hat{\om}(t),\,\mbox{ for all }\,0\leq r\leq t<1.
\end{align*}
Write $\alpha=s\gamma$, where $\gamma>1+\frac{\beta}{n}>1$.
Suppose $\frac{1}{\gamma}+\frac{1}{\gamma^\p}=1$.
Then
by Corollary 4.5 in \cite{Zk2005},  H\"older's inequality and Theorem 1.12 in \cite{Zk2005}, we have
\begin{align*}
|f(z)|^s
&\leq \int_{\SS} \frac{\left(1-\frac{1}{\rho^2}|z|^2\right)^n}{\left|1-\langle\frac{1}{\rho}z,\xi\rangle\right|^{2n}} |f(\rho\xi)|^s d\sigma(\xi)  \\
&\leq \left(\int_{\SS} \frac{\left(1-\frac{1}{\rho^2}|z|^2\right)^{n\gamma-n}}{\left|1-\langle\frac{1}{\rho}z,\xi\rangle\right|^{n\gamma}} |f(\rho\xi)|^{s\gamma} d\sigma(\xi) \right)^\frac{1}{\gamma}
\left(
\int_\SS \frac{\left(1-\frac{1}{\rho^2}|z|^2\right)^{n\gamma^\p-n}}{\left|1-\langle\frac{1}{\rho}z,\xi\rangle\right|^{n\gamma^\p}} d\sigma(\xi)
\right)^\frac{1}{\gamma^\p}
 \\
&\lesssim
\left(\int_{\SS} \frac{\left(1-\frac{1}{\rho^2}|z|^2\right)^{n\gamma-n}}{\left|1-\langle\frac{1}{\rho}z,\xi\rangle\right|^{n\gamma}} |f(\rho\xi)|^{s\gamma} d\sigma(\xi) \right)^\frac{1}{\gamma}.
\end{align*}
Therefore,
\begin{align*}
|f(z)|^\alpha
&\lesssim \int_{\SS} \frac{\left(1-\frac{1}{\rho^2}|z|^2\right)^{n\gamma-n}}{\left|1-\langle\frac{1}{\rho}z,\xi\rangle\right|^{n\gamma}} |f(\rho\xi)|^\alpha d\sigma(\xi)\\
&=\sum_{k=-1}^N\int_{Q_{k+1}^\prime\backslash Q_k^\prime} \frac{\left(1-\frac{1}{\rho^2}|z|^2\right)^{n\gamma-n}}{\left|1-\langle\frac{1}{\rho}z,\xi\rangle\right|^{n\gamma}}
|f(\rho\xi)|^\alpha d\sigma(\xi)   \\
&\lesssim \sum_{k=-1}^N\frac{1}{q^{n\gamma k}\left(1-\frac{|z|}{\rho}\right)^n}\int_{Q_{k+1}^\prime\backslash Q_k^\prime}|f(\rho\xi)|^\alpha d\sigma(\xi)\\
&\lesssim \frac{1}{(1-|z|)^n}\sum_{k=-1}^N\frac{1}{q^{n\gamma k}}\int_{Q_{k+1}^\prime}|f(\rho\xi)|^\alpha d\sigma(\xi).
\end{align*}
When $k=0,1,2,\cdots,N$, let $t_k=1-q^k(1-|z|)$ and $a_k=t_k z$.
When $k=N+1$, let $a_k=0$, $Q_{a_k}=\SS$ and $S_{a_k}=\BB$.
For $0\leq k\leq N+1$,   we have
 $$Q_k^\prime\subset Q_{a_k}\subset \SS,\,\,\,\,  1< \frac{1-|a_k|}{1-|z|}\lesssim q^k,\,\,\,\,
 \frac{1}{\hat{\om}(z)} \leq C_0\left(\frac{1-|a_k|}{1-|z|}\right)^\beta \frac{1}{\hat{\om}(a_k)}.$$
 For all $|z|\geq r_0$, $\int_{|z|}^1  r^{2n-1}\om(r)dr\approx \int_{\frac{|z|+1}{2}}^1 r^{2n-1}\om(r)dr$. So,  we have
\begin{align*}
|f(z)|^\alpha (1-|z|)^{n}\int_{|z|}^1  r^{2n-1}\om(r)dr
\approx &\int_{\frac{|z|+1}{2}}^1  \rho^{2n-1}\om(\rho)|f(z)|^\alpha (1-|z|)^{n}d\rho   \\
\lesssim & \int_{\frac{|z|+1}{2}}^1  \rho^{2n-1}\om(\rho)\left(
\sum_{k=0}^{N+1}\frac{1}{q^{n\gamma k}}\int_{Q_{k}^\prime}|f(\rho\xi)|^\alpha d\sigma(\xi)
\right)d\rho  \\
\lesssim &\sum_{k=0}^{N+1} \frac{1}{q^{n\gamma k}}\int_{S_{a_k}} |f(\zeta)|^\alpha \om(\zeta)dV(\zeta).
\end{align*}
By Lemma \ref{1210-3}, we have
\begin{align*}
\frac{1}{(1-|z|)^{n}\int_{|z|}^1  r^{2n-1}\om(r)dr}
\approx  \frac{1}{(1-|z|)^n\hat{\om}(z)}
\lesssim \frac{(1-|a_k|)^{n+\beta}}{(1-|z|)^{n+\beta} \om(S_{a_k})}
\lesssim \frac{q^{(n+\beta)k}}{\om(S_{a_k})}.
\end{align*}
Then,
\begin{align*}
|f(z)|^\alpha
&\lesssim \sum_{k=0}^{N+1} \frac{1}{q^{(n\gamma-n-\beta) k}}  \frac{\int_{S_{a_k}} |f(\xi)|^\alpha \om(\xi)dV(\xi)}{\om(S_{a_k})}
\lesssim M_\om(|f|^\alpha)(z).
\end{align*}

Next we  suppose  that $|z|\leq r_0$. For all $a\in\BB$ such that $z\in S_a$, we have $|a|<|z|\leq r_0$.
By Lemma \ref{1210-3}, $\om(S_a)\approx 1$. Then,
\begin{align*}
\sup_{z\in S_a}\frac{1}{\om(S_a)}\int_{S_a}|f(\xi)|^\alpha\om(\xi)dV(\xi)
&\approx \sup_{z\in S_a}\int_{S_a}|f(\xi)|^\alpha\om(\xi)dV(\xi)\\
& =\int_{\BB}|f(\xi)|^\alpha\om(\xi)dV(\xi).
\end{align*}
Using Cauchy's formula, we have
$$|f(z)|^\alpha
\lesssim \int_{\BB}|f(\xi)|^\alpha\om(\xi)dV(\xi)
\approx \sup_{z\in S_a}\frac{1}{\om(S_a)}\int_{S_a}|f(\xi)|^\alpha\om(\xi)dV(\xi).$$
The proof is complete.
\end{proof}

Here and henceforth, for all $a\in\BB$ and $0<p<\infty$, set
\begin{align}\label{0119-2}
F_{a,p}=\left(\frac{1-|a|^2}{1-\langle z,a\rangle}\right)^{\frac{\gamma+n}{p}}.
\end{align}

We obtain the following lemma.

\begin{Lemma}\label{1208-4} Suppose $\om\in\hD, 0<p<\infty$ and $\gamma$ is large enough.  For all $a\in\BB$,
\begin{align}\label{1208-5}
|F_{a,p}(z)|\approx 1, \,\,z\in S_a,
\end{align}
and
\begin{align*}
\|F_{a,p}\|_{A_\om^p}^p \approx \om(S_a).
\end{align*}
\end{Lemma}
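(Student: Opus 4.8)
The plan is to establish the two claims separately, each resting on the nonisotropic geometry of $S_a$ and the volume estimate $\om(S_a)\approx(1-|a|)^n\hat{\om}(|a|)$ from Lemma \ref{1210-3}(iv). For the pointwise estimate \eqref{1208-5}, the key observation is that for $z\in S_a$ the quantity $|1-\langle z,a\rangle|$ is comparable to $1-|a|$, so that $F_{a,p}(z)=\bigl((1-|a|^2)/(1-\langle z,a\rangle)\bigr)^{(\gamma+n)/p}$ has modulus comparable to $1$. First I would verify the lower bound $|1-\langle z,a\rangle|\gtrsim 1-|a|$: writing $z=s\eta$ with $\eta\in Q_a=Q(a/|a|,\sqrt{1-|a|})$ and $|a|<s<1$, one splits $1-\langle z,a\rangle=(1-|a|)+(|a|-s\langle\eta,a/|a|\rangle|a|)$ and uses $|1-\langle\eta,a/|a|\rangle|\le 1-|a|$ together with $|a|<s<1$ to control the cross term; the triangle inequality then gives both $|1-\langle z,a\rangle|\lesssim 1-|a|$ and $|1-\langle z,a\rangle|\gtrsim 1-|a|$. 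Since $1-|a|^2\approx 1-|a|$, the ratio inside $F_{a,p}$ is bounded above and below by positive constants on $S_a$, and raising to the fixed power $(\gamma+n)/p$ preserves this, yielding \eqref{1208-5}.

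For the norm estimate, the upper bound $\|F_{a,p}\|_{A_\om^p}^p\lesssim\om(S_a)$ is the substantive part and where I expect the main difficulty to lie. The strategy is to integrate the modulus of $F_{a,p}$ against $\om\,dV$ by slicing the integral according to the dyadic-type blocks $S_{2^k a}$ (or, more precisely, blocks associated to radii $1-2^k(1-|a|)$), exactly in the spirit of the decomposition used in the proof of Lemma \ref{1201-1}. On each annular-angular region the factor $|1-\langle z,a\rangle|^{-(\gamma+n)}$ is essentially constant and comparable to $\bigl(2^k(1-|a|)\bigr)^{-(\gamma+n)}$, while the $\om$-measure of the region is controlled by $\om(S_{2^k a})\approx(2^k(1-|a|))^n\hat{\om}$ of the corresponding radius. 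Summing the resulting geometric-type series requires $\gamma+n$ to dominate $n$ plus the exponent $\beta=\beta(\om)$ appearing in the doubling estimate $\hat{\om}(r)\le C_0((1-r)/(1-t))^\beta\hat{\om}(t)$ from \cite{Pja2015}; this is precisely why the hypothesis ``$\gamma$ is large enough'' is imposed, and pinning down that the series converges and telescopes back to $\om(S_a)\approx(1-|a|)^n\hat{\om}(|a|)$ is the delicate step.

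The lower bound is immediate and I would dispatch it first: by \eqref{1208-5} one has $|F_{a,p}(z)|^p\approx 1$ on $S_a$, hence
\begin{align*}
\|F_{a,p}\|_{A_\om^p}^p=\int_{\BB}|F_{a,p}(z)|^p\om(z)dV(z)\geq\int_{S_a}|F_{a,p}(z)|^p\om(z)dV(z)\approx\om(S_a).
\end{align*}
Combining this with the upper bound from the dyadic decomposition gives $\|F_{a,p}\|_{A_\om^p}^p\approx\om(S_a)$, completing the proof. The main obstacle, to reiterate, is organizing the angular–radial decomposition of $\BB$ around the point $a$ and verifying that the doubling exponent $\beta$ together with the largeness of $\gamma$ forces the summed estimate to collapse back to a single $\om(S_a)$ rather than accumulating an extra logarithmic or polynomial loss.
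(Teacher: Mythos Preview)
Your treatment of the pointwise estimate \eqref{1208-5} matches the paper's: both simply verify that $|1-\langle z,a\rangle|\approx 1-|a|$ on $S_a$ (the paper gets the lower bound more directly via $|1-\langle z,a\rangle|\ge 1-|z||a|\ge 1-|a|$, but the content is the same), and the lower bound $\|F_{a,p}\|_{A_\om^p}^p\gtrsim\om(S_a)$ via restriction to $S_a$ is identical.

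For the upper bound on $\|F_{a,p}\|_{A_\om^p}^p$, the paper takes a cleaner route than your proposed dyadic block decomposition. It passes to polar coordinates, applies Theorem~1.12 of \cite{Zk2005} to evaluate the spherical integral as $\int_\SS|1-\langle r\xi,a\rangle|^{-(\gamma+n)}d\sigma(\xi)\approx(1-r|a|)^{-\gamma}$, and then handles the remaining one-dimensional radial integral by splitting at $r=|a|$ and invoking Lemma~2.1(iii) of \cite{Pja2015}, which for large $\gamma$ gives $\int_0^{|a|}\om(r)(1-r)^{-\gamma}dr\lesssim\hat{\om}(|a|)(1-|a|)^{-\gamma}$. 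This reduces the whole upper bound to three lines by black-boxing both the angular and radial estimates. Your dyadic decomposition (slicing $\BB$ according to the level sets of $|1-\langle z,a\rangle|$, bounding the $\om$-mass of each shell, and summing a geometric series governed by the doubling exponent $\beta$) is a valid alternative and is essentially how Theorem~1.12 in \cite{Zk2005} is proved in the first place, but it unfolds what the paper keeps packaged; the trade-off is that your version is more self-contained while the paper's is shorter and makes the role of the condition on $\gamma$ explicit through the cited lemma.
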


\begin{proof}For all $z\in S_a$, we have
$$
\frac{1-|a|}{|1-\langle z,a\rangle|}\leq \frac{1-|a|}{1-|\langle z,a\rangle|}\leq \frac{1-|a|}{1-|a| |z|}\leq 1
$$
and
\begin{align*}
\frac{1-|a|}{|1-\langle z,a\rangle|}
&\geq \frac{1-|a|}{\left|1-\langle\frac{z}{|z|},\frac{a}{|a|} \rangle\right|  + |\langle z,a\rangle| \left|\frac{1}{|z| |a|}-1\right|}
\geq \frac{1-|a|}{1-|a|+1-|a|^2}\gtrsim 1.
\end{align*}
It follows that (\ref{1208-5}) holds.

By Lemmas \ref{1212-1} and \ref{1210-3}, we have
\begin{align}\label{1208-8}
\sigma(Q_a)\approx (1-|a|)^{n},
\,\,\mbox{ and }\,\,
 \om(S_a)\approx (1-|a|)^{n}  \int_{|a|}^1  \om(t)dt.
 \end{align}
 By Theorem 1.12 in \cite{Zk2005}  and Lemma 2.1(iii) in \cite{Pja2015},  if $\gamma $ is large enough, we have
 \begin{align*}
 \|F_{a,p}\|_{A_\om^p}^p
& = 2n (1-|a|)^{\gamma+n}\int_0^1\om(r)  r^{2n-1}\int_{\SS} \frac{1}{|1-\langle r\xi,a\rangle|^{\gamma+n}}d\sigma(\xi)     dr \\
& \approx 2n (1-|a|)^{\gamma+n}\int_0^1  \frac{r^{2n-1}\om(r)}{(1-r|a|)^{\gamma}} dr \\
& \leq 2n (1-|a|)^{\gamma+n}\left(\int_0^{|a|}  \frac{\om(r)}{(1-r)^{\gamma}} dr
 + \int_{|a|}^{1}  \frac{\om(r)}{(1-|a|)^{\gamma}} dr \right)\\
& \lesssim  (1-|a|)^{n}\int_{|a|}^1  \om(r) dr \approx \om(S_a).
 \end{align*}
By (\ref{1208-5}), $\|F_{a,p}\|_{A_\om^p}^p\gtrsim \om(S_a)$ is obvious. The proof is complete.
\end{proof}

In the rest  this paper, we always assume $F_{a,p}$ satisfies the condition of Lemma \ref{1208-4}.

In the last of this section, we define a $\alpha-$Carleson block $S_{a,\alpha}$ for all  $a\in\BB\backslash\{0\}$ and any fixed $\alpha>-1$. That is
\begin{align*}
S_{a,\alpha}=\left\{z\in\BB:|a|<|z|<1, \left|1-\langle\frac{z}{|z|},\frac{a}{|a|} \rangle\right|\leq (\alpha+1)(1-|a|)\right\}.
\end{align*}
When $a=0$, we define $S_{a,\alpha}=\BB$. Obviously, for all $a\in\BB$, we have $S_{a,0}=S_a$ and $S_a\subset S_{a,\alpha}(\alpha> 0 )$.
The following proposition is useful in this paper.

\begin{Proposition}\label{0118-1} For any fixed $\alpha\geq 0$, there exist $N\in\N$, such that, for all $a\in\BB$, there are $a_1,a_2,\cdots, a_k$ satisfy the following condition:
\begin{enumerate}[(i)]
  \item $k\leq N$ and $|a_1|=|a_2|=\cdots=|a_k|=|a|$;
  \item $S_{a,\alpha}\subset \cup_{i=1}^k S_{a_i}$.
\end{enumerate}
\end{Proposition}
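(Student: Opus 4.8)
The plan is to reduce the statement to a purely spherical covering problem and then solve it by a standard volume‑counting (Vitali‑type) argument based on Lemma \ref{1212-1}.

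First I would note that the two blocks share the same radial part. Fix $a\in\BB\setminus\{0\}$ (the case $a=0$ is trivial: take $k=1$, $a_1=0$), and write $\xi_0=a/|a|$, $R=\sqrt{1-|a|}$ and $\lambda=\sqrt{\alpha+1}$. By definition $S_{a,\alpha}=\{z:|a|<|z|<1,\ z/|z|\in Q(\xi_0,\lambda R)\}$, since the condition $|1-\langle z/|z|,a/|a|\rangle|\le(\alpha+1)(1-|a|)$ is exactly $z/|z|\in Q(\xi_0,\lambda R)$. On the other hand any candidate block with $|a_i|=|a|$ has the form $S_{a_i}=\{z:|a|<|z|<1,\ z/|z|\in Q(\xi_i,R)\}$ with $\xi_i=a_i/|a_i|$. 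Hence, once I produce unit vectors $\xi_1,\dots,\xi_k$ with $Q(\xi_0,\lambda R)\subset\bigcup_{i=1}^k Q(\xi_i,R)$ and set $a_i=|a|\xi_i$, condition (i) ($|a_i|=|a|$) holds automatically and the radial parts match, so $S_{a,\alpha}\subset\bigcup_{i=1}^k S_{a_i}$, giving (ii). Thus it suffices to cover the nonisotropic cap of radius $\lambda R$ by boundedly many caps of radius $R$.

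Next I would build the centers by maximal separation. Let $\{\xi_i\}_{i=1}^k$ be a maximal subset of $Q(\xi_0,\lambda R)$ that is $R$-separated for the metric $d$, i.e. $d(\xi_i,\xi_j)\ge R$ for $i\ne j$ (such a finite set exists by compactness of $\SS$). Maximality forces every $\eta\in Q(\xi_0,\lambda R)$ to satisfy $d(\eta,\xi_i)<R$ for some $i$, for otherwise $\eta$ could be adjoined; hence $Q(\xi_0,\lambda R)\subset\bigcup_i Q(\xi_i,R)$, which is precisely the covering required. To bound $k$ I would use disjointness together with Lemma \ref{1212-1}: the triangle inequality for $d$ shows the caps $Q(\xi_i,R/2)$ are pairwise disjoint and all contained in $Q(\xi_0,\lambda R+R/2)$. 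When $\lambda R+\tfrac R2<\sqrt2$, applying Lemma \ref{1212-1} at both scales gives $k\,A_1(R/2)^{2n}\le\sum_i\sigma(Q(\xi_i,R/2))\le\sigma(Q(\xi_0,\lambda R+\tfrac R2))\le A_2(\lambda R+\tfrac R2)^{2n}$, whence $k\le (A_2/A_1)(2\lambda+1)^{2n}$, a bound depending only on $\alpha$ and $n$. In the complementary regime $\lambda R+\tfrac R2\ge\sqrt2$ one has $R\ge\sqrt2/(\lambda+\tfrac12)$ bounded below, so the crude estimate $k\,A_1(R/2)^{2n}\le\sigma(\SS)=1$ again yields a bound in $\alpha,n$ only; taking $N$ to be the larger of the two completes the proof.

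I would flag two routine points as the only places demanding care. The first, which I regard as the main obstacle, is the uniformity of $N$ as $|a|\to1$: this is exactly why the \emph{localized} estimate $\sigma(Q(\xi_0,\lambda R+\tfrac R2))\lesssim(\lambda R+\tfrac R2)^{2n}$ must be used rather than the global $\sigma(\SS)=1$ (the latter would give a bound blowing up like $(1-|a|)^{-n}$), and why the edge case $\lambda R+\tfrac R2\ge\sqrt2$, where the cap is essentially all of $\SS$, is treated separately. The second is the disjointness of the half‑radius caps, which uses the genuine triangle inequality for $d$; if one prefers to regard $d$ only as a quasi‑metric, the separation radius is simply replaced by a fixed fraction of $R$ and the extra constant is absorbed into $N$.
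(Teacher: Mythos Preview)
Your proof is correct and follows essentially the same route as the paper's: both reduce the statement to covering the spherical cap $Q(a/|a|,\sqrt{(\alpha+1)(1-|a|)})$ by boundedly many caps of radius $\sqrt{1-|a|}$, choose a maximal separated family (equivalently, a maximal family of disjoint half-radius caps), obtain the covering from maximality, and bound the cardinality via the volume estimates of Lemma~\ref{1212-1}. Your treatment is in fact slightly more careful than the paper's in isolating the edge case $\lambda R+\tfrac{R}{2}\ge\sqrt2$ and in flagging the triangle-inequality/quasi-metric point.
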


\begin{proof} Suppose $a\in \BB\backslash\{0\}$ is fixed. For any $\tau\in \SS$, define
$$E_\tau=Q(\tau,\frac{1}{2}\sqrt{1-|a|}),\,\,
 \mbox{ and }\,\,
 E_a^\p=Q(\frac{a}{|a|},(\frac{1}{2}+\sqrt{1+\alpha})\sqrt{1-|a|}).$$

Since $\frac{\sigma(E_a^\p)}{\sigma(E_\tau)}<\infty$, there are at most $M:=M(a)$ elements $\xi_1,\xi_2,\cdots, \xi_{M}$ in $\SS$ such that
 \begin{enumerate}[(a)]
   \item $E_{\xi_i}\cap E_{\xi_j}=\O$ for all $1\leq i <j\leq M$;
   \item $E_{\xi_i}\subset E_a^\p$ for all $1\leq i \leq M$.
 \end{enumerate}
 Then we  have
 $$Q(\frac{a}{|a|},\sqrt{(1+\alpha)(1-|a|)})\subset\cup_{i=1}^M Q(\xi_i,\sqrt{1-|a|}).$$
 Otherwise, there is a $\xi\in Q(\frac{a}{|a|},\sqrt{(1+\alpha)(1-|a|)})$ but $\xi\not\in \cup_{i=1}^M Q(\xi_i,\sqrt{1-|a|})$.
 Then for any $\eta\in E_\xi$, we have
 $$d(\eta,\xi_i)\geq d(\xi_i,\xi)- d(\eta,\xi)> \sqrt{1-|a|}-\frac{1}{2}\sqrt{1-|a|}=\frac{1}{2}\sqrt{1-|a|},$$
 and
 $$d(\eta,\frac{a}{|a|})\leq d(\eta,\xi)+d(\xi,\frac{a}{|a|})<\frac{1}{2}\sqrt{1-|a|}+\sqrt{(1+\alpha)(1-|a|)}.  $$
 That is a contraction with $M$ is the maximum number. By Lemma 1, we have
 $$M\leq \frac{\sigma(E_a^\p)}{\sigma(E_\tau)}\lesssim 1.$$
 Then by letting $a_i=|a|\xi_i$, we finish the proof.
\end{proof}

\begin{Remark}\label{0308-1}
By Lemma \ref{1210-3}, for any fixed $\alpha>0$, $\om(S_a)\approx\om(S_{a,\alpha})$. Hence, many results described by Carleson block also hold for $\alpha$-Carleson block.
\end{Remark}

For $\xi\in\SS$ and $r>0$, a Carleson tube $S^*(\xi,r)$ can be define as
$$S^*(\xi,r)=\{z\in\BB:|1-\langle z,\xi\rangle|<r\}.$$
As we know, Carleson tube is very useful in the study of the function space on the unit ball of $\mathbb{C}^n$.
For  the convenience, we often restrict $0<r<\delta$ for some $\delta>0$. Here, we will compare Carleson tube with Carleson block.

\begin{Proposition}\label{0118-2}
The following assertions hold.
\begin{enumerate}[(i)]
  \item For any $\xi\in \SS$ and $0<r<1$, there exists $a\in\BB$ such that $S^*(\xi,r)\subset S_{a,2}$.
  \item For any $a\in \BB$ with $|a|>\frac{1}{2}$, there exist $\xi\in \SS$ and $0<r<1$ such that $ S_{a}\subset S^*(\xi,r)$.
\end{enumerate}
\end{Proposition}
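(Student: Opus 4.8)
The plan is to establish both inclusions by direct comparison of the nonisotropic metric $d(\cdot,\cdot)$ on the sphere with the radial extent of the blocks and tubes. For part (i), fix $\xi\in\SS$ and $0<r<1$, and set $a=(1-r)\xi$, so that $|a|=1-r$ and $a/|a|=\xi$. The key observation is that any $z\in S^*(\xi,r)$ satisfies $|1-\langle z,\xi\rangle|<r$, and I would use the elementary inequality $1-|z|\leq |1-\langle z,\xi\rangle|$ to conclude $|z|>1-r=|a|$, which is the radial requirement in the definition of $S_{a,\alpha}$. Then I must control the angular part $|1-\langle z/|z|,\xi\rangle|$. Writing $\langle z/|z|,\xi\rangle=\langle z,\xi\rangle/|z|$ and using the triangle-type estimate
\begin{align*}
\left|1-\langle \tfrac{z}{|z|},\xi\rangle\right|
\leq |1-\langle z,\xi\rangle| + |\langle z,\xi\rangle|\left|\tfrac{1}{|z|}-1\right|
\leq r + (1-|z|),
\end{align*}
together with $1-|z|<r$, gives a bound of the form $|1-\langle z/|z|,\xi\rangle|\leq 2r$. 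Comparing $2r$ with $(\alpha+1)(1-|a|)=3r$ (taking $\alpha=2$) shows $z\in S_{a,2}$, which is exactly the claimed inclusion.

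For part (ii), fix $a\in\BB$ with $|a|>\tfrac12$, and set $\xi=a/|a|$. The natural radius to take is $r\approx (1-|a|)$, perhaps $r=2(1-|a|)$, and I would verify $S_a\subset S^*(\xi,r)$ by bounding $|1-\langle z,\xi\rangle|$ from above for $z\in S_a$. For such $z$ one has $z/|z|\in Q_a=Q(\xi,\sqrt{1-|a|})$, i.e.\ $|1-\langle z/|z|,\xi\rangle|\leq 1-|a|$, and $|a|<|z|<1$. Running the triangle estimate in the reverse direction,
\begin{align*}
|1-\langle z,\xi\rangle|
\leq \left|1-\langle \tfrac{z}{|z|},\xi\rangle\right| + |\langle z,\xi\rangle|\,(1-|z|)
\leq (1-|a|) + (1-|a|) = 2(1-|a|),
\end{align*}
where I use $1-|z|<1-|a|$, yields $z\in S^*(\xi,2(1-|a|))$. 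This is the desired containment with $r=2(1-|a|)$, which lies in $(0,1)$ precisely because $|a|>\tfrac12$; this is where the hypothesis $|a|>\tfrac12$ is used, to guarantee $r<1$ so that the Carleson tube is a proper restricted tube.

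The only genuine subtlety, and the step I would watch most carefully, is handling the factor $|\langle z,\xi\rangle|$ in the cross terms and the relationship between the Euclidean gap $1-|z|$ and the metric quantities; one must be sure that $|\langle z,\xi\rangle|\leq 1$ and that the constants line up so that the target block $S_{a,2}$ (with $\alpha=2$) and the target radius are correct. These are the same kind of elementary estimates already used in the proof of Lemma \ref{1208-4}, so I would borrow that technique directly. Everything else is bookkeeping: checking the radial conditions $|z|>|a|$ and $r<1$, and confirming that the chosen $a$ and $\xi$ satisfy $a/|a|=\xi$. I expect no serious obstacle, only the need to choose the constants $\alpha=2$ and $r=2(1-|a|)$ so that the strict/non-strict inequalities in the definitions of $S^*$ and $S_{a,\alpha}$ are respected.
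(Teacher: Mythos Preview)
Your proposal is correct and follows essentially the same argument as the paper: the identical choices $a=(1-r)\xi$ in (i) and $\xi=a/|a|$, $r\approx 2(1-|a|)$ in (ii), with the same triangle-inequality comparison between $1-\langle z,\xi\rangle$ and $1-\langle z/|z|,\xi\rangle$. The only cosmetic differences are that the paper takes any $r\in(2(1-|a|),1)$ in (ii) rather than $r=2(1-|a|)$ exactly, and that your cross term in (ii) should read $|\langle z/|z|,\xi\rangle|\,(1-|z|)$ (which is then bounded by $1-|z|<1-|a|$).
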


\begin{proof}
(i).  For any $z\in S^*(\xi,r)$ with $0<r<1$, by letting $a=(1-r)\xi$, we have $|z|>|a|$ and
\begin{align*}
\left|1-\langle \frac{z}{|z|},\xi \rangle\right|
&\leq \left|1-\langle z,\xi \rangle\right| +\left|\langle z,\xi \rangle-\langle \frac{z}{|z|},\xi \rangle\right| \leq r+1-|z|<2(1-|a|).
\end{align*}
Then $S^*(\xi,r)\subset S_{a,2}$.

(ii).  Suppose $a\neq 0$. Let $\xi=\frac{a}{|a|}$ and $2(1-|a|)<r<1$.   For any $z\in S_a$, we have
\begin{align*}
\left|1-\langle z,\xi\rangle\right|
&\leq \left|1-\langle \frac{z}{|z|},\frac{a}{|a|}\rangle\right| + \left|\langle \frac{z}{|z|},\frac{a}{|a|}\rangle-\langle z,\xi\rangle\rangle\right|    \\
&\leq 1-|a| +1-|z|\leq 2(1-|a|)<r.
\end{align*}
Then  $ S_{a}\subset S^*(\xi,r)$.
The proof is complete.
\end{proof}\msk

 \section{The $q$-Carleson Measure for $A_\om^p$}
In this section, we  give some descriptions of $q$-Carleson measure for $A_\om^p$ when $0<p\leq q<\infty$. For a given Banach space (or a complete metric space) $X$ of analytic functions on $\BB$, a positive Borel measure $\mu$ on $\BB$ is called a $q$-Carleson measure for $X$ if the identity operator $Id : X \to L^q_\mu$ is bounded.
Moveover, if  $Id : X \to L^q_\mu$ is compact, then we say that $\mu$ is a vanishing $q$-Carleson measure for $X$. \msk

\begin{Theorem}\label{0109-7}
Let $0<p\leq q<\infty$, $\om\in \hD$, and $\mu$ be a positive Borel measure on $\BB$. The the following statements hold:
\begin{enumerate}[(i)]
  \item  $\mu$ is a $q$-Carleson measure for $A_\om^p$ if and only if
  \begin{align}\label{0109-8}
  \sup_{a\in\BB} \frac{\mu(S_a)}{(\om(S_a))^{\frac{q}{p}}}<\infty.
  \end{align}
  Moreover, if $\mu$  is a $q$-Carleson measure for $A_\om^p$, then the identity operator $Id:A_\om^p\to L_\mu^q$ satisfies
  $$\|Id\|_{A_\om^p\to L_\mu^q}^q \approx \sup_{a\in\BB} \frac{\mu(S_a)}{(\om(S_a))^{\frac{q}{p}}}.$$
  \item  $\mu$ is a vanishing $q$-Carleson measure for $A_\om^p$  if and only if
  $$\lim_{|a|\to 1}   \frac{\mu(S_a)}{(\om(S_a))^{\frac{q}{p}}}=0.$$
\end{enumerate}
\end{Theorem}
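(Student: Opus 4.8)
The plan is to prove (i) by testing on extremal functions for necessity and by a maximal-function/Carleson embedding argument for sufficiency, and then to deduce (ii) from (i) together with the compactness criterion of Lemma \ref{1210-2}.

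For the necessity in (i) I would test the embedding on the functions $F_{a,p}$ of Lemma \ref{1208-4}. Since $|F_{a,p}|\approx 1$ on $S_a$ and $\|F_{a,p}\|_{A_\om^p}^p\approx\om(S_a)$, boundedness of $Id:A_\om^p\to L_\mu^q$ yields
$$\mu(S_a)\lesssim\int_\BB|F_{a,p}|^q\,d\mu\le\|Id\|^q\,\|F_{a,p}\|_{A_\om^p}^q\approx\|Id\|^q\,\om(S_a)^{q/p},$$
and taking the supremum over $a$ gives (\ref{0109-8}) together with $\sup_a\mu(S_a)/\om(S_a)^{q/p}\lesssim\|Id\|^q$. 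For the sufficiency, set $M:=\sup_a\mu(S_a)/\om(S_a)^{q/p}<\infty$ and $s:=q/p\ge 1$. Applying Lemma \ref{1201-1} with $\alpha=p$ gives the pointwise bound $|f(z)|^p\lesssim M_\om(|f|^p)(z)$, so that $\int_\BB|f|^q\,d\mu\lesssim\int_\BB\big(M_\om(|f|^p)\big)^{s}\,d\mu$, and the whole matter reduces to the weighted maximal inequality
$$\int_\BB(M_\om\vp)^{s}\,d\mu\lesssim M\Big(\int_\BB\vp\,\om\,dV\Big)^{s},\qquad\vp\ge 0.$$
I would establish this by a stopping-time (Calder\'on--Zygmund) decomposition adapted to the Carleson blocks: at each height $K^{j}$ select the maximal blocks on which the $\om$-average of $\vp$ exceeds $K^{j}$; for $K$ large these stopping blocks form a sparse, Carleson-packed family, on the corresponding level sets one has $M_\om\vp\approx K^{j}$, and one bounds $\mu$ of each selected block $S$ by $M\,\om(S)^{s}$, summing with the elementary inequality $\sum_k b_k^{s}\le(\sum_k b_k)^{s}$ (valid for $s\ge 1$) and the packing.

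The hard part is precisely this embedding. Because $\om\in\hD$ is only \emph{doubling}, $\om$ need not be comparable across a hyperbolic ball, so the usual localized sub-mean value estimate (average over a Whitney box) fails, and one is forced to run the argument through the global maximal operator $M_\om$ furnished by Lemma \ref{1201-1} and the ``upper triangle'' ($p\le q$) Carleson embedding, rather than through a cleaner one-block-at-a-time lattice estimate. Combined with the necessity bound this also delivers the norm equivalence $\|Id\|^q\approx\sup_a\mu(S_a)/\om(S_a)^{q/p}$.

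For (ii), necessity follows by normalizing, $f_a=F_{a,p}/\|F_{a,p}\|_{A_\om^p}$: then $\|f_a\|_{A_\om^p}\approx 1$ and $f_a\to 0$ uniformly on compact sets as $|a|\to 1$ (on a fixed compactum $|F_{a,p}|\lesssim(1-|a|)^{(\gamma+n)/p}$ while $\|F_{a,p}\|_{A_\om^p}\gtrsim\om(S_a)^{1/p}$), so Lemma \ref{1210-2} forces $\|f_a\|_{L_\mu^q}\to 0$; since $\|f_a\|_{L_\mu^q}^q\gtrsim\mu(S_a)/\om(S_a)^{q/p}$ as in the first paragraph, the limit condition results. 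For sufficiency the vanishing condition makes $\mu$ a $q$-Carleson measure by (i), so $Id$ is bounded, and I verify compactness via Lemma \ref{1210-2}: taking $\{f_k\}$ bounded in $A_\om^p$ with $f_k\to 0$ uniformly on compacta, I split $\int_\BB|f_k|^q\,d\mu$ over $\{|z|\le R\}$ and $\{|z|>R\}$. The inner part tends to $0$ by uniform convergence, as $\mu$ is finite on compact subsets. For the outer part, covering each truncated tent $S_a\cap\{|z|>R\}$ by boundary blocks (as in Proposition \ref{0118-1} and Lemma \ref{1210-3}, and Remark \ref{0308-1}) shows that $\mu|_{\{|z|>R\}}$ has Carleson constant $\lesssim\sup_{|a|\ge R}\mu(S_a)/\om(S_a)^{q/p}$, which is small for $R$ near $1$; applying the sufficiency estimate of the second paragraph to this restricted measure bounds the outer integral by $\e$. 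Letting $R\to 1$ and then $\e\to 0$ finishes the proof.
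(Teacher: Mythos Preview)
Your necessity argument in (i) and your treatment of (ii) match the paper's proof essentially line for line: the same test functions $F_{a,p}$, the same compactness criterion, and the same outer/inner splitting with the covering of $S_a\cap\{|z|>R\}$ by boundary blocks (which is exactly the paper's estimate (\ref{0122-2})).

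The sufficiency argument in (i), however, has a genuine gap. By taking $\alpha=p$ in Lemma~\ref{1201-1} you reduce to the maximal inequality
\[
\int_\BB (M_\om\vp)^{s}\,d\mu\lesssim M\Big(\int_\BB\vp\,\om\,dV\Big)^{s},\qquad s=\tfrac{q}{p}\ge 1,
\]
applied to $\vp=|f|^p\in L^1_\om$. When $s=1$ (i.e.\ $p=q$) this is a \emph{strong} $L^1_\om\to L^1_\mu$ bound for $M_\om$, and that is simply false: take $d\mu=\om\,dV$, for which $M=1$, and you are asking that the Hardy--Littlewood--type maximal operator be bounded on $L^1$. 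No stopping-time or Calder\'on--Zygmund decomposition will deliver this; such arguments give weak $(1,1)$ and strong $(r,r)$ for $r>1$, not strong $(1,1)$. The paper avoids this by choosing $\alpha$ with $p\alpha>1$ (not $\alpha=p$), proving first the \emph{weak-type} estimate $\mu\bigl(\{M_\om\vp>s\}\bigr)\le KMs^{-q/p}\|\vp\|_{L^1_\om}^{q/p}$ via a Vitali-type covering (Lemma~5.6 in \cite{Zk2005}) of the level sets, and then upgrading to strong type by the standard truncation $|f|^{1/\alpha}=\psi_{1/\alpha,s}+\chi_{1/\alpha,s}$ together with Minkowski's inequality (Fubini when $p=q$); the condition $p\alpha>1$ is precisely what makes the bootstrap converge.

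A secondary issue: even for $s>1$, your stopping-time sketch presupposes a nested (dyadic) family of Carleson blocks in $\BB$ on which to run ``select the maximal blocks''. The family $\{S_a:a\in\BB\}$ has no such tree structure, so ``maximal'' is not well defined as stated. The paper's covering argument sidesteps this entirely: it shows that any finite disjoint subfamily of the $Q_\xi$ arising from a level set has $\sum\om(S_{\xi})\le s^{-1}\|\vp\|_{L^1_\om}$, and then uses the $5r$-covering lemma on $\SS$ to pass to a disjoint subfamily whose dilates cover. If you want a genuinely different route, you would first need to build a dyadic grid of Carleson-type blocks in $\BB$ (as is sometimes done via adjacent dyadic systems on $\SS$) and then run your Calder\'on--Zygmund argument there; but that is substantial extra work you have not indicated.
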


\begin{proof} First assume that $\mu$ is a $q$-Carleson measure for $A_\om^p$,  By Lemma \ref{1208-4}, we have
\begin{align*}
\mu(S_a)
&\approx \int_{S_a} |F_{a,p}|^q d\mu(z)
\leq \|F_{a,p}\|_{L_\mu^q}^q   \\
&\leq \|Id\|_{A_\om^p\to L_\mu^q}^q \|F_{a,p}\|_{A_\om^p}^q
\approx \|Id\|_{A_\om^p\to L_\mu^q}^q (\om(S_a))^{\frac{q}{p}}.
\end{align*}
So,
$$\sup_{a\in\BB} \frac{\mu(S_a)}{ (\om(S_a))^{\frac{q}{p}}}\lesssim \|Id\|_{A_\om^p\to L_\mu^q}^q.$$

Conversely, suppose $M:=\sup\limits_{a\in\BB} \frac{\mu(S_a)}{(\om(S_a))^{\frac{q}{p}}}<\infty.$ We begin with proving that there exists a constant $K=K(p,q,\om)$ sucht that
\begin{align}\label{1208-7}
\mu(E_s)\leq KMs^{-\frac{q}{p}} \|\vp\|_{L_\om^1}^{\frac{q}{p}},
\end{align}
is valid for all $\vp\in L_\om^1$ and $0<s<\infty.$ Here $ E_s=\{z\in \BB: M_\om(\vp)(z)>s\}$.

If $E_s=\O$, (\ref{1208-7}) holds. If $E_s\neq \O$, define $A_s^\varepsilon$ and $B_s^\varepsilon$ for each $\varepsilon>0$ as follows.
$$A_s^\varepsilon=\left\{z\in\BB: \int_{S_z} |\vp(\xi)|\om(\xi)dV(\xi)>s(\varepsilon+\om(S_z))\right\},$$
and
$$B_s^\epsilon=\left\{z\in\BB: S_z\subset S_u \mbox{ for some }  u\in A_s^\varepsilon\right\}.$$
Then we have
\begin{align*}
E_s= \cup_{\varepsilon>0} B_s^\varepsilon, \mbox{ and } \mu(E_s)=\lim_{\varepsilon\to 0^+} \mu(B_s^\varepsilon).
\end{align*}
Let $E\subset A_s^\varepsilon$ such that for all $\xi,\eta\in E$ we have either $\xi  =\eta $ or $Q_{\xi}\cap Q_{\eta}=\O$. Since
\begin{align}\label{0109-2}
s\sum_{\xi\in E}(\varepsilon +\om(S_\xi))\leq \sup_{\xi\in E}\int_{S_\xi}|\vp(z)|\om(z)dV(z)\leq \|\vp\|_{L_\om^1},
\end{align}
we get that there are only finite elements in $E$. By Lemma 5.6 in \cite{Zk2005}, there are   $\{z_1,z_2,\cdots,z_m\}\subset A_s^\varepsilon$ such that
$Q_{z_j}(1\leq j\leq m)$ are disjoint and
\begin{align}\label{0109-1}
A_s^\varepsilon\subset \cup_{k=1}^m\left\{z\in\BB: Q_z\subset Q_{z_k}^\prime\right\},
\end{align}
 where
$$Q_{z_k}^\prime=Q\left(\frac{z_k}{|z_k|},5\sqrt{1-|z_k|}\right).$$
For any $z\in B_s^\varepsilon$, there is a $u\in A_s^\varepsilon$ such that $S_z\subset S_u$. So, $Q_z\subset Q_u$.
By (\ref{0109-1}), we have
\begin{align}\label{0109-3}
B_s^\varepsilon\subset \cup_{k=1}^m\left\{z\in\BB: Q_z\subset Q_{z_k}^\prime\right\}.
\end{align}
Let $r_k=1-25(1-|z_k|)$. If $r_k>0$, let $z_k^\p=\frac{r_kz_k}{|z_k|}$, and otherwise, let $z_k^\p=0$. Then we have $Q_{z_k}^\p\subset Q_{z_k^\p}$.
Therefore,
\begin{align*}
\mu\left(\left\{z\in\BB: Q_z\subset Q_{z_k}^\prime\right\}\right)
&\leq \mu \left(\left\{z\in\BB: Q_z\subset Q_{z_k^\p}\right\}\right)\\
&=\mu \left(\left\{z\in\BB: S_z\subset S_{z_k^\p}\right\}\right) \\
&\leq \mu \left( S_{z_k^\prime}\right)
\leq M \left(\om(S_{z_k^\prime})\right)^{\frac{q}{p}}
 \approx M  \left(\om(S_{z_k})\right)^{\frac{q}{p}}.
\end{align*}
Here, the last equivalent relation can be get by Lemma \ref{1210-3} and $\om\in\hD$.
Then, by (\ref{0109-2}) and (\ref{0109-3}), we have
\begin{align*}
\mu(B_s^\varepsilon)
&\leq \sum_{k=1}^m \mu\left(\left\{z\in\BB: Q_z\subset Q_{z_k}^\prime\right\}\right)  \\
&\lesssim M \left(\sum_{k=1}^m \om(S_{z_k})\right)^\frac{q}{p}
\lesssim M s^{-\frac{q}{p}}\|\vp\|_{L_\om^1}^\frac{q}{p}.
\end{align*}
Let $\varepsilon\to 0^+$, we have $K=K(p,q,\om)$ such that
$$\mu(E_s)\leq KM s^{-\frac{q}{p}}\|\vp\|_{L_\om^1}^\frac{q}{p}.$$
Then we obtain (\ref{1208-7}).

Next, we will show that $\mu$ is a $q-$Carleson measure for $A_\om^p$.
The  proof is similar to the proof of \cite[Theorem 2.1]{PjaRj2014book}, but for the benefits of the readers and the completeness of the paper, we give the details of the proof. 

Fix $\alpha>\frac{1}{p}$ and let $f\in A_\om^p$. For $s>0$, let  $$|f|^{\frac{1}{\alpha}}=\psi_{\frac{1}{\alpha},s}+\chi_{\frac{1}{\alpha},s},$$
where
$$
\psi_{\frac{1}{\alpha},s}(z)=\left\{
\begin{array}{cc}
  |f(z)|^\frac{1}{\alpha},  & \mbox{ if } |f(z)|^\frac{1}{\alpha}>\frac{s}{2K} \\
  0, &  \mbox{ otherwise }
\end{array}
\right.
$$
and $K$ is the constant in (\ref{1208-7}) such that $K\geq 1$.
Since $p>\frac{1}{\alpha}$, the function $\psi_{\frac{1}{\alpha},s}$ belongs to $L_\om^1$ for all $s>0$,  and
$$M_\om(|f|^\frac{1}{\alpha})\leq M_\om(\psi_{\frac{1}{\alpha},s})+M_\om(\chi_{\frac{1}{\alpha},s})\leq M_\om(\psi_{\frac{1}{\alpha},s})+\frac{s}{2K}.$$
Then,
\begin{align}\label{1210-1}
\left\{
z\in\BB:M_\om(|f|^\frac{1}{\alpha})(z)>s
\right\}
\subset
\left\{
z\in\BB:M_\om(\psi_{\frac{1}{\alpha},s})(z)>\frac{s}{2}
\right\}.
\end{align}
Using Lemma \ref{1201-1}, (\ref{1210-1}), (\ref{1208-7}) and Minkowski's inequality (Fubini's Theorem in the case $p=q$) in order,
  we have
\begin{align}
\int_{\BB}|f(z)|^q d\mu(z)
&\lesssim \int_{\BB} \left(M_\om(|f|^\frac{1}{\alpha})(z)\right) ^{q\alpha}d\mu(z)
\lesssim M\left(\int_{\BB} |f(z)|^p\om(z)dV(z)\right)^\frac{q}{p}.  \label{0109-5}
\end{align}
So, we get $\|Id\|_{A_\om^p\to A_\om^q}^q \lesssim M$. 
The proof of  $(i)$ is complete.

Next we prove $(ii)$. First we suppose that $\mu$ is a vanishing $q$-Carleson measure for $A_\om^p$. Let
$$f_{a,p}(z)=\left(\frac{1-|a|^2}{1-\langle z,a\rangle}\right)^{\frac{\gamma+n}{p}}
\frac{1}{\left(\om(S_a)\right)^\frac{1}{p}},
 $$
for some $\gamma$ is large enough. By Lemmas \ref{1210-3} and  \ref{1208-4} $f_{a,p}$ is bounded in $A_\om^p$ and converges to 0 uniformly on compact subset of $\BB$ as $|a|\to 1$. By Lemma \ref{1210-2}, we have $\lim\limits_{|a|\to 1}\|f_{a,p}\|_{L_\mu^q}=0.$ Since
\begin{align*}
\|f_{a,p}\|_{L_\mu^q}^q
\geq \int_{S_a} |f(z)|^q d\mu(z)\approx \frac{\mu(S_a)}{(\om(S_a))^\frac{q}{p}},
\end{align*}
we have $\lim\limits_{|a|\to 1} \frac{\mu(S_a)}{(\om(S_a))^\frac{q}{p}}=0.$

Conversely we suppose that $\lim\limits_{|a|\to 1} \frac{\mu(S_a)}{(\om(S_a))^\frac{q}{p}}=0.$ For all $\varepsilon>0$, there exists $r=r(\varepsilon)\in(0,1)$ such that when $|a|>r$, $\frac{\mu(S_a)}{(\om(S_a))^\frac{q}{p}}<\varepsilon$. Let $d\mu_r(z)=\chi_{r\leq |z|<1}d\mu(z)$.

If $|a|\geq r$, $\mu_r(S_a)=\mu(S_a)$. Then suppose $0< |a|<r$. Since $\sigma(\SS)<\infty$ and $$\sigma(Q(\xi,\frac{\sqrt{1-r}}{2}))\approx (1-r)^n>0,$$
by the proof of Proposition \ref{0118-1}, for all $\xi\in \SS$, there are at most $N$ elements $\xi_1,\xi_2,\cdots,\xi_N$ in $Q_a$ such that,
$$Q_a\subset\cup_{i=1}^N Q(\xi_i,\sqrt{1-r}),$$
and
$$N\lesssim  \left(\frac{\sqrt{1-|a|}+\frac{\sqrt{1-r}}{2}}{\frac{\sqrt{1-r}}{2}}\right)^{2n}\approx \left(\frac{1-|a|}{1-r}\right)^n.$$
Therefore,
$$
E_a:=\left\{z\in S_a:r<|z|<1\right\}
\subset \cup_{k=1}^N \left\{z\in\BB:r<|z|<1,\frac{z}{|z|}\in Q(\xi_i,\sqrt{1-r})\right\}.
$$
Since
$$\left\{z\in\BB:r<|z|<1,\frac{z}{|z|}\in Q(\xi_i,\sqrt{1-r})\right\}=S_{r\xi_i},$$
 by Lemma \ref{1210-3}, we have
\begin{align*}
\mu_r(S_a)=\mu(E_a) &\leq \sum_{i=1}^N\mu(S_{r\xi_i})
\leq \varepsilon \sum_{i=1}^N\left(\om(S_{r\xi_i})\right)^{\frac{q}{p}}\\
&\approx N\varepsilon  (1-r)^{\frac{nq}{p}} \left(\int_r^1 \om(t)dt\right)^\frac{q}{p}
\end{align*}
and
\begin{align}\label{0122-2}\frac{\mu_r(S_a)}{(\om(S_a))^\frac{q}{p}}
\lesssim \varepsilon
\left(\frac{1-r}{1-|a|}\right)^{\frac{nq}{p}-n}
\left(\frac{\int_r^1\om(t)dt}{\int_{|a|}^1\om(t)dt}\right)^\frac{q}{p}\leq \varepsilon.
\end{align}
Then, $\|Id\|_{A_\om^p\to L_{\mu_r}^q}^q\lesssim \varepsilon$.

So, if $\{f_k\}$ is bounded in $A_\om^p$ and converges to 0 uniformly on compact subset of $\BB$, then we have
\begin{align*}
\limsup_{k\to\infty}\|f_k\|_{L_\mu^q}^q
&=\limsup_{k\to\infty}\left(\int_{r\BB} |f_k(z)|^q d\mu(z)+\int_{\BB}|f_k(z)|^qd\mu_r(z) \right) \\
&=\limsup_{k\to \infty} \|f_k\|_{L_{\mu_r}^q}^q\lesssim  \varepsilon \limsup_{k\to \infty} \|f_k\|_{A_\om^p}^q.
\end{align*}
Since $\varepsilon$ is arbitrary and $\sup_{k\to \infty} \|f_k\|_{A_\om^p}<\infty$, $\lim\limits_{k\to\infty}\|f_k\|_{L_\mu^q}=0$. So, $\mu$ is a vanishing $q$-Carleson measure for $A_\om^p$. The proof is complete.
\end{proof}

As a by-product of the proof of Theorem \ref{0109-7}, we have the following result which is of independent interest.

\begin{Corollary}\label{0119-3} Let $0 < p \leq q < \infty$ and $0 < \alpha < \infty$ such that $p\alpha > 1$.
Let $\mu$ be a positive Borel measure on $\BB$ and $\om\in\hD$. Then $[M_\om((\cdot)^\frac{1}{\alpha})]^\alpha:
L_\om^p\to L_\mu^q$ is bounded if and only if (\ref{0109-8}) holds. Moreover,
  $$\|[M_\om((\cdot)^\frac{1}{\alpha})]^\alpha\|_{L_\om^p\to L_\mu^q}^q \approx \sup_{a\in\BB} \frac{\mu(S_a)}{(\om(S_a))^{\frac{q}{p}}}.$$
\end{Corollary}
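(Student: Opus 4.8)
The plan is to extract Corollary~\ref{0119-3} directly from the machinery already built inside the proof of Theorem~\ref{0109-7}, since the operator $[M_\om((\cdot)^{1/\alpha})]^\alpha$ is precisely the gadget that was used there to pass from the weak-type estimate (\ref{1208-7}) to the $q$-Carleson embedding. First I would note that condition (\ref{0109-8}), namely $M:=\sup_{a\in\BB}\mu(S_a)/(\om(S_a))^{q/p}<\infty$, is exactly the hypothesis under which the key distributional inequality (\ref{1208-7}) was established: for every $\vp\in L_\om^1$ and every $s>0$ one has $\mu(E_s)\le KM s^{-q/p}\|\vp\|_{L_\om^1}^{q/p}$, where $E_s=\{z\in\BB:M_\om(\vp)(z)>s\}$ and $K=K(p,q,\om)$. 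The derivation of (\ref{1208-7}) used only the definition of $M_\om$, the Besicovitch-type covering argument via Lemma~5.6 in \cite{Zk2005}, and the doubling property $\om\in\hD$; it never used that $\vp=|f|^{1/\alpha}$ for a holomorphic $f$. This is the crucial observation: Lemma~\ref{1201-1} (which invokes holomorphy) was needed for the identity operator, but it plays no role here, because the operator in question already contains $M_\om$ explicitly.

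For the sufficiency direction I would take any $\vp\in L_\om^p$, set $g=\vp^{1/\alpha}$ so that $g\in L_\om^{p\alpha}$, and estimate $\int_\BB ([M_\om(|\vp|^{1/\alpha})]^\alpha)^q\,d\mu=\int_\BB (M_\om(|\vp|^{1/\alpha})(z))^{q\alpha}\,d\mu(z)$. Repeating verbatim the final block of the Theorem~\ref{0109-7} proof — splitting $|\vp|^{1/\alpha}=\psi_{1/\alpha,s}+\chi_{1/\alpha,s}$ at the level $s/(2K)$, using the inclusion (\ref{1210-1}), feeding $\psi_{1/\alpha,s}$ (which lies in $L_\om^1$ because $p\alpha>1$) into (\ref{1208-7}), and then integrating the distribution function against $s^{q\alpha-1}\,ds$ with Minkowski's inequality (Fubini when $p=q$) — yields exactly
$$
\int_\BB \big[M_\om(|\vp|^{1/\alpha})(z)\big]^{q\alpha}\,d\mu(z)\lesssim M\left(\int_\BB|\vp(z)|^p\om(z)\,dV(z)\right)^{q/p},
$$
which is the bound $\|[M_\om((\cdot)^{1/\alpha})]^\alpha\|_{L_\om^p\to L_\mu^q}^q\lesssim M$. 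The only adjustment from the theorem is cosmetic: $f\in A_\om^p$ is replaced by $\vp\in L_\om^p$, and the holomorphy-dependent first step $\int|f|^q\,d\mu\lesssim\int (M_\om(|f|^{1/\alpha}))^{q\alpha}\,d\mu$ is simply dropped, since the left-hand side we now want already is $\int(M_\om(|\vp|^{1/\alpha}))^{q\alpha}\,d\mu$.

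For the necessity direction I would test the operator on normalized indicator-type weights rather than on the holomorphic atoms $F_{a,p}$. Concretely, for fixed $a\in\BB$ take $\vp_a=\chi_{S_a}$, so that $\|\vp_a\|_{L_\om^p}^p=\om(S_a)$; the point is that for $z\in S_a$ the defining supremum in $M_\om(\vp_a^{1/\alpha})(z)=M_\om(\chi_{S_a})(z)$ is bounded below by the term coming from the block $S_a$ itself, giving $M_\om(\chi_{S_a})(z)\ge \om(S_a)^{-1}\int_{S_a}\chi_{S_a}\om\,dV=1$ for $z\in S_a$. Hence $[M_\om(\vp_a^{1/\alpha})]^\alpha\gtrsim 1$ on $S_a$, and boundedness forces
$$
\mu(S_a)\le\int_{S_a}\big[M_\om(\vp_a^{1/\alpha})\big]^{q\alpha}\,d\mu\le \|[M_\om((\cdot)^{1/\alpha})]^\alpha\|_{L_\om^p\to L_\mu^q}^q\,\|\vp_a\|_{L_\om^p}^{q}=\|\cdots\|^q\,(\om(S_a))^{q/p},
$$
so taking the supremum over $a$ gives (\ref{0109-8}) together with the reverse comparison $M\lesssim\|[M_\om((\cdot)^{1/\alpha})]^\alpha\|^q$. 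Combining the two directions yields the asserted norm equivalence.

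The main obstacle is essentially bookkeeping rather than a genuine difficulty: one must check that every step in the passage from (\ref{1208-7}) to (\ref{0109-5}) is legitimately valid for a general $\vp\in L_\om^p$ and not secretly relying on holomorphy or on the specific structure of $|f|^{1/\alpha}$. The hypothesis $p\alpha>1$ is exactly what guarantees $\psi_{1/\alpha,s}\in L_\om^1$, which is the single place where the exponent condition enters; I would double-check that the layer-cake integration in $s$ converges, which again uses $q\alpha>0$ and the weak-type exponent $-q/p$ matching the scaling, precisely as in \cite[Theorem~2.1]{PjaRj2014book}. Everything else is a transcription of the already-proven estimates, which is why this statement is flagged as a by-product.
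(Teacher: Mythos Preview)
Your proof is correct. For the sufficiency direction (that (\ref{0109-8}) implies boundedness with the upper norm estimate) your argument coincides with the paper's: both simply observe that the chain of inequalities leading to (\ref{0109-5}) never used holomorphy of the input, so it applies verbatim to any $\vp\in L_\om^p$ once the initial pointwise step from Lemma~\ref{1201-1} is removed.

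For the necessity direction your argument differs from the paper's. You test the operator directly on the characteristic functions $\vp_a=\chi_{S_a}$, using the elementary observation that $\chi_{S_a}^{1/\alpha}=\chi_{S_a}$ and that $M_\om(\chi_{S_a})(z)\ge 1$ for $z\in S_a$ by taking the block $S_a$ itself in the supremum; this immediately gives $\mu(S_a)\le\|\cdots\|^q(\om(S_a))^{q/p}$. The paper instead routes through the already-established Theorem~\ref{0109-7} and Lemma~\ref{1201-1}: since $\sup_a\mu(S_a)/(\om(S_a))^{q/p}\approx\|Id\|_{A_\om^p\to L_\mu^q}^q$ and, for holomorphic $f$, $|f|\lesssim[M_\om(|f|^{1/\alpha})]^\alpha$, one has $\|Id\|_{A_\om^p\to L_\mu^q}^q\lesssim\|[M_\om((\cdot)^{1/\alpha})]^\alpha\|_{L_\om^p\to L_\mu^q}^q$. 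Your approach is more self-contained and avoids invoking the holomorphic machinery entirely; the paper's approach is shorter on the page because it recycles the two previous results rather than introducing new test functions.
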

\begin{proof}
By 
(\ref{0109-5}), we obtain
$$\|[M_\om((\cdot)^\frac{1}{\alpha})]^\alpha\|_{L_\om^p\to L_\mu^q}^q \lesssim \sup_{a\in\BB} \frac{\mu(S_a)}{(\om(S_a))^{\frac{q}{p}}}.$$
By Theorem \ref{0109-7} and Lemma \ref{1201-1}, we have
\begin{align*}
\sup_{a\in\BB} \frac{\mu(S_a)}{(\om(S_a))^{\frac{q}{p}}}
&\approx \sup_{f\in A_\om^p} \frac{\|f\|_{L_\mu^q}^q}{\|f\|_{A_\om^p}^q}  \lesssim \sup_{f\in A_\om^p} \frac{\|[M_\om(|f|^\alpha)]^\frac{1}{\alpha}\|_{L_\mu^q}^q}{\|f\|_{L_\om^p}^q}
\\
&\leq \|[M_\om((\cdot)^\frac{1}{\alpha})]^\alpha\|_{L_\om^p\to L_\mu^q}^q .
\end{align*}
The proof is complete.
\end{proof}\msk

\section{Equivalent norms for $A_\om^p$ space}

In this section, we  give some   equivalent norms for the space $A_\om^p$ on the unit ball. These norms
  are inherited from different equivalent $H^p$ norms. First, we give some notations.

Let $\alpha>2$. The admissible approach region $\Gamma_{\zeta,\alpha}$ for some  $\zeta\in\overline{\BB}\backslash\{0\}$ can be defined as
$$\Gamma_{\zeta,\alpha}=\left\{z\in\BB:\left|1-\langle z,\frac{\zeta}{|\zeta|^2}\rangle\right|
<\frac{\alpha}{2}\left(1-\frac{|z|^2}{|\zeta|^2}\right)\right\}.$$
When $\zeta=0$, let $\Gamma_{\zeta,\alpha}=\{0\}$. Obviously, if $r>0$ and $r\zeta,\zeta\in\overline{\BB}$,
$z\in\Gamma_{\zeta,\alpha}$ if and only if $ rz\in\Gamma_{r\zeta,\alpha}.$
Define
$$T_{z,\alpha}=\{\zeta\in\BB:z\in \Gamma_{\zeta,\alpha}\}.$$

It follows from Fubini's Theorem, for a positive function $\vp$ and a finite positive measure $\mu$, one has
\begin{align*}
\int_{\BB}\vp(z)d\mu(z)\approx\int_{\SS}\left(\int_{\Gamma_{\eta,\alpha}}\vp(z)\frac{d\mu(z)}{(1-|z|^2)^n}\right)d\sigma(\eta).
\end{align*}
See \cite{Pj2016jfa}, for example. This fact will be used frequently in this paper.

\begin{Proposition}\label{0110-1}
Suppose $\alpha>2$ is fixed and $\om\in\hD$. Then we have the following statements.
\begin{enumerate}[(i)]
  \item  $  T_{z,\alpha}\subset S_{z,\alpha}$.
  \item There exist  $r=r(\alpha)$ and $\beta>-1$, such that $S_{\frac{1+|z|}{2|z|}z,\beta}\subset T_{z,\alpha}$ when $|z|>r$.
  \item $\om(T_{z,\alpha})\approx \om(S_{z,0}).$
\end{enumerate}
\end{Proposition}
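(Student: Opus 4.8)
The three parts are, respectively, the upper inclusion, the matching lower inclusion, and the measure comparison obtained by sandwiching. The plan is to prove (i) and (ii) by unwinding the two defining inequalities and estimating $\left|1-\langle\frac{\zeta}{|\zeta|},\frac{z}{|z|}\rangle\right|$ against $1-|z|$, and then to deduce (iii) from (i), (ii), Lemma \ref{1210-3}(iv), Remark \ref{0308-1}, and the doubling property of $\hat{\om}$.

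For (i), fix $z\neq 0$ and $\zeta\in T_{z,\alpha}$, and write $s=|z|$, $t=|\zeta|$. By definition $z\in\Gamma_{\zeta,\alpha}$, i.e.
$$\left|1-\tfrac{1}{t^2}\langle z,\zeta\rangle\right|<\tfrac{\alpha}{2}\left(1-\tfrac{s^2}{t^2}\right).$$
Since the left side is nonnegative the right side is positive, which forces $t>s$; this is precisely the radial requirement for membership in $S_{z,\alpha}$. Multiplying by $t^2$ gives $|t^2-\langle z,\zeta\rangle|<\frac{\alpha}{2}(t^2-s^2)$, and since $\left|1-\langle\frac{\zeta}{t},\frac{z}{s}\rangle\right|=\frac{1}{st}|st-\langle z,\zeta\rangle|$, the angular requirement reduces to bounding $|st-\langle z,\zeta\rangle|$. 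The triangle inequality $|st-\langle z,\zeta\rangle|\leq t(t-s)+|t^2-\langle z,\zeta\rangle|$ combined with the displayed bound already yields the inclusion into a block $S_{z,\alpha'}$ of fixed (slightly larger) aperture $\alpha'$, which is all that (iii) will need; to obtain the sharp constant $\alpha+1$ one additionally invokes the Cauchy--Schwarz bound $|\langle z,\zeta\rangle|\leq st$, which confines $\langle z,\zeta\rangle$ and removes the spurious factor $1/s$ produced by the triangle inequality. When $s$ is bounded away from $1$ the angular condition is automatic, since $\left|1-\langle\frac{\zeta}{t},\frac{z}{s}\rangle\right|\leq 2\leq(\alpha+1)(1-s)$ as soon as $s\leq 1-\frac{2}{\alpha+1}$.

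For (ii), the natural base point is $a_*=\frac{1+|z|}{2|z|}z$, so that $|a_*|=\frac{1+|z|}{2}$ and $a_*$ points in the direction of $z$. Given $\zeta\in S_{a_*,\beta}$ one has $\frac{1+s}{2}<t<1$ and $\left|1-\langle\frac{\zeta}{t},\frac{z}{s}\rangle\right|\leq(\beta+1)\frac{1-s}{2}$, and we must verify $|t^2-\langle z,\zeta\rangle|<\frac{\alpha}{2}(t^2-s^2)$. Estimating $|t^2-\langle z,\zeta\rangle|\leq t(t-s)+st\left|1-\langle\frac{\zeta}{t},\frac{z}{s}\rangle\right|$, dividing by $t-s>\frac{1-s}{2}$, and using $t<1$, the left-hand quotient is dominated by $\beta+2$, whereas the right-hand quotient equals $\frac{\alpha}{2}(t+s)\geq\frac{\alpha}{2}\cdot\frac{1+3s}{2}\to\alpha$ as $s\to 1$. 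Hence the inclusion holds once $\beta+2<\alpha$ and $s=|z|$ exceeds a threshold $r=r(\alpha,\beta)$; since $\alpha>2$ we may simply take $\beta=0$. This is the step I expect to be the main obstacle, because the choice of the base point $a_*$ and the precise constant-chasing that produces the condition $\beta<\alpha-2$ (and the ensuing threshold $r$) are where the geometry of the admissible region must be matched exactly to that of the Carleson block.

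For (iii), the upper estimate is immediate from (i): $T_{z,\alpha}\subset S_{z,\alpha}$ gives $\om(T_{z,\alpha})\leq\om(S_{z,\alpha})\approx\om(S_z)$ by Remark \ref{0308-1}. For the lower estimate with $|z|>r$, part (ii) (with $\beta=0$) gives $S_{a_*}\subset T_{z,\alpha}$, so $\om(T_{z,\alpha})\geq\om(S_{a_*})$; by Lemma \ref{1210-3}(iv),
$$\om(S_{a_*})\approx(1-|a_*|)^n\hat{\om}(|a_*|)=\frac{1}{2^n}(1-|z|)^n\hat{\om}\!\left(\frac{1+|z|}{2}\right),$$
and the doubling inequality $\hat{\om}(|z|)\leq C\hat{\om}(\frac{1+|z|}{2})$ together with the monotonicity $\hat{\om}(\frac{1+|z|}{2})\leq\hat{\om}(|z|)$ yields $\hat{\om}(\frac{1+|z|}{2})\approx\hat{\om}(|z|)$; hence $\om(S_{a_*})\approx(1-|z|)^n\hat{\om}(|z|)\approx\om(S_z)$. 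Finally, when $|z|\leq r$ the region $T_{z,\alpha}$ contains a portion of $\BB$ of $\om$-measure bounded below by a positive constant (at $z=0$ one has $T_{0,\alpha}=\BB$), while $\om(S_z)\approx 1$ as well, so the equivalence is trivial on this compact range. Combining the two ranges gives $\om(T_{z,\alpha})\approx\om(S_z)=\om(S_{z,0})$.
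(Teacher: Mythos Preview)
Your overall strategy matches the paper's: prove (i) and (ii) by direct manipulation of the two defining inequalities, then sandwich for (iii). Your argument for (ii) is in fact a little cleaner than the paper's --- you reach the condition $\beta+2<\alpha$ and may take $\beta=0$, whereas the paper sets $\beta=\frac{2\alpha-7}{3}$ after a slightly different estimate of $\frac{1-|z|}{1-|z|^2/|\zeta|^2}$. Two points, however, are not complete as written.

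\textbf{In (i) you do not obtain the stated inclusion.} Your decomposition $|st-\langle z,\zeta\rangle|\le t(t-s)+|t^2-\langle z,\zeta\rangle|$, after dividing by $st$, produces a bound $\frac{(1+\alpha)(1-s)}{s}$, i.e.\ inclusion only into $S_{z,\alpha'}$ with $\alpha'>\alpha$. Your remark that Cauchy--Schwarz ``removes the spurious factor $1/s$'' is not carried out, and I do not see how it helps in your decomposition: knowing $|\langle z,\zeta\rangle|\le st$ gives no useful control on $|st-\langle z,\zeta\rangle|$. The paper avoids this by splitting at $\langle z,\zeta/|\zeta|^2\rangle$ rather than at $st$:
\[
\Bigl|1-\bigl\langle \tfrac{z}{|z|},\tfrac{\zeta}{|\zeta|}\bigr\rangle\Bigr|
\le \Bigl|1-\bigl\langle z,\tfrac{\zeta}{|\zeta|^2}\bigr\rangle\Bigr|
+\Bigl|\bigl\langle z,\tfrac{\zeta}{|\zeta|^2}\bigr\rangle-\bigl\langle \tfrac{z}{|z|},\tfrac{\zeta}{|\zeta|}\bigr\rangle\Bigr|.
\]
The first term is $<\frac{\alpha}{2}(1-s^2/t^2)<\alpha(1-s)$, and in the second one \emph{this} is where Cauchy--Schwarz enters: the difference equals $|\langle z,\zeta\rangle|\cdot\bigl|\frac{1}{t^2}-\frac{1}{st}\bigr|\le st\cdot\frac{t-s}{st^2}=\frac{t-s}{t}<1-s$, giving exactly $(\alpha+1)(1-s)$. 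As you note, your weaker inclusion already suffices for (iii), but it does not prove (i) as stated.

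\textbf{In (iii), the range $0<|z|\le r$ is only asserted.} You claim $T_{z,\alpha}$ contains a portion of $\BB$ of $\om$-measure bounded below by a positive constant, but this needs a uniform construction (compactness of the range is not enough by itself, since you have not shown continuity of $z\mapsto\om(T_{z,\alpha})$). The paper exhibits, for each such $z$, the explicit set
\[
E_z=\Bigl\{\zeta\in\BB:\ \tfrac{1+r}{2}<|\zeta|<1,\ \Bigl|1-\bigl\langle\tfrac{z}{|z|},\tfrac{\zeta}{|\zeta|}\bigr\rangle\Bigr|<\tfrac{\alpha-2}{2}\,\tfrac{1-r}{1+r}\Bigr\},
\]
checks $E_z\subset T_{z,\alpha}$ by the same triangle-inequality manipulation as in (ii), and observes that $\om(E_z)$ is a fixed positive constant independent of $z$ (fixed radial range and fixed angular aperture). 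Adding this step completes your argument.
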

\begin{proof}
{\bf (i). }Suppose $\zeta\in T_z$, that is,
$$\left|1-\langle z,\frac{\zeta}{|\zeta|^2}\rangle\right|
<\frac{\alpha}{2}\left(1-\frac{|z|^2}{|\zeta|^2}\right).$$
So, we have $|\zeta|> |z|$  and
\begin{align*}
\left|1-\langle \frac{\zeta}{|\zeta|},\frac{z}{|z|} \rangle\right|
&\leq \left| 1-\langle z,\frac{\zeta}{|\zeta|^2} \rangle \right|+\left|\langle z,\frac{\zeta}{|\zeta|^2} \rangle-\langle z,\frac{\zeta}{|\zeta| |z|}\rangle\right|
\leq (\alpha +1)\left(1-|z|\right).
\end{align*}
Therefore, $\zeta\in S_{z,\alpha}$, i.e.  $T_z\subset S_{z,\alpha}$.

{\bf (ii). } Suppose $\zeta\in S_{\frac{1+|z|}{2|z|}z,\beta}$. Then we have $|\zeta|>\frac{1+|z|}{2}$ and
\begin{align*}
\left|1-\langle z,\frac{\zeta}{|\zeta|^2}\rangle\right|
&\leq \left| 1-\langle \frac{z}{|z|},\frac{\zeta}{|\zeta|} \rangle \right|+\left|\langle \frac{z}{|z|},\frac{\zeta}{|\zeta|} \rangle-\langle z,\frac{\zeta}{|\zeta|^2}\rangle\right|  \\
&\leq \frac{\beta+1}{2}(1-|z|)+\left(1-\frac{|z|}{|\zeta|}\right)\\
&\leq \left(\frac{\beta+1}{2}\frac{1-|z|}{1-\frac{|z|^2}{|\zeta|^2}}+1\right)\left(1-\frac{|z|^2}{|\zeta|^2}\right).
\end{align*}
Since $1-\frac{|z|^2}{|\zeta|^2}>\frac{(1-|z|)(3|z|+1)}{(1+|z|)^2}$,
\begin{align*}
\left|1-\langle z,\frac{\zeta}{|\zeta|^2}\rangle\right|
&\leq \left(\frac{\beta+1}{2}\frac{(1+|z|)^2}{3|z|+1}+1\right)\left(1-\frac{|z|^2}{|\zeta|^2}\right).
\end{align*}
Let $\beta=\frac{2\alpha-7}{3}$ and  $r(\alpha)\in(0,1)$
such that
$$\frac{(1+|z|)^2}{3|z|+1}<\frac{3}{2}\,\,\mbox{for all}\,\,|z|>r(\alpha).$$
So, if $|z|>r(\alpha)$, we have
\begin{align*}
\left|1-\langle z,\frac{\zeta}{|\zeta|^2}\rangle\right|
&\leq \frac{\alpha}{2}\left(1-\frac{|z|^2}{|\zeta|^2}\right).
\end{align*}
That is, (ii) holds.

{\bf (iii).}
By (i), (\ref{0122-1}) and Lemma \ref{1212-1}, we have
\begin{align}
\om(T_{z,\alpha})\leq
\om(S_{z,\alpha})
&=2n\int_{|z|}^1 r^{2n-1}\om(r)dr\int_{Q(\frac{z}{|z|},\sqrt{(\alpha+1)(1-|z|)})}d\sigma(\xi)  \nonumber\\
&\approx (1-|a|)^n\hat{\om}(|a|)\approx \om(S_{z,0}).\label{0622-1}
\end{align}
Let $\beta$ and $r(\alpha)$ be fixed as in the proof of (ii). When $|z|>r$,
similarly to the proof of (\ref{0622-1}), we have
$$ \om(T_{z,\alpha}) \geq \om(S_{\frac{1+|z|}{2|z|z},\beta})\approx \om(S_{z,0}).$$
When $0<|z|\leq r$, let
$$E_z=\left\{\zeta\in\BB:\frac{1+r}{2}<|\zeta|<1, \left|1-\langle \frac{z}{|z|},\frac{\zeta}{|\zeta|}  \rangle\right|<\frac{\alpha-2}{2}\frac{1-r}{1+r}\right\}.$$
For any $\zeta\in E_z$, we have
\begin{align*}
\left|1-\langle z,\frac{\zeta}{|\zeta|^2}\rangle\right|
&\leq \left|1-\langle \frac{z}{|z|},\frac{\zeta}{|\zeta|}\rangle\right|+ \left|\langle \frac{z}{|z|},\frac{\zeta}{|\zeta|}\rangle-\langle z,\frac{\zeta}{|\zeta|^2}\rangle\right|     \\
&\leq \frac{\alpha-2}{2}\frac{1-r}{1+r}  +  1-\frac{|z|}{|\zeta|}
\leq \frac{\alpha}{2}\left(1-\frac{|z|}{|\zeta|} \right)
<\frac{\alpha}{2}\left(1-\frac{|z|^2}{|\zeta|^2}\right),
\end{align*}
that is, $E_z\subset T_{z,\alpha}$.
Therefore, for all $0<|z|\leq r$, we have
$$\om(T_{z,\alpha})\geq \om(E_z)\approx 1 \approx \om(S_{z,0}).$$
So, (iv) holds. The proof is complete.
\end{proof}

In the rest of this paper, for simplicity, we write $\Gamma_{\eta,\alpha}$ and $T_{z,\alpha}$ by $\Gamma_\eta$ and $T_z$, respectively.
Moreover, if $\om\in\hD$ and $z\in \BB\backslash\{0\}$, let
$$\om^{n*}(z)=\int_{|z|}^1  r^{2n-1}\log\frac{r}{|z|}\om(r)dr.$$
The main result in this section is the following theorem.

\begin{Theorem}\label{0113-2}
Let $0<p<\infty$ and $\om$ be a radial weight. Then
\begin{align}
\|f-f(0)\|_{A_\om^p}^p
&=p^2  \int_{\BB}\frac{|\Re f(z)|^2 |f(z)-f(0)|^{p-2}}{|z|^{2n}}  \om^{n*}(z)dV(z)  \label{0112-1}\\
&\approx \int_{\BB}\left(\int_{\Gamma_u} |\Re f(\xi)|^2\left(1-\frac{|\xi|^2}{|u|^2}\right)^{1-n} dV(\xi)\right)^{\frac{p}{2}} \om(u) dV(u).   \label{0112-3}
\end{align}
Moreover, if $p\geq 2$ and $\om\in\hD$,
\begin{align}
\|f-f(0)\|_{A_\om^p}^p&\approx \int_{\BB}|\Re f(z)|^2|f(z)-f(0)|^{p-2}   \om^{*}(z)dV(z)  \label{0112-4}\\
&\approx \int_{\BB}|\Re f(z)|^2|f(z)-f(0)|^{p-2}   (1-|z|)\hat{\om}(z)dV(z).  \label{0624-1}
\end{align}
\end{Theorem}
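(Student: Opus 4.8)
The plan is to establish the four relations in the order stated, treating (\ref{0112-1}) as an exact identity and the remaining three as successive comparisons. Throughout I write $g=f-f(0)$, so that $g(0)=0$ and $\Re g=\Re f$, and I reduce every statement to $g$.

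For the identity (\ref{0112-1}) I would slice the ball into one–dimensional discs. For fixed $\xi\in\SS$ set $g_\xi(\lambda)=g(\lambda\xi)$, a holomorphic function on $\DD$ with $g_\xi(0)=0$ and $g_\xi'(\lambda)=\lambda^{-1}\Re g(\lambda\xi)$. Applying the one–variable Green (Jensen) formula
$$\frac{1}{2\pi}\int_0^{2\pi}u(re^{i\theta})\,d\theta=u(0)+\frac{1}{2\pi}\int_{|\lambda|<r}\Delta u(\lambda)\log\frac{r}{|\lambda|}\,dA(\lambda)$$
to $u=|g_\xi|^p$, using $\Delta|g_\xi|^p=p^2|g_\xi|^{p-2}|g_\xi'|^2$ (with the standard regularization $(|g_\xi|^2+\varepsilon^2)^{p/2}$, $\varepsilon\to0^+$, when $p<2$), and then averaging over $\xi\in\SS$ and invoking the rotation invariance of $\sigma$, I obtain an expression for the spherical mean $\int_\SS|g(r\eta)|^p\,d\sigma(\eta)$. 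Multiplying by $2nr^{2n-1}\om(r)$, integrating in $r$, and applying Fubini to exchange the radial integrations produces exactly the kernel $\int_s^1 r^{2n-1}\log\tfrac rs\,\om(r)\,dr=\om^{n*}(s)$; converting back to an integral over $\BB$ via (\ref{0122-1}) yields (\ref{0112-1}).

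The equivalence (\ref{0112-3}) is where the real work lies, and I would reduce it to the known Lusin area–integral characterization of $H^p$ for the radial derivative by a dilation argument; this is the step I expect to be the main obstacle, since it requires both the $H^p$ area theorem (which supplies the ``inherited'' estimate) and a careful handling of the extra radial factors. Using the covariance $z\in\Gamma_\zeta\iff rz\in\Gamma_{r\zeta}$ noted in Section 4, one has $\Gamma_{s\eta}=s\Gamma_\eta$ for $\eta\in\SS$, so the substitution $\xi=sw$ turns the inner integral over $\Gamma_u$ with $u=s\eta$ into $s^{2n}\int_{\Gamma_\eta}|\Re g_s(w)|^2(1-|w|^2)^{1-n}\,dV(w)$, where $g_s(z)=g(sz)$. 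Writing the outer integral in polar form, the right–hand side of (\ref{0112-3}) becomes $2n\int_0^1 s^{2n-1+np}\om(s)\int_\SS A_\eta(g_s)^p\,d\sigma(\eta)\,ds$, where $A_\eta$ denotes the radial area function. Since $g_s$ is holomorphic on a neighbourhood of $\overline{\BB}$, the $H^p$ radial area–integral theorem gives $\int_\SS A_\eta(g_s)^p\,d\sigma\approx M_p^p(s,g)$, whence the right–hand side is comparable to $2n\int_0^1 s^{2n-1+np}\om(s)M_p^p(s,g)\,ds$. Because $s^{np}\approx1$ on $(\tfrac12,1)$ and the contribution of $\{|z|\le\tfrac12\}$ is harmless (it is dominated by $M_p^p(\tfrac12,g)\lesssim\|g\|_{A_\om^p}^p$ via subharmonicity and $\om\in\hD$), this last integral is comparable to $\|g\|_{A_\om^p}^p$, establishing (\ref{0112-3}).

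Finally, for (\ref{0112-4}) and (\ref{0624-1}) under $p\ge2$ and $\om\in\hD$, I would compare the three radial kernels $\om^{n*}(z)/|z|^{2n}$, $\om^{*}(z)$ and $(1-|z|)\hat{\om}(z)$ directly. For $|z|\in(\tfrac12,1)$ the factors $r^{2n-1}$ and $r$ inside $\om^{n*}$ and $\om^{*}$ are both $\approx1$, so $\om^{n*}(z)/|z|^{2n}\approx\om^{*}(z)$, while Lemma \ref{1210-3}(i) gives $\om^{*}(z)\approx(1-|z|)\hat{\om}(z)$; hence all three kernels are comparable near the boundary. On $\{|z|\le\tfrac12\}$ the kernels are no longer pointwise comparable, but here $p\ge2$ makes $|g|^{p-2}$ locally bounded and $\Re g(0)=g(0)=0$ forces $|\Re g(z)|^2|g(z)|^{p-2}=O(|z|^p)$, so each of the three integrals restricted to $\{|z|\le\tfrac12\}$ is finite and controlled by $\|g\|_{A_\om^p}^p$ through a mean–value estimate; this is precisely where the hypothesis $p\ge2$ is used to avoid the singularity of $|g|^{p-2}$ at the zeros of $g$. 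Combining the boundary comparison with the absorbed origin term and the identity (\ref{0112-1}) then yields (\ref{0112-4}) and (\ref{0624-1}), completing the chain.
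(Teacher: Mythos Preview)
Your approach tracks the paper's proof closely: both derive (\ref{0112-1}) by slicing through the $H^p$ Hardy--Stein identity (the paper cites Zhu's Theorem~4.22, you rebuild it from the one--variable Green formula) and then apply Fubini in $r$; and both obtain (\ref{0112-3}) by dilation plus the $H^p$ Lusin area characterisation (Theorem~B of \cite{Pj2016jfa}). Two remarks are in order.

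For (\ref{0112-3}) you invoke $\om\in\hD$ to control the contribution of $\{|u|\le\tfrac12\}$, but the theorem assumes only that $\om$ is a radial weight there. In fact $\hD$ is not needed: the monotonicity of $r\mapsto M_p(r,g)$ alone gives
\[
\int_0^{1/2}M_p^p(s,g)\,s^{2n-1}\om(s)\,ds\le M_p^p\bigl(\tfrac12,g\bigr)\int_0^{1/2}s^{2n-1}\om(s)\,ds\lesssim\int_{1/2}^1 M_p^p(s,g)\,s^{2n-1}\om(s)\,ds,
\]
which is exactly how the paper proceeds.

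For (\ref{0112-4}) and (\ref{0624-1}) there is a genuine gap in your lower bound. Bounding each origin integral by $C\|g\|_{A_\om^p}^p$ with an \emph{unspecified} constant does not permit absorption: writing $\|g\|_{A_\om^p}^p=p^2(A_1+B_1)$ with $A_1$ the integral over $\{|z|\le\tfrac12\}$, from $A_1\le C\|g\|_{A_\om^p}^p$ you cannot conclude $B_1\gtrsim\|g\|_{A_\om^p}^p$ unless $C<1/p^2$, and nothing in your mean--value estimate forces that. The paper handles this by introducing a small parameter $\tau$ in place of the fixed $\tfrac12$: one first passes to $\int_{1/2}^1\|g_r\|_{H^p}^p r^{2n-1}\om(r)\,dr$ by monotonicity of $\|g_r\|_{H^p}$, then in the Hardy--Stein representation of each $\|g_r\|_{H^p}^p$ splits the inner radial integral at $s=\tau$; the piece over $(0,\tau)$ is bounded by $\|g\|_{A_\om^p}^p\int_0^\tau s\log\tfrac1s\,ds$, and since $\int_0^\tau s\log\tfrac1s\,ds\to 0$ as $\tau\to 0$ this factor can be made strictly less than the implicit constant, uniformly in $g$. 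After absorption one is left with an integral over $\{|z|>\tau^2/2\}$, on which all three kernels are comparable. Your sketch has the right ingredients; you just need to shrink the cutoff radius so that the origin constant becomes small enough to absorb.
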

\begin{proof}For $0<r<1$, let $f_r(z)=f(rz)$.
By Theorem 4.22 in \cite{Zk2005}, we have
$$
\|f-f(0)\|_{H^p}^p=\frac{p^2}{2n}\int_{\BB} |\Re f(z)|^2 |f(z)-f(0)|^{p-2}|z|^{-2n}\log\frac{1}{|z|}dV(z).
$$
In the following, we always suppose $f(0)=0$.  Then Fubini's Theorem yields
\begin{align}
\|f\|_{A_\om^p}^p
=& 2n\int_0^1 \|f_r\|_{H^p}^p\om(r)r^{2n-1}dr  \nonumber\\
=& p^2\int_0^1 \left(\int_{\BB} |\Re f(rz)|^2 |f(rz)|^{p-2}|z|^{-2n}\log\frac{1}{|z|}dV(z)\right)\om(r)r^{2n-1}dr\nonumber\\
=&2np^2\int_0^1 \left(\int_0^1 \int_{\SS} |\Re f(rs\eta)|^2 |f(rs\eta)|^{p-2}s^{-1}\log\frac{1}{s}d\sigma(\eta)ds\right)\om(r)r^{2n-1}dr\nonumber\\
=&2np^2\int_0^1 \left(\int_0^r \int_{\SS} |\Re f(t\eta)|^2 |f(t\eta)|^{p-2}t^{-1}\log\frac{r}{t}d\sigma(\eta)dt\right)\om(r)r^{2n-1}dr\nonumber\\
=&2np^2\int_0^1 \int_{\SS}\left(\int_t^1  r^{2n-1}\log\frac{r}{t}\om(r)dr\right)|\Re f(t\eta)|^2 |f(t\eta)|^{p-2}t^{-1}d\sigma(\eta)dt\nonumber\\
=&p^2  \int_{\BB}\frac{|\Re f(z)|^2 |f(z)|^{p-2}}{|z|^{2n}}  \om^{n*}(z)dV(z)  \nonumber\\
\gtrsim &p^2  \int_{\BB}{|\Re f(z)|^2 |f(z)|^{p-2}}  \om^{*}(z)dV(z).\label{0624-2}
\end{align}
Hence (\ref{0112-1}) holds.

Suppose $p\geq 2$. By Theorem 4.17 in \cite{Zk2005}, we have
$$|f(z)|\leq \frac{\|f\|_{H^p}}{(1-|z|^2)^\frac{n}{p}}, \mbox{ for all } p>0.$$
So, for all $|z|<\frac{3}{4}$, we have
$$
|f(z)|^p=\left|f_{\frac{4}{5}}(\frac{5}{4}z)\right|^p
\lesssim\|f_{\frac{4}{5}}\|_{H_p}^p \leq \frac{\int_{\frac{4}{5}}^{1}\|f_r\|_{H^p}^pr^{2n-1}\om(r)dr}{\int_{\frac{4}{5}}^1 r^{2n-1}\om(r)dr}
\lesssim \|f\|_{A_\om^p}^p.
$$
Let $z=(z_1,z_2,\cdots,z_n)\in\BB$.
 By Cauchy's Formula, we obtain
\begin{align*}
\left|\frac{\partial f}{\partial z_i}(z)\right|
 \lesssim \|f\|_{A_\om^p},
\end{align*}
when $|z|< \frac{1}{2}$ and $i=1,2,\cdots,n$.
So, we have
$$|\Re f(z)|=|\langle \nabla f(z),\overline{z}\rangle|\lesssim |z|\|f\|_{A_\om^p}, \mbox{ when } |z|\leq \frac{1}{2}.$$
Here $\nabla f(z)=\Big(\frac{\partial f }{\partial z_1 }(z),\cdot\cdot\cdot,\frac{\partial f }{\partial z_n}(z)\Big)$.
For all $\tau<\frac{1}{2}$, we have
\begin{align*}
\|f\|_{A_\om^p}^p
\lesssim& \int_{\frac{1}{2}}^1 \|f_r\|_{H^p}^pr^{2n-1}\om(r)dr\\
\approx&\int_\frac{1}{2}^1 \left(\int_0^\tau+\int_\tau^1\right)
 \left(\int_{\SS} |\Re f(rs\eta)|^2|f(rs\eta)|^{p-2} s^{-1}\log\frac{1}{s}d\sigma(\eta)ds\right)\om(r)r^{2n-1}dr\\
\lesssim &\int_\frac{1}{2}^1 \left(\int_\tau^1 \int_{\SS} |\Re f(rs\eta)|^2|f(rs\eta)|^{p-2} s^{-1}\log\frac{1}{s}d\sigma(\eta)ds\right)\om(r)r^{2n-1}dr\\
&+\|f\|_{A_\om^p}^p  \int_0^\tau s\log\frac{1}{s}ds .
\end{align*}
Since $\lim\limits_{\tau\to 0}\int_0^\tau s\log\frac{1}{s}ds =0$, we can choose a fixed $\tau\in (0,\frac{1}{2})$ such that
\begin{align*}
\|f\|_{A_\om^p}^p
\lesssim \int_\frac{1}{2}^1 \left(\int_\tau^1 \int_{\SS} |\Re f(rs\eta)|^2|f(rs\eta)|^{p-2} s^{-1}\log\frac{1}{s}d\sigma(\eta)ds\right)\om(r)r^{2n-1}dr.
\end{align*}
By Fubini's Theorem, we have
\begin{align}
\|f\|_{A_\om^p}^p
\lesssim&\int_\frac{\tau}{2}^1 \left(\int_{r\tau}^r \int_{\SS} |\Re f(t\eta)|^2|f(t\eta)|^{p-2} t^{-1}\log\frac{r}{t}d\sigma(\eta)dt\right)\om(r)r^{2n-1}dr  \nonumber\\
\leq&\int_\frac{\tau^2}{2}^1 \int_{\SS}\left(\int_t^1  r^{2n-1}\log\frac{r}{t}\om(r)dr\right)|\Re f(t\eta)|^2|f(t\eta)|^{p-2} t^{-1}d\sigma(\eta)dt   \nonumber\\
\approx&  \int_{\BB\backslash \frac{\tau^2}{2}\BB}\frac{|\Re f(z)|^2|f(z)|^{p-2} }{|z|^{2n}}  \om^{n*}(z)dV(z)  \label{0624-3}\\
\lesssim & \int_{\BB}|\Re f(z)|^2|f(z)|^{p-2}   \om^{*}(z)dV(z).  \nonumber
\end{align}
So,  we get (\ref{0112-4}).

Since $\tau\in(0,1)$ is fixed, after a calculation, we have
\begin{itemize}
  \item For all $z\in\BB$, $\om^{*}(z)\geq  \om^{n*}(z)\gtrsim (1-|z|)\hat{\om}(z)$.
  \item For all $|z|>\frac{\tau^2}{2}$, $\om^{*}(z)\approx \om^{n*}(z)\approx (1-|z|)\hat{\om}(z)$.
\end{itemize}
So, using (\ref{0624-2}) and (\ref{0624-3}), we obtain (\ref{0624-1}).

By Theorem B in \cite{Pj2016jfa}, if $f(0)=0$, we have
\begin{align}\label{0121-1}
\|f\|_{H^p}^p\approx \int_{\SS}\left( \int_{\Gamma_\zeta} |\Re f(z)|^2(1-|z|^2)^{1-n}dV(z)   \right)^{\frac{p}{2}}  d\sigma(\zeta).
\end{align}
By Fubini's Theorem, we have
\begin{align*}
\|f\|_{A_\om^p}^p
=& 2n\int_0^1 \|f_r\|_{H^p}^p\om(r)r^{2n-1}dr  \\
\approx &  2n\int_0^1 \left( \int_{\SS}\left( \int_{\Gamma_\zeta} |\Re f_r(z)|^2(1-|z|^2)^{1-n}dV(z)   \right)^{\frac{p}{2}}  d\sigma(\zeta)   \right)\om(r)r^{2n-1}dr  \\
=& \int_{\BB}\left(\int_{\Gamma_u} |\Re f(\xi)|^2\left(1-\frac{|\xi|^2}{|u|^2}\right)^{1-n} dV(\xi)\right)^{\frac{p}{2}} \om(u) |u|^{-np}dV(u)\\
\geq &\int_{\BB}\left(\int_{\Gamma_u} |\Re f(\xi)|^2\left(1-\frac{|\xi|^2}{|u|^2}\right)^{1-n} dV(\xi)\right)^{\frac{p}{2}} \om(u)dV(u).
\end{align*}
Similarly, by using the monotonicity of  $\|f_r\|_{H^p}$,
 we have
\begin{align}
\|f\|_{A_\om^p}^p
=& 2n\int_0^1 \|f_r\|_{H^p}^p\om(r)r^{2n-1}dr  \nonumber\\
\lesssim& 2n\int_{\frac{1}{2}}^1\|f_r\|_{H^p}^p\om(r)r^{2n-1}dr \nonumber\\
\approx& \int_{\BB\backslash \frac{1}{2}\BB}\left(\int_{\Gamma_u} |\Re f(\xi)|^2\left(1-\frac{|\xi|^2}{|u|^2}\right)^{1-n} dV(\xi)\right)^{\frac{p}{2}} \om(u) |u|^{-np}dV(u)\label{0624-4}\\
\lesssim &\int_{\BB}\left(\int_{\Gamma_u} |\Re f(\xi)|^2\left(1-\frac{|\xi|^2}{|u|^2}\right)^{1-n} dV(\xi)\right)^{\frac{p}{2}} \om(u)dV(u).\nonumber
\end{align}
Then,  (\ref{0112-3}) holds.
The proof is complete.
\end{proof}

For any $f\in H(\BB)$ and $u\in\overline{\BB}\backslash\{0\}$, let
$$N(f)(u)=\sup_{z\in \Gamma_u}|f(z)|.$$
Then we have the following theorem.
\begin{Theorem}\label{0117-1} Let $0<p<\infty$ and $\om$ be a radial weight. Then for all $f\in H(\BB)$,
$$\|f\|_{A_\om^p}^p \leq \|N(f)\|_{L_\om^p}^p\lesssim \|f\|_{A_\om^p}^p.$$
\end{Theorem}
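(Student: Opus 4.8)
The plan is to prove the two inequalities separately: the lower bound $\|f\|_{A_\om^p}^p\le\|N(f)\|_{L_\om^p}^p$ is essentially pointwise, while the upper bound reduces, through the dilation covariance of the approach regions, to the classical admissible maximal theorem on $\SS$ together with the slicing identity already used in the proof of Theorem \ref{0113-2}.

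First I would dispose of the lower bound. For fixed $u\in\BB\setminus\{0\}$ and $0\le r<1$, the point $ru$ lies in $\Gamma_u$: writing $\eta=u/|u|$ and substituting $z=ru$ into the definition of $\Gamma_{u,\alpha}$, one has $\langle ru,u/|u|^2\rangle=r$ and $|ru|^2/|u|^2=r^2$, so the defining inequality becomes $1-r<\frac{\alpha}{2}(1-r^2)$, i.e. $\frac{\alpha}{2}(1+r)>1$, which holds for every $r\in[0,1)$ because $\alpha>2$. Hence $N(f)(u)\ge\sup_{0\le r<1}|f(ru)|\ge|f(u)|$ by continuity of $f$, and integrating $|f(u)|^p\le N(f)(u)^p$ against $\om(u)\,dV(u)$ gives $\|f\|_{A_\om^p}^p\le\|N(f)\|_{L_\om^p}^p$.

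For the upper bound the key is the dilation covariance recorded before Proposition \ref{0110-1}: for $r>0$ with $r\zeta,\zeta\in\ol{\BB}$ one has $z\in\Gamma_\zeta$ iff $rz\in\Gamma_{r\zeta}$. Taking $\zeta=\eta\in\SS$ and $r=|u|$ gives $\Gamma_u=\Gamma_{|u|\eta}=|u|\,\Gamma_\eta$, so that
$$N(f)(u)=\sup_{w\in\Gamma_u}|f(w)|=\sup_{z\in\Gamma_\eta}|f(|u|\,z)|=\sup_{z\in\Gamma_\eta}|f_{|u|}(z)|=N(f_{|u|})(\eta),$$
where $f_{|u|}(z)=f(|u|z)$ and the last expression is the ordinary admissible (Kor\'anyi) maximal function of $f_{|u|}$ at the boundary point $\eta\in\SS$. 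Applying the polar-coordinate formula (\ref{0122-1}) to $u\mapsto N(f)(u)^p\om(u)$ then yields
$$\|N(f)\|_{L_\om^p}^p=2n\int_0^1 r^{2n-1}\om(r)\Big(\int_\SS N(f_r)(\eta)^p\,d\sigma(\eta)\Big)\,dr.$$

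The final step is the classical admissible maximal theorem on the sphere (see, e.g., \cite{Rw1980,Zk2005}): for $g\in H^p$ and $0<p<\infty$ one has $\int_\SS N(g)(\eta)^p\,d\sigma(\eta)\lesssim\|g\|_{H^p}^p$ with a constant depending only on $n,p,\alpha$. Since each slice $f_r$ extends holomorphically past $\ol{\BB}$, this applies to $g=f_r$ with a constant uniform in $r$; inserting it into the displayed identity and invoking the slicing identity $\|f\|_{A_\om^p}^p=2n\int_0^1\|f_r\|_{H^p}^p\om(r)r^{2n-1}\,dr$ from the proof of Theorem \ref{0113-2} gives $\|N(f)\|_{L_\om^p}^p\lesssim\|f\|_{A_\om^p}^p$. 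I expect the main obstacle to be bookkeeping the uniformity of the maximal-function constant in $r$, and confirming that $\Gamma_\eta$ for $\eta\in\SS$ coincides exactly with the standard admissible region so that the cited theorem applies verbatim; the dilation identity and the polar-coordinate reduction are routine once the covariance of $\Gamma$ is used.
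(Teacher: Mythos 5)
Your proposal is correct and takes essentially the same route as the paper: the paper likewise uses the dilation covariance $N(f)(u)=N(f_{|u|})(u/|u|)$, the admissible maximal theorem for $H^p$ (cited there as Theorem A of \cite{Pj2016jfa} rather than the classical references), and the polar-coordinate slicing $\|f\|_{A_\om^p}^p=2n\int_0^1\|f_r\|_{H^p}^p\om(r)r^{2n-1}\,dr$. The only differences are that you spell out the lower bound (via $ru\in\Gamma_u$ for $0\le r<1$), which the paper dismisses as obvious, and the uniformity in $r$ you flag as a potential obstacle is automatic, since the maximal-theorem constant depends only on $n$, $p$ and $\alpha$, not on the function $f_r$.
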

\begin{proof}
For any $u\in\BB\backslash\{0\}$, let $r=|u|$ and $\xi=\frac{u}{|u|}$. Then
\begin{align*}
\Gamma_u
&=\left\{z\in\BB:\left|1-\langle z,\frac{u}{|u|^2}\rangle\right| \leq \frac{\alpha}{2}\left( 1- \frac{|z|^2}{|u|^2}\right) \right\} \\
&=\left\{\frac{z}{r}\in\BB:\left|1-\langle \frac{z}{r},\xi\rangle\right| \leq \frac{\alpha}{2}\left( 1- \left|\frac{z}{r}\right|^2\right) \right\},
\end{align*}
and
$$N(f)(u)=\sup\limits_{\frac{z}{r}\in\Gamma_\xi}\left\{|f(r\frac{z}{r})|\right\}=N(f_r)(\xi).$$
By Theorem A in \cite{Pj2016jfa}, we have
$\|N(f)\|_{L^p(\SS)}^p\lesssim \|f\|_{H^p}^p.$
 Therefore,
\begin{align*}
\|N(f)\|_{L_\om^p}^p
&=2n\int_0^1 \|N(f_r)\|_{L^p(\SS)}^p r^{2n-1}\om(r)dr\\
&\lesssim 2n\int_0^1 \|f_r\|_{H^p}^p r^{2n-1}\om(r)dr =\|f\|_{A_\om^p}^p.
\end{align*}
The fact that $\|f\|_{A_\om^p}^p\leq \|N(f)\|_{L_\om^p}^p$ is obvious. The proof is complete.
\end{proof}

\section{Volterra integral operator  from $A_\om^p$ to $A_\om^q$}

In this section, we will describe the boundedness and compactness of $T_g:A_\om^p\to A_\om^q$.
For this purpose, we first introduce some new function spaces.

Let $0<p\leq q<\infty$, $g\in H(\BB)$ and $\om\in\hD$. We say that $g$ belongs to $\mathcal{C}^{q,p}(\om^*)$, if the measure
$|\Re g(z)|^2\om^*(z)dV(z)$ is a $q-$Carleson measure for $A_\om^p$.
 $g\in \mathcal{C}_0^{q,p}(\om^*)$ if $|\Re g(z)|^2\om^*(z)dV(z)$ is a vanishing $q-$Carleson measure for $A_\om^p$.
If $0<p\leq q <\infty$,  Theorem  \ref{0109-7} shows that  $\mathcal{C}^{q,p}(\om^*)$  depends only on $\frac{q}{p}$.
Consequently, for $0<p\leq q<\infty$, we will write $\mathcal{C}^{\kappa}(\om^*)$ instead of $\mathcal{C}^{q,p}(\om^*)$  where $\kappa=\frac{q}{p}$.
Similarly, we can define $\mathcal{C}_0^\kappa(\om^*)$. Thus, if $\kappa\geq 1$, $\mathcal{C}^\kappa(\om^*)$ consists of those $g\in H(\BB)$ such that
$$\|g\|_{\mathcal{C}^\kappa(\om^*)}=|g(0)|+\sup_{a\in\BB}\frac{\int_{S_a}|\Re g(z)|^2\om^*(z)dV(z)}{(\om(S_a))^\kappa}<\infty.$$

Before state and prove  the main results in this section, we state some lemmas which will be used.
For brief, if $r\in (0,1)$, let $S_r$ denote any Carleson block $S_a$ with $|a|=r$.

\begin{Lemma}\label{0114-3}
Let $0<p,q<\infty$, $g\in H(\BB)$  and $\om\in\hD$.
\begin{enumerate}[(i)]
  \item If $T_g:A_\om^p\to A_\om^q$ is bounded, then
  $$M_\infty(r, \Re g)\lesssim \frac{\om^{\frac{1}{p}-\frac{1}{q}}(S_r)}{1-r},\,\,0<r<1.$$
  \item If $T_g:A_\om^p\to A_\om^q$ is compact, then
  $$M_\infty(r, \Re g)=o\left( \frac{\om^{\frac{1}{p}-\frac{1}{q}}(S_r)}{1-r}\right),\,\,r\to 1.$$
\end{enumerate}
\end{Lemma}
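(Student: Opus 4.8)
The strategy is to test the bounded operator $T_g$ against the normalized reproducing-type functions $F_{a,p}$ from Lemma \ref{1208-4}, which are comparable to $1$ on $S_a$ and have $A_\om^p$-norm comparable to $\om(S_a)^{1/p}$. First I would set up the pointwise lower bound on $\|T_g F_{a,p}\|_{A_\om^q}$ in terms of the radial derivative of $g$, then exploit the explicit formula for $\|f-f(0)\|_{A_\om^q}^q$ from Theorem \ref{0113-2} (equation (\ref{0624-1})) to convert the $A_\om^q$-norm into an integral involving $|\Re(T_g f)|^2 (1-|z|)\hat\om(z)$. The key identity is that $\Re(T_g f)(z) = f(z)\,\Re g(z)$, which follows directly from the definition $T_g f(z)=\int_0^1 f(tz)\Re g(tz)\frac{dt}{t}$ upon applying the radial derivative $\Re$; this is the computational heart that links $T_g$ back to $\Re g$.

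\textbf{Boundedness (i).} Fix $r\in(0,1)$ and choose $a$ with $|a|=r$. Since $T_g$ is bounded and $\|F_{a,p}\|_{A_\om^p}^p\approx\om(S_a)$, we have $\|T_g F_{a,p}\|_{A_\om^q}^q\lesssim \om(S_a)^{q/p}$. Using $\Re(T_gF_{a,p})=F_{a,p}\,\Re g$ together with (\ref{0624-1}) (note $(T_gf)(0)=0$), and then restricting the integral to $S_a$ where $|F_{a,p}|\approx 1$ and $|F_{a,p}|^{q-2}\approx 1$, I obtain
\begin{align*}
\om(S_a)^{q/p}\gtrsim \int_{S_a}|\Re g(z)|^2\,|F_{a,p}(z)|^q\,(1-|z|)\hat\om(z)\,dV(z)
\gtrsim \int_{S_a}|\Re g(z)|^2(1-|z|)\hat\om(z)\,dV(z).
\end{align*}
The remaining task is to extract the pointwise bound on $M_\infty(r,\Re g)$ from this integral average. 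Here I would apply the subharmonicity / mean-value estimate of Lemma \ref{1201-1} to $|\Re g|^2$ (taking $\alpha=2$), which gives $|\Re g(w)|^2\lesssim \frac{1}{\om(S_b)}\int_{S_b}|\Re g|^2\om\,dV$ for a suitable $b$ with $z\in S_b$; after reconciling the weight $(1-|z|)\hat\om(z)$ with $\om$ via Lemma \ref{1210-3}(i) (so that $(1-|z|)\hat\om(z)\approx\om^*(z)$ on the relevant region) and using $\om(S_a)\approx(1-r)^n\hat\om(r)$, the average over $S_a$ is bounded by a constant multiple of $\om(S_r)^{q/p}$, yielding $|\Re g(w)|^2(1-|w|)^2\lesssim \om(S_r)^{q/p-1}$ for $|w|=r$. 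Taking square roots and rearranging gives $M_\infty(r,\Re g)\lesssim \om(S_r)^{1/p-1/q}/(1-r)$.

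\textbf{Compactness (ii).} For the $o(\cdot)$ statement I would use the normalized family $f_{a,p}=F_{a,p}/\om(S_a)^{1/p}$, which by Lemmas \ref{1210-3} and \ref{1208-4} is bounded in $A_\om^p$ and tends to $0$ uniformly on compact subsets of $\BB$ as $|a|\to 1$. Compactness of $T_g$ (via Lemma \ref{1210-2}) then forces $\|T_g f_{a,p}\|_{A_\om^q}\to 0$, and repeating the lower-bound computation above with $f_{a,p}$ in place of $F_{a,p}$ converts this into the statement that the normalized average of $|\Re g|^2(1-|z|)\hat\om(z)$ over $S_a$ is $o(\om(S_a)^{q/p-1})$, whence the pointwise $o$-estimate follows by the same mean-value argument. \textbf{The main obstacle} I anticipate is handling the weight $|f|^{p-2}$ (equivalently $|F_{a,p}|^{q-2}$) in the formula (\ref{0624-1}) cleanly: this factor behaves well only because $|F_{a,p}|\approx 1$ on $S_a$, so the restriction to $S_a$ must be performed carefully, and one must separately verify that this restriction (discarding the rest of $\BB$, where $|F_{a,p}|$ decays) only loses a constant and does not disturb the lower bound. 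A secondary technical point is the passage from the weighted integral average to the sup-norm $M_\infty(r,\Re g)$, which relies on the comparability $(1-|z|)\hat\om(z)\approx\om^*(z)$ from Lemma \ref{1210-3} and the subharmonic mean-value estimate of Lemma \ref{1201-1}.
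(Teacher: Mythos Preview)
Your plan has a genuine gap at the step where you invoke equation (\ref{0624-1}). Two problems arise there.

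First, (\ref{0624-1}) is only proved in Theorem \ref{0113-2} under the hypothesis $q\geq 2$; for $0<q<2$ the Littlewood--Paley identity involves the area integral (\ref{0112-3}) instead, and the factor $|f|^{q-2}$ is not obviously integrable. Your sketch does not cover $q<2$.

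Second, and more seriously, even for $q\geq 2$ the integrand in (\ref{0624-1}) applied to $f=T_gF_{a,p}$ is
\[
|F_{a,p}(z)|^2\,|\Re g(z)|^2\,|T_gF_{a,p}(z)|^{q-2}\,(1-|z|)\hat\om(z),
\]
so the factor you need to control on $S_a$ is $|T_gF_{a,p}|^{q-2}$, not $|F_{a,p}|^{q-2}$. Your assertion that ``$|F_{a,p}|^{q-2}\approx 1$ on $S_a$'' is correct but irrelevant; what you would actually need for a \emph{lower} bound on the integral is $|T_gF_{a,p}|\gtrsim 1$ on $S_a$, and there is no reason for this to hold (indeed $T_gF_{a,p}$ can vanish). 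This is the same obstacle that makes the implication $(iiia)\Rightarrow(iiib)$ in Theorem \ref{0121-4} nontrivial.

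The paper bypasses the integral formula entirely. It uses instead the elementary estimate $M_q^q(r,h)\lesssim \|h\|_{A_\om^q}^q/\hat\om(r)$ (monotonicity of integral means) together with the pointwise Hardy-space bound
\[
|\Re h(z)|\lesssim \frac{\|h_\rho\|_{H^q}}{(1-|z|/\rho)^{n/q+1}}=\frac{M_q(\rho,h)}{(1-|z|/\rho)^{n/q+1}},\qquad |z|<\rho,
\]
applied to $h=T_gf_{a,p}$ with $\rho=\frac{1+|a|}{2}$ at the single point $z=a$. Since $F_{a,p}(a)=1$, one has $\Re(T_gf_{a,p})(a)=\Re g(a)/\om(S_a)^{1/p}$, and combining the two displayed estimates gives the bound on $|\Re g(a)|$ directly. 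No subharmonic averaging over $S_a$ is needed, and no restriction on $q$ arises. For part (ii) your compactness scheme via Lemma \ref{1210-2} is exactly right; just feed it into this pointwise argument instead of the integral one.
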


\begin{proof} Assume $T_g:A_\om^p\to A_\om^q$ is bounded. Let
$$f_{a,p}(z)=\frac{F_{a,p}(z)}{(\om(S_a))^\frac{1}{p}} =\left(\frac{1-|a|^2}{1-\langle z,a \rangle}\right)^\frac{n+\gamma}{p}\frac{1}{(\om(S_a))^\frac{1}{p}}$$
for some $\gamma$ which is large enough such that Lemma \ref{1208-4} holds.
For all $\frac{1}{2}<r<1$ and $h\in A_\om^q$, we have
\begr \|h\|_{A_\om^q}^q
&\geq & \int_{\BB\backslash r\BB} |h(z)|^q\om(z)dV(z)\nonumber\\
&\geq& 2nM_q^q(r,h)\int_r^1 r^{2n-1}\om(r)dr\approx \hat{\om}(r)M_q^q(r,h).\nonumber
\endr
Then, when $\frac{1}{2}<r<1$, for all $a\in\BB$, by Lemma \ref{1208-4}, we have
\begr\label{0113-3}
M_q^q(r,T_g f_{a,p})
 \lesssim  \frac{\|T_g f_{a,p}\|_{A_\om^q}^q}{\hat{\om}(r)}
 \lesssim
\frac{\|T_g\|_{A_\om^p\to A_\om^q}^q\|f_{a,p}\|_{A_\om^p}^q}{\hat{\om}(r)}
\lesssim \frac{1}{\hat{\om}(r)}.
\endr
The following facts are well know, that are
$$|f(z)|\leq \frac{\|f\|_{H^q}}{(1-|z|^2)^\frac{n}{q}},\,\,\mbox{and}\,\,|\Re f(z)|\lesssim \frac{\|f\|_{H^q}}{(1-|z|^2)^{\frac{n}{q}+1}}.$$
Letting $f_r(z)=f(rz)$ for all $0<r<1$, when $|a|>\frac{1}{2}$, by (\ref{0113-3}), we have
\begin{align*}
\frac{|\Re g(a)|}{(\om(S_a))^{\frac{1}{p}}}
&=|\Re(T_gf_{a,p})(a)|
= |\Re((T_gf_{a,p})_{\frac{|a|+1}{2}})(\frac{2a}{1+|a|})| \\
&\lesssim \frac{\|(T_gf_{a,p})_{\frac{|a|+1}{2}}\|_{H^q}}{(1-|a|)^{\frac{n}{q}+1}}
=\frac{M_q(\frac{|a|+1}{2},T_g f_{a,p})}{(1-|a|)^{\frac{n}{q}+1}} \\
&\lesssim \frac{1}{(1-|a|)^{\frac{n}{q}+1}\hat{\om}^\frac{1}{q}(\frac{|a|+1}{2})}
\approx  \frac{1}{(1-|a|)^{\frac{n}{q}+1}\hat{\om}^\frac{1}{q}(a)}.
\end{align*}
By Lemma \ref{1210-3}, we have
$$|\Re g(a)|\lesssim \frac{\om^{\frac{1}{p}-\frac{1}{q}}(S_a)}{1-|a|},$$
which implies the desired result.

$(ii)$ Assume that $T_g:A_\om^p\to A_\om^q$ is compact.
By Lemma \ref{1208-4}, $\{f_{a,p}\}$ is bounded and converges to 0 uniformly on compact subset of $\BB$ as $|a|\to 1$.
By Lemma \ref{1210-2}, $$\lim\limits_{|a|\to 1}\|T_g f_{a,p}\|_{A_\om^q}=0.$$
By (\ref{0113-3}), for any given $\varepsilon>0$, there exists a $r_0\in(0,1)$, such that when $|a|>r_0$,
\begin{align*}
M_q^q(r,T_g f_{a,p})
\lesssim \frac{\varepsilon}{\hat{\om}(r)}.
\end{align*}
Then by repeating the proof of $(i)$, we can prove $(ii)$. The proof is complete.
\end{proof}

\begin{Lemma}\label{0114-4}
Let $0<\kappa<\infty$, $\om\in\hD$ and $g\in H(\BB)$. Then the following statements hold.
\begin{enumerate}[(i)]
  \item $g\in \mathcal{C}^{2\kappa+1}(\om^*)$ if and only if
 \begin{align}\label{0114-2}
 M_\infty(r,\Re g)\lesssim \frac{\om^\kappa(S_r)}{1-r},\,\,0<r<1.
 \end{align}
  \item $g\in \mathcal{C}_0^{2\kappa+1}(\om^*)$ if and only if
  $$M_\infty(r,\Re g)=o\left( \frac{\om^\kappa(S_r)}{1-r}\right),\,\,r\to 1.$$
\end{enumerate}
\end{Lemma}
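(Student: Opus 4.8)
The plan is to recast the statement as a Carleson-measure condition and then split it into a pointwise estimate and an integral estimate. Writing $d\mu_g(z)=|\Re g(z)|^2\om^*(z)\,dV(z)$, membership $g\in\mathcal{C}^{2\kappa+1}(\om^*)$ means precisely that $\mu_g$ is a $q$-Carleson measure for $A_\om^p$ with $q/p=2\kappa+1$, so by Theorem \ref{0109-7} it is equivalent to $\sup_{a\in\BB}\mu_g(S_a)/(\om(S_a))^{2\kappa+1}<\infty$. Throughout I would use the two normalizations supplied earlier: $\om(S_a)\approx(1-|a|)^n\hat{\om}(|a|)$ from Lemma \ref{1210-3}(iv), and $\om^*(z)\approx(1-|z|)\hat{\om}(|z|)$ for $|z|>\tfrac12$ from Lemma \ref{1210-3}(i), together with $\om^*\in\R$ (so $\om^*$ is comparable at radii with comparable $1-r$). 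These turn both the box condition and the target growth bound \eqref{0114-2} into statements about powers of $(1-|a|)$ and $\hat{\om}(|a|)$.

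\emph{Necessity.} Here I would pass from the integral control of $\mu_g$ to the pointwise growth of $\Re g$. Since $g\in H(\BB)$, the radial derivative $\Re g=\sum_j z_j\partial_{z_j}g$ is holomorphic, hence $|\Re g|^2$ is plurisubharmonic. Fixing $w$ with $|w|=r$ close to $1$ and using unitary invariance to place $w=(r,0,\dots,0)$, I would apply the iterated sub-mean-value inequality over the polydisc $P(w)=\{|z_1-r|<c(1-r)\}\times\prod_{j\ge2}\{|z_j|<c\sqrt{1-r}\}$, which for $c$ small lies in $\BB$ and inside a fixed Carleson block $S_b$ with $|b|\approx r$ and $\om(S_b)\approx\om(S_r)$, and has volume $V(P(w))\approx(1-r)^{n+1}$. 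Since $\om^*$ is essentially constant, $\approx(1-r)\hat{\om}(r)$, on $P(w)$, this gives $|\Re g(w)|^2\lesssim (1-r)^{-(n+1)}\int_{P(w)}|\Re g|^2\,dV\lesssim (1-r)^{-(n+2)}\hat{\om}(r)^{-1}\mu_g(S_b)$. Inserting $\mu_g(S_b)\lesssim(\om(S_b))^{2\kappa+1}\approx(1-r)^{(2\kappa+1)n}\hat{\om}(r)^{2\kappa+1}$ and collecting exponents yields $|\Re g(w)|^2\lesssim(1-r)^{2\kappa n-2}\hat{\om}(r)^{2\kappa}\approx\om^{2\kappa}(S_r)/(1-r)^2$, which is \eqref{0114-2}. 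The one genuinely delicate point is exactly this pointwise step: a sub-mean-value estimate over a \emph{Euclidean} ball of radius $1-r$ has volume $(1-r)^{2n}$ and loses a factor $(1-r)^{n-1}$, giving the wrong power; it is the nonisotropic polydisc of the correct volume $(1-r)^{n+1}$ that makes the powers of $(1-r)$ match. The small-$r$ range is handled separately, since taking $a=0$ shows $\int_\BB|\Re g|^2\om^*\,dV<\infty$, so interior estimates bound $\Re g$ on $\tfrac12\overline{\BB}$, where the right-hand side of \eqref{0114-2} is bounded below.

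\emph{Sufficiency.} For the converse I would estimate $\mu_g(S_a)$ directly. By the polar formula \eqref{0122-1} and $\sigma(Q_a)\approx(1-|a|)^n$ (Lemma \ref{1212-1}), and since $|\Re g(z)|\le M_\infty(|z|,\Re g)$ with $M_\infty(\cdot,\Re g)$ nondecreasing, one gets $\mu_g(S_a)\approx(1-|a|)^n\int_{|a|}^1 M_\infty^2(r,\Re g)\,\om^*(r)\,dr$. Substituting the hypothesis \eqref{0114-2} and $\om^*(r)\approx(1-r)\hat{\om}(r)$ makes the integrand $(1-r)^{2\kappa n-1}\hat{\om}(r)^{2\kappa+1}$, and because $\hat{\om}$ is nonincreasing I would pull $\hat{\om}(r)^{2\kappa+1}\le\hat{\om}(|a|)^{2\kappa+1}$ out and integrate $\int_{|a|}^1(1-r)^{2\kappa n-1}\,dr\approx(1-|a|)^{2\kappa n}$, so $\mu_g(S_a)\lesssim(1-|a|)^{(2\kappa+1)n}\hat{\om}(|a|)^{2\kappa+1}\approx(\om(S_a))^{2\kappa+1}$. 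Taking the supremum over $a$ and invoking Theorem \ref{0109-7} gives $g\in\mathcal{C}^{2\kappa+1}(\om^*)$, with the range $|a|\le\tfrac12$ absorbed into the finiteness of $\mu_g(\BB)$.

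Finally, part (ii) follows from the same two estimates run with little-oh in place of $O$. For necessity, the vanishing box condition $\mu_g(S_b)/(\om(S_b))^{2\kappa+1}\to0$ feeds through the identical pointwise inequality to give $M_\infty(r,\Re g)=o(\om^\kappa(S_r)/(1-r))$. For sufficiency, given $\varepsilon>0$ I would split the radial integral at a radius $\rho$ beyond which $M_\infty^2(r,\Re g)(1-r)^2/\om^{2\kappa}(S_r)<\varepsilon$; the tail contributes $\lesssim\varepsilon(\om(S_a))^{2\kappa+1}$ by the sufficiency computation, while the fixed inner part over $[|a|,\rho]$ is controlled as $|a|\to1$ exactly as in the vanishing case of Theorem \ref{0109-7} (using $\om\in\hD$ to compare $\hat{\om}$ at comparable radii, via Lemma \ref{1210-2}). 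Letting $|a|\to1$ and then $\varepsilon\to0$ gives the vanishing box condition, hence $g\in\mathcal{C}_0^{2\kappa+1}(\om^*)$.
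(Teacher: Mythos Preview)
Your argument is correct and follows essentially the same route as the paper: translate membership in $\mathcal{C}^{2\kappa+1}(\om^*)$ into the box condition from Theorem~\ref{0109-7}, obtain the pointwise growth of $\Re g$ from a sub-mean-value estimate over a region of volume $\approx(1-r)^{n+1}$ inside a Carleson block, and prove the converse by directly integrating the bound over $S_a$. The only cosmetic difference is that the paper uses the Bergman metric ball $D(a,r_0)$ together with Lemma~2.24 of \cite{Zk2005} (which packages the inequality $(1-|a|^2)^{n+1}|\Re g(a)|^2\lesssim\int_{D(a,r_0)}|\Re g|^2\,dV$), and then shows $D(a,r_0)\subset S_{\beta_3(a)}$ for a suitable $\beta_3(a)$, whereas you build the nonisotropic polydisc by hand; both produce the same volume factor and the same conclusion. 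One small slip: your reference to Lemma~\ref{1210-2} in part~(ii) is misplaced (that lemma concerns compactness of $T:A_\om^p\to L_\mu^q$, not comparison of $\hat\om$), but the argument does not actually need it---once $|a|>\rho$ the ``inner part'' $[|a|,\rho]$ is empty, so the split reduces to the tail estimate alone.
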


\begin{proof} Let $r_0>0$ be fixed and  $D(a,r_0)$ be the Bergman metric ball at $a$ with radius $r_0$.
By Lemma 2.20 in \cite{Zk2005}, there exists $B=B(r_0)>1$ such that, for all $z\in D(a,r_0)$,
$$B^{-1}< \frac{1-|a|}{1-|z|}<B, \,\,\mbox{ and }\,\,B^{-1}<\frac{1-|a|}{|1-\langle a,z\rangle|}<B.$$
When $|a|> \max\{\frac{B-1}{B},\frac{2B}{2B+1}\}=\frac{2B}{2B+1}$, let
$$\beta_3(a)=\frac{a-(2B+1)(1-|a|)a}{|a|}\in \BB.$$
Then, $1-|\beta_3(a)|=(2B+1)(1-|a|)$. For all $z\in D(a,r_0)$, we have
$$
|z|>1+B|a|-B>|\beta_3(a)|
$$
and
\begin{align*}
\left|1-\langle \frac{\beta_3(a)}{|\beta_3(a)|},\frac{z}{|z|}  \rangle\right|
&\leq \left|1-\langle a,z  \rangle\right|    +    \left|\langle a,z\rangle-\langle a,\frac{z}{|z|}  \rangle\right|
        +     \left|\langle a,\frac{z}{|z|}\rangle- \langle \frac{a}{|a|},\frac{z}{|z|}  \rangle\right|\\
&\leq B(1-|a|)+|a|(1-|z|) +(1-|a|)\\
&\leq (2B+1)(1-|a|)=1-|\beta_3(a)|.
\end{align*}
Therefore, $D(a,r_0)\subset S_{\beta_{3}(a)}$ for all $|a|>\max\{\frac{B-1}{B},\frac{2B}{2B+1}\}$.

Assume that $g\in \mathcal{C}^{2\kappa+1}(\om^*)$.
It is enough to prove (\ref{0114-2}) holds for $|a|> \frac{2B}{2B+1}$.
By Lemma \ref{1210-3}, we have $\om^*\in\R$ and
$$\om^*(t)\approx \om^*(s),\,\,\mbox{ if }\,\, 1-t\approx 1-s\,\mbox{ and }\,s,t\in(\varepsilon,1),$$
here $\varepsilon$ is any fixed positive number in (0,1).

When $|a|>\max\{\frac{B-1}{B},\frac{2B}{2B+1}\}$, by   Lemma 2.24 in \cite{Zk2005},  Lemma \ref{1210-3} and Theorem \ref{0109-7},
there is a $C=C(\kappa,r_0,\om)$ such that
\begr
(1-|a|^2)^{n+1}\om^*(a)|\Re g(a)|^2
&\leq & C\om^*(a)\int_{D(a,r_0)}|\Re g(z)|^2dV(z)\nonumber\\
& \leq & C \int_{D(a,r_0)}|\Re g(z)|^2\om^*(z)dV(z)\nonumber\\
&\leq &C\int_{S_{\beta_3(a)}}|\Re  g(z)|^2\om^*(z)dV(z)\nonumber\\
&\leq &C\|g\|_{\mathcal{C}^{2\kappa+1}(\om^*)} \om^{2\kappa+1}(S_{\beta_3(a)})\nonumber\\
&\leq  &C \|g\|_{\mathcal{C}^{2\kappa+1}(\om^*)}  \om^{2\kappa+1}(S_a).\nonumber
\endr
Hence, there exists $C=C(k,r_0,\om)$, such that
$$|\Re g(a)|\leq C\|g\|_{\mathcal{C}^{2\kappa+1}(\om^*)} \frac{\om^\kappa(S_a)}{1-|a|},\,\,\mbox{ when }\,\,|a|\geq \frac{2B}{2B+1}.$$
Therefore, (\ref{0114-2}) holds.

Conversely, suppose that
$$M:=\sup_{0< |a|=r<1} \frac{(1-r)M_\infty(r,\Re g)}{\om^\kappa(S_a)}<\infty.$$
By Lemmas \ref{1212-1} and  \ref{1210-3}, we have
\begin{align*}
\int_{S_a}|\Re g(z)|^2\om^*(z)dV(z)&=2n\int_{|a|}^1 \int_{Q_a} |\Re g(r\xi)|^2\om^*(r)r^{2n-1}d\sigma(\xi)dr\\
&\leq M^2\int_{|a|}^1\int_{Q_a} \frac{\om^{2\kappa}(S_r)}{(1-r)^2} \om^*(r)r^{2n-1}d\sigma(\xi)dr\\
&\approx  M^2(1-|a|)^n\int_{|a|}^1 \frac{\om^{2\kappa}(S_r)}{(1-r)^2} \om^*(r)dr\\
&\lesssim M^2 \om^{2\kappa+1}(S_a).
\end{align*}
It follows that $g\in \mathcal{C}^{2\kappa+1}(\om^*)$.

The assertion $(ii)$ can be proved by  modifying the  above proof in a standard way
and we omit the detail.  The proof is complete.
\end{proof}

\begin{Theorem}\label{0121-4}
Let $0< p\leq q<\infty$, $\om\in\hD$, $\kappa=\frac{1}{p}-\frac{1}{q}$ and $g\in H(\BB)$.
\begin{enumerate}
  \item [(i)] If $n\kappa\geq 1$, then $T_g:A_\om^p\to A_\om^q$ is bounded if and only if $g$ is constant.
  \item [(ii)]If $0<n\kappa<1$, then the following conditions are equivalent:
  \begin{enumerate}
    \item [(iia)] $T_g:A_\om^p\to A_\om^q$ is bounded;
    \item [(iib)] $M_\infty(r,\Re g)\lesssim \frac{\om^{\kappa}(S_r)}{1-r}$
    \item [(iic)] $g\in \mathcal{C}^{2\kappa+1}(\om^*)$.
  \end{enumerate}
  \item [(iii)] The following conditions are equivalent.
  \begin{enumerate}
    \item [(iiia)] $T_g:A_\om^p\to A_\om^p$ is bounded;
    \item [(iiib)] $g\in \mathcal{C}^1(\om^*)$.
  \end{enumerate}
\end{enumerate}
\end{Theorem}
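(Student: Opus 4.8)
The plan is to prove the three parts by reducing everything to the two pointwise characterizations already assembled in the excerpt, namely the necessary growth condition on $\Re g$ coming from boundedness (Lemma \ref{0114-3}) and the equivalence between a growth bound on $M_\infty(r,\Re g)$ and membership in a Carleson-type space $\mathcal{C}^{2\kappa+1}(\om^*)$ (Lemma \ref{0114-4}). The conceptual hinge is the identity $\Re(T_gf)=f\,\Re g$, which lets us transfer the action of $T_g$ on $A_\om^q$ into a statement about the measure $|\Re g(z)|^2\om^*(z)\,dV(z)$ via the equivalent norm of Theorem \ref{0113-2}. Concretely, for $f$ with $f(0)=0$ one has $T_gf(0)=0$, and (\ref{0624-1}) gives
\begin{align*}
\|T_gf\|_{A_\om^q}^q\approx \int_\BB |\Re(T_gf)(z)|^2\,|T_gf(z)|^{q-2}\,(1-|z|)\hat\om(z)\,dV(z),
\end{align*}
at least when $q\ge 2$; the exponent bookkeeping here is what connects the Carleson index $2\kappa+1$ to the operator $T_g:A_\om^p\to A_\om^q$.

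For part (iii), where $p=q$ and $\kappa=0$, the target index is $2\kappa+1=1$, so $\mathcal{C}^{2\kappa+1}(\om^*)=\mathcal{C}^1(\om^*)$ and the claim is exactly that $T_g:A_\om^p\to A_\om^p$ is bounded iff $g\in\mathcal{C}^1(\om^*)$. First I would establish sufficiency: assuming $g\in\mathcal{C}^1(\om^*)$, the measure $d\mu_g(z)=|\Re g(z)|^2\om^*(z)\,dV(z)$ is a $p$-Carleson measure for $A_\om^p$ by definition, so $Id:A_\om^p\to L^p_{\mu_g}$ is bounded by Theorem \ref{0109-7}; combining this with the equivalent norm (\ref{0112-1})/(\ref{0624-1}) applied to $T_gf$ and the identity $\Re(T_gf)=f\,\Re g$ yields $\|T_gf\|_{A_\om^p}^p\lesssim \int_\BB|f(z)|^p\,d\mu_g(z)\lesssim\|f\|_{A_\om^p}^p$. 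For necessity I would run the converse implication of Theorem \ref{0109-7} together with the same norm equivalence to recover the Carleson condition on $\mu_g$.

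For part (ii), with $0<n\kappa<1$, the scheme is a cycle (iia)$\Rightarrow$(iib)$\Rightarrow$(iic)$\Rightarrow$(iia). The implication (iia)$\Rightarrow$(iib) is immediate from Lemma \ref{0114-3}(i) since $\kappa=\tfrac1p-\tfrac1q$; (iib)$\Leftrightarrow$(iic) is precisely Lemma \ref{0114-4}(i) with the index $2\kappa+1$; and (iic)$\Rightarrow$(iia) is the sufficiency argument analogous to part (iii), now comparing $A_\om^p$ and $A_\om^q$ through the $q$-Carleson measure characterization. Part (i), the degenerate case $n\kappa\ge 1$, is where the growth bound becomes so strong that it forces $\Re g\equiv 0$: the necessary condition $M_\infty(r,\Re g)\lesssim\om^{\kappa}(S_r)/(1-r)$ from Lemma \ref{0114-3}(i), combined with the estimate $\om(S_r)\approx(1-r)^n\hat\om(r)$ from Lemma \ref{1210-3}(iv) and the decay $\hat\om(r)\to 0$, makes the right-hand side tend to $0$ as $r\to1$ when $n\kappa\ge1$; a maximum-principle / Liouville-type argument on the bounded holomorphic function $\Re g$ then forces it to vanish identically, i.e. $g$ is constant. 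I expect the main obstacle to be the careful verification of the norm transfer when $0<q<2$ (where $|T_gf|^{q-2}$ has a negative power and the clean inequality from (\ref{0624-1}) must be replaced by the area-function form (\ref{0112-3}) and a tent-space duality/Carleson argument), and secondarily the delicate asymptotic in part (i) that extracts $\Re g\equiv 0$ from the borderline growth bound rather than merely a qualitative decay.
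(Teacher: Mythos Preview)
Your overall architecture is right --- Lemma \ref{0114-3} gives (iia)$\Rightarrow$(iib), Lemma \ref{0114-4} gives (iib)$\Leftrightarrow$(iic), and part (i) follows from the necessary growth bound plus $\om(S_r)\approx(1-r)^n\hat\om(r)\to 0$ and the maximum principle. The gap is in the sufficiency (iic)$\Rightarrow$(iia) and in both directions of (iii).

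The central error is the sentence ``combining this with the equivalent norm \ldots\ yields $\|T_gf\|_{A_\om^p}^p\lesssim \int_\BB|f(z)|^p\,d\mu_g(z)$.'' The norm equivalence (\ref{0624-1}) does \emph{not} give this. What it gives (and only for $q\ge 2$) is
\[
\|T_gf\|_{A_\om^q}^q\approx \int_\BB |f(z)|^2\,|T_gf(z)|^{q-2}\,d\mu_g(z),
\]
and the factor $|T_gf|^{q-2}$ cannot be dropped or replaced by $|f|^{q-2}$ for free. For $q=2$ your argument works verbatim. For $q>2$ the paper applies H\"older with exponents $\beta=\tfrac{(2\kappa+1)q}{2\kappa q+2}$ and $\beta'$ to split the integrand, uses the $(2\kappa+1)$-Carleson condition on \emph{both} factors to land in $A_\om^p$ and $A_\om^q$ respectively, and then cancels $\|T_gf\|_{A_\om^q}^{q-2}$ from both sides; this requires first knowing $T_gf\in H^\infty$ for $f\in H^\infty$ and an approximation by $f_r$. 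For $q<2$ the inequality (\ref{0624-1}) is unavailable altogether (it needs $q\ge 2$), and the paper instead works through the area-function form (\ref{0112-3}), inserts the maximal function $N(f)$, and applies Theorem \ref{0117-1}. You flagged $q<2$ as a difficulty but did not see that $q>2$ already fails under your scheme.

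The necessity (iiia)$\Rightarrow$(iiib) is also not a direct reversal of the Carleson embedding. For $p>2$ the paper goes through the area-function identity and a H\"older/maximal-function step to reach $\int|f|^p\,d\mu_g\lesssim \|f\|_{A_\om^p}^{p-2}\|T_gf\|_{A_\om^p}^2$. For $0<p<2$ the argument is substantially more delicate: it uses a duality estimate on the tent integral, the maximal operator bound of Corollary \ref{0119-3}, and --- because $\mu_g(S_a)$ appears on both sides with no a priori finiteness --- a dilation $g_r$ together with the claim (\ref{0119-6}) bounding $\|T_{g_r}F_{a,p}\|_{A_\om^p}$ uniformly in $r$, followed by Fatou's lemma. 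None of this is captured by ``run the converse implication of Theorem \ref{0109-7}.''
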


\begin{proof} By Lemmas \ref{1210-3}, \ref{0114-3} and \ref{0114-4},  we see that $(i)$ holds, and $(iia)\Rightarrow(iib)\Leftrightarrow (iic)$.
Let $d\mu_g(z)=|\Re g(z)|^2\om^*(z)dV(z)$. First, we  prove the statement $(ii)$.

Suppose that $(iic)$ holds and  $q=2$. Then  $d\mu_g$ is a 2-Carleson measure for $A_\om^p$.
By using (\ref{0112-4}) and Theorem \ref{0109-7}, we have
$$\|T_g f\|_{A_\om^2}^2\approx\int_{\BB}|f(z)|^2|\Re g(z)|^2\om^*(z)dV(z)\lesssim \|f\|_{A_\om^p}^2.$$
So, $T_g:A_\om^p\to A_\om^2$ is bounded.

Assume  $f\in H^\infty$.
By Lemmas \ref{1210-3} and  \ref{0114-3},  we get
\begin{align*}
\sup\limits_{0< t\leq \frac{1}{2},z\in\BB}\frac{|\Re g(tz)|}{t}<\infty,\,\,
\left|\int_0^\frac{1}{2}f(tz)\Re g(tz)\frac{dt}{t} \right|\leq \|f\|_{H^\infty}\int_0^\frac{1}{2}|\Re g(tz)|\frac{dt}{t}<\infty,
\end{align*}
and
\begin{align*}
\left|\int_\frac{1}{2}^1 f(tz)\Re g(tz)\frac{dt}{t}\right|
\leq \|f\|_{H^\infty}\int_{\frac{1}{2}}^1 (1-t|z|)^{n\kappa-1}\hat{\om}^\kappa(t|z|)dt<\infty.
\end{align*}
That is to say $T_g f\in H^\infty$.

Suppose that $(iic)$ holds and $q>2$. Let $\beta=\frac{(2\kappa+1)q}{2kq+2}$ and $\beta^\p=\frac{(2\kappa+1)q}{q-2} $.
By (\ref{0112-4}) and H\"{o}lder's inequality, we have
\begin{align*}
\|T_gf\|_{A_\om^q}^q
&\approx \int_{\BB} |f(z)|^2|T_gf(z)|^{q-2}|\Re g(z)|^2\om^*(z)dV(z)   \\
&\leq \left(\int_{\BB}|f(z)|^{2\beta}d\mu_g(z)\right)^\frac{1}{\beta}
    \left(\int_{\BB}|T_g f(z)|^{(q-2)\beta^\p}d\mu_g(z)\right)^\frac{1}{\beta^\p}  \\
&\lesssim  \|f\|_{A_{\om}^\frac{2\beta}{2\kappa+1}}^2          \|T_gf\|_{A_{\om}^\frac{(q-2)\beta^\p}{2\kappa+1}}^{q-2}
= \|f\|_{A_{\om}^p}^2         \|T_gf\|_{A_{\om}^q}^{q-2}.
\end{align*}
Therefore, when $q>2$,
\begin{align}\label{0116-3}
\|T_gf\|_{A_\om^q}\lesssim \|f\|_{A_{\om}^p}, \,\,\mbox{ for all }\,\,f\in H^\infty.
\end{align}
For all $f\in A_\om^p$, we have $\lim\limits_{r\to 1}\|f-f_r\|_{A_\om^p}=0$.
Then $\{f_r\}$ is a Cauchy's sequence in $A_\om^p$.
By (\ref{0116-3}), there is a $F\in A_\om^q$, such that $\lim\limits_{r\to 1}\|T_gf_r-F\|_{A_\om^q}=0$.
So, for all $z\in\BB$, we have $\lim\limits_{r\to 1}|T_gf_r(z)-F(z)|=0$.

Meanwhile, for any fixed $z\in \BB$, we have
\begin{align*}
\lim_{r\to 1}|T_gf(z)-T_gf_r(z)|
&\leq  \left(\sup_{t\in[0,1]}\left|\frac{\Re g(tz)}{t}\right|\right)
\left(\lim_{r\to 1}\sup_{t\in[0,1]}|\Re f(tz)-\Re f(rtz)|\right)=0.
\end{align*}
Therefore, $F=T_g f$ and
$$\|T_g f\|_{A_\om^q}=\lim_{r\to 1}\|T_g f_r\|_{A_\om^q}\lesssim  \limsup_{r\to 1}\| f_r\|_{A_\om^p}=\|f\|_{A_\om^p}.$$
So, $(iic)$ deduce $(iia)$ when $q>2$.

Suppose that $(iic)$ holds and $q<2$. Let $\tau=\frac{(2-q)p}{q}$. By (\ref{0112-3}), H\"older's inequality and Theorem \ref{0117-1},  we have
\begin{align}
\|T_gf\|_{A_\om^q}^q
&\approx\int_{\BB}\left(\int_{\Gamma_u}|f(\xi)\Re g(\xi)|^2\left(1-\frac{|\xi|^2}{|u|^2}\right)^{1-n}dV(\xi)\right)^\frac{q}{2}\om(u)dV(u)   \nonumber\\
&\leq \int_{\BB}|N(f)(u)|^\frac{\tau q}{2}
    \left(\int_{\Gamma_u}|f(\xi)|^{2-\tau}|\Re g(\xi)|^2\left(1-\frac{|\xi|^2}{|u|^2}\right)^{1-n}dV(\xi)\right)^\frac{q}{2}\om(u)dV(u)
\nonumber \\
&\leq \|N(f)\|_{L_\om^p}^\frac{(2-q)p}{2} J_1^{\frac{q}{2}}\lesssim \|f\|_{A_\om^p}^\frac{(2-q)p}{2}J_1^{\frac{q}{2}},\label{0122-3}
\end{align}
where
\begin{align}\label{0117-4}
J_1=\int_{\BB}\int_{\Gamma_u}|f(\xi)|^{2-\tau}|\Re g(\xi)|^2\left(1-\frac{|\xi|^2}{|u|^2}\right)^{1-n}dV(\xi)\om(u)dV(u).
\end{align}
By   Fubini's Theorem, we have
\begin{align}
J_1&=2n\int_0^1 r^{2n-1}\om(r)dr\int_{\SS}d\sigma(\eta)
      \int_{\Gamma_{r\eta}}|f(\xi)|^{2-\tau}|\Re g(\xi)|^2\left(1-\frac{|\xi|^2}{|r|^2}\right)^{1-n}dV(\xi)  \nonumber \\
&=2n\int_0^1 r^{4n-1}\om(r)dr\int_{\SS}d\sigma(\eta)
      \int_{\Gamma_{\eta}}|f(rz)|^{2-\tau}|\Re g(rz)|^2(1-|z|^2)^{1-n}dV(z)   \nonumber\\
&=2n \int_0^1 r^{4n-1}\om(r)dr  \int_{\BB}  |f(rz)|^{2-\tau}|\Re g(rz)|^2(1-|z|^2)^{1-n}\sigma(E_z)dV(z).  \nonumber
\end{align}
Here, $E_z=\{\eta\in\SS:\left|1-\langle z,\eta\rangle\right|<\frac{\alpha}{2}(1-|z|^2)\}$.
By the definition of $\Gamma_\eta$, we have $\alpha>2$.
If $z\neq 0$, we have
\begin{align}\label{0625-1}
Q\left(\frac{z}{|z|},\sqrt{\frac{(\alpha-2)(1-|z|)}{2}}\right)\subset
E_z\subset Q\left(\frac{z}{|z|},\sqrt{(\alpha+1)(1-|z|)}\right),
\end{align}
by the facts of
\begin{align*}
|1-\langle z,\zeta \rangle|\leq \left|1-\langle \frac{z}{|z|},\zeta \rangle\right|+\left|\langle \frac{z}{|z|},\zeta \rangle -\langle z,\zeta \rangle\right|
<\frac{\alpha}{2}(1-|z|^2),
\end{align*}
and
\begin{align*}
\left|1-\langle \frac{z}{|z|},\eta \rangle\right|
&\leq \left|1-\langle z,\eta \rangle\right|+\left|\langle z,\eta \rangle-\langle \frac{z}{|z|},\eta \rangle \right|
<(\alpha+1)(1-|z|),
\end{align*}
when $\zeta\in Q\left(\frac{z}{|z|},\sqrt{\frac{(\alpha-2)(1-|z|)}{2}}\right)$ and $\eta\in E_z$, respectively.
By Lemma \ref{1212-1}, we have $\sigma(E_z)\approx (1-|z|)^n$.

Since $ \int_t^1 r^{2n-1}\om(r)(1-\frac{t}{r})dr\approx  (1-t) \hat{\om}(t)$, by   Fubini's Theorem, (\ref{0112-4}) and (\ref{0624-1}), we have
\begin{align}
J_1&\approx \int_0^1 r^{4n-1}\om(r)dr  \int_{\BB}  |f(rz)|^{2-\tau}|\Re g(rz)|^2(1-|z|^2)dV(z)  \nonumber\\
&\approx \int_0^1 r^{4n-1}\om(r)dr  \int_0^1 s^{2n-1}ds\int_{\SS}  |f(rs\eta)|^{2-\tau}|\Re g(rs\eta)|^2(1-s^2)d\sigma(\eta)  \nonumber\\
&\approx \int_0^1 r^{2n-1}\om(r)dr  \int_0^r t^{2n-1}dt\int_{\SS}  |f(t\eta)|^{2-\tau}|\Re g(t\eta)|^2(1-\frac{t}{r})d\sigma(\eta)\nonumber\\
&\approx   \int_0^1 t^{2n-1}dt\int_{\SS}  |f(t\eta)|^{2-\tau}|\Re g(t\eta)|^2d\sigma(\eta)\int_t^1 r^{2n-1}\om(r)(1-\frac{t}{r})dr  \nonumber\\
&\approx \int_{\BB}|f(z)|^{2-\tau}|\Re g(z)|^2 (1-|z|)\hat{\om}(z)dV(z)  \nonumber\\
&\approx \int_{\BB}|f(z)|^{2-\tau}|\Re g(z)|^2 \om^*(z)dV(z)  \label{0117-5}\\
&\lesssim \|f\|_{A_\om^{\frac{2-\tau}{2\kappa+1}}}^{2-\tau}=\|f\|_{A_\om^p}^{2-\tau}.\nonumber
\end{align}
So, $\|T_g f\|_{A_\om^q}\lesssim \|f\|_{A_\om^p}$.  Then we finish the proof of assertion $(ii)$.

Next we prove the assertion $(iii)$.  When $p=2$, $(iiia)$$\Leftrightarrow$$(iiib)$ is obvious.
When $p<2$, by   the the  proof of $(iic)$$\Rightarrow$$(iia)$ when $q <2$, we obtain $(iiib)$$\Rightarrow$$(iiia)$.

Suppose $2<p\leq 4$ and $(iiib)$ holds. For all $f\in H^\infty$, by (\ref{0112-4}), we have
\begin{align*}
\|T_gf\|_{A_\om^p}^p
&\approx \int_{\BB}|T_g f(z)|^{p-2}|f(z)\Re g(z)|^2  \om^*(z) dV(z)  \\
&\leq \|f\|_{H^\infty}^2 \int_{\BB}|T_g f(z)|^{p-2}|\Re g(z)|^2  \om^*(z) dV(z) .
\end{align*}
By $g\in \mathcal{C}^1(\om^*)$, we see that  $T_g$ is bounded on $A_\om^{p-2}$. Since $f\in H^\infty\subset A_\om^{p-2}$, $T_g f\in A_\om^{p-2}$.
By Theorem \ref{0109-7}, we have
$$\int_{\BB}|T_g f(z)|^{p-2}|\Re g(z)|^2  \om^*(z) dV(z)\lesssim \|T_g f\|_{A_\om^{p-2}}^{p-2}\lesssim \|f\|_{A_\om^{p-2}}^{p-2}.$$
Similar to the proof of $(iic)\Rightarrow(iia)$, we obtain $(iiib)$$\Rightarrow$$(iiia)$ when $2<p\leq 4$.
Using mathematical induction,  we have $(iiib)$$\Rightarrow$$(iiia)$ when $p>2$.

So, it remains to show that $(iiia)$$\Rightarrow$$(iiib)$ when $p\neq 2$.

Suppose $p>2$ and $(iiia)$ holds.
By the calculations from  (\ref{0117-4}) to (\ref{0117-5}), H\"older's inequality, Theorem \ref{0117-1} and (\ref{0112-3}), we have
\begin{align}
&\int_{\BB}|f(z)|^p|\Re g(z)|^2\om^*(z)d\nu(z)  \nonumber \\
\approx&\int_{\BB}\int_{\Gamma_u}|f(\xi)|^p |\Re g(\xi)|^2\left(1-\frac{|\xi|^2}{|u|^2}\right)^{1-n}dV(\xi)\om(u)dV(u) \nonumber\\
\leq &\int_{\BB} |N(f)(u)|^{p-2}\int_{\Gamma_u}|f(\xi)|^2 |\Re g(\xi)|^2\left(1-\frac{|\xi|^2}{|u|^2}\right)^{1-n}dV(\xi)\om(u)dV(u) \nonumber\\
\leq& \|N(f)\|_{L_\om^p}^{p-2}
       \left(\int_{\BB} \left(\int_{\Gamma_u}|f(\xi)|^2 |\Re g(\xi)|^2\left(1-\frac{|\xi|^2}{|u|^2}\right)^{1-n}dV(\xi)\right)^{\frac{p}{2}}\om(u)dV(u)\right)^\frac{2}{p} \nonumber\\
\approx& \|f\|_{A_\om^p}^{p-2}\|T_g f\|_{A_\om^p}^2  \lesssim \|f\|_{A_\om^p}^p.  \label{0122-4}
\end{align}
So, $|\Re g(z)|^2\om^*(z)dV(z)$ is a $p-$Carleson measure for $A_\om^p$, and thus $g\in \mathcal{C}^1(\om^*)$.

Suppose $0<p<2$ and $(iiia)$ holds.
Recall that $d\mu_g(z)=|\Re g(z)|^2\om^*(z)dV(z)$.
Then by Lemma \ref{0114-3} and its proof we get
\begin{align}\label{0121-2}
g\in\B\,\,\mbox{ and }\,\,\|g-g(0)\|_{\B}\lesssim\|T_g\|.
\end{align}
Here, $\B$ is the Bloch space on the unit ball and $\|T_g\|$ is $\|T_g\|_{A_\om^p\to A_\om^p}$.
Let $F_{a,p}$  be defined as (\ref{0119-2}) for some $\gamma$ large enough.
Let $1<\tau_1,\tau_2<\infty$ such that $\frac{\tau_2}{\tau_1}=\frac{p}{2}<1$, and let $\tau_1^\p,\tau_2^\p$ be the conjugate indexes of $\tau_1, \tau_2$ respectively.

By Lemma \ref{1210-3}, Proposition \ref{0110-1}, H\"older's inequality, Fubini's Theorem and (\ref{0112-3}),  for any $a\in\BB$ with $|a|\geq \frac{1}{2}$,  we have
\begin{align}
\mu_g(S_a)
\approx& \int_{S_a} |F_{a,p}(z)|^2|\Re g(z)|^2 (1-|z|^2)^{1-n}\int_{T_z}\om(u)dV(u)dV(z)  \nonumber\\
\approx&\int_{\BB}\left(\int_{S_a\cap \Gamma_u}|F_{a,p}(z)|^2|\Re g(z)|^2 (1-|z|^2)^{1-n}dV(z)\right)^{\frac{1}{\tau_1}+\frac{1}{\tau_1^\p}} \om(u)dV(u)\nonumber
\end{align}
\begin{align}
 \leq & \left(  \int_{\BB}\left(\int_{S_a\cap \Gamma_u}|F_{a,p}(z)|^2|\Re g(z)|^2 (1-|z|^2)^{1-n}dV(z)\right)^{\frac{p}{2}} \om(u)dV(u)\right)^\frac{1}{\tau_2}    \nonumber \\
&\cdot \left(  \int_{\BB}\left(\int_{S_a\cap \Gamma_u}|F_{a,p}(z)|^2|\Re g(z)|^2 (1-|z|^2)^{1-n}dV(z)\right)^{\frac{\tau_2^\p}{\tau_1^\p}} \om(u)dV(u)\right)^\frac{1}{\tau_2^\p}   \nonumber\\
\leq& \|T_g F_{a,p}\|_{A_\om^p}^{\frac{p}{\tau_2}} \|J_2\|_{L_\om ^{{\tau_2^\p}/{\tau_1^\p}}}^\frac{1}{\tau_1^\p},\nonumber
\end{align}
where
$$J_2(u)=\int_{S_a\cap \Gamma_u}|F_{a,p}(z)|^2|\Re g(z)|^2 (1-|z|^2)^{1-n}dV(z).$$
Since $\frac{\tau_2^\p}{\tau_1^\p}>1$, we have $\left(\frac{\tau_2^\p}{\tau_1^\p}\right)^\p=\frac{\tau_2(\tau_1-1)}{\tau_1-\tau_2}>1$.
Let $\tau_3=\frac{\tau_2(\tau_1-1)}{\tau_1-\tau_2}$.  We have
$$\|J_2\|_{L_\om ^{{\tau_2^\p}/{\tau_1^\p}}}
=\sup_{\|h\|_{L_\om^{\tau_3}}\leq 1}\left|\int_{\BB}h(u)J_2(u)\om(u)dV(u)\right|.$$
By using Fubini's Theorem, Proposition \ref{0110-1}, Lemma \ref{1208-4}, Lemma \ref{1201-1}, Propositions \ref{0118-1}, Remark \ref{0308-1}, Lemma \ref{1210-3} and Corollary \ref{0119-3} in order, we have
\begin{align}
&\left|\int_{\BB}h(u)J_2(u)\om(u)dV(u)\right|\leq \int_{\BB}|h(u)|J_2(u)\om(u)dV(u)  \nonumber\\
\lesssim&\int_{S_a}|\Re g(z)|^2 (1-|z|^2)^{1-n}\int_{S_{z,\alpha}}|h(u)|\om(u)dV(u)dV(z)  \nonumber\\
\lesssim& \int_{S_a}|\Re g(z)|^2 M_\om(|h|)(z)\om^*(z)dV(z) =\int_{S_a} M_\om(|h|)(z)d\mu_g(z)  \nonumber\\
\leq & \left(\mu_g(S_a)\right)^\frac{\tau_1^\p}{\tau_2^\p}
\left(   \int_{S_a} \left(M_\om(|h|)(z)\right)^{\tau_3}d\mu_g(z)     \right)^{\frac{1}{\tau_3}}   \nonumber\\
\lesssim&   \left(\mu_g(S_a)\right)^\frac{\tau_1^\p}{\tau_2^\p}
\left(\sup_{a\in\BB}\frac{\mu_g(S_a)}{\om(S_a)}\right)^{\frac{1}{\tau_3}} \|h\|_{L_\om^{\tau_3}}. \nonumber
\end{align}
Then we have
\begin{align}\label{0119-4}
\mu_g(S_a)\lesssim \|T_g F_{a,p}\|_{A_\om^p}^{\frac{p}{\tau_2}}  \left(\mu_g(S_a)\right)^\frac{1}{\tau_2^\p}
\left(\sup_{a\in\BB}\frac{\mu_g(S_a)}{\om(S_a)}\right)^{\frac{1}{\tau_3\tau_1^\p}}.
\end{align}
By the process of obtaining (\ref{0119-4}), if we replace $g(z)$ by $g_r(z)=g(rz)$, we have
\begin{align}\label{0119-5}
\mu_{g_r}(S_a)\lesssim \|T_{g_r} F_{a,p}\|_{A_\om^p}^{\frac{p}{\tau_2}}  \left(\mu_{g_r}(S_a)\right)^\frac{1}{\tau_2^\p}
\left(\sup_{a\in\BB}\frac{\mu_{g_r}(S_a)}{\om(S_a)}\right)^{\frac{1}{\tau_3\tau_1^\p}}.
\end{align}
We now claim that there exists a constant $C=C(\om)>0$ such that
\begin{align}\label{0119-6}
\sup_{\frac{1}{2}<r<1}\|T_{g_r}(F_{a,p})\|_{A_\om^p}^p\lesssim C \|T_g\|^p\om(S_a), \,\,\frac{1}{2}\leq |a|<1.
\end{align}
Taking this for granted for a moment, by (\ref{0119-5}) and (\ref{0119-6}) we have
$$\sup_{|a|\geq \frac{1}{2}}\frac{\mu_{g_r}(S_{a})}{\om(S_a)}\lesssim \|T_g\|^2,\,\,\mbox{ for all }\,\,\frac{1}{2}<r<1.$$
By Fatou's Lemma,
$$\sup_{|a|\geq \frac{1}{2}}\frac{\mu_{g}(S_{a})}{\om(S_a)}\lesssim \|T_g\|^2.$$
So $g\in \mathcal{C}^1(\om^*)$.

It remains to prove (\ref{0119-6}).
For any fixed $r\in(\frac{1}{2},1)$,  when $\frac{1}{2}<|a|\leq \frac{1}{2-r}$, by triangle inequality,  we have
$$|1-\langle z,a\rangle|\leq \left|1-\langle \frac{z}{r},a\rangle\right|  + \frac{1-r}{2-r}\leq 2\left|1-\langle \frac{z}{r},a\rangle\right|, \,\, |z|\leq r.$$
When $\frac{1}{2}<|a|\leq \frac{1}{2-r}$, by (\ref{0112-3}) and (\ref{0624-4}),  we have
\begin{align*}
\|T_{g_r}F_{a,p}\|_{A_\om^p}^p
&\approx \int_{\BB\backslash \frac{1}{2}\BB}\left(\int_{\Gamma_u}|F_{a,p}(\eta)|^2|\Re g_r(\eta)|^2\left(1-\frac{|\eta|^2}{|u|^2}\right)^{1-n}dV(\eta)\right)^\frac{p}{2}\om(u)dV(u)\\
&= \int_{\BB\backslash \frac{1}{2}\BB}\left(\int_{\Gamma_{ru}}
|F_{a,p}(\frac{\xi}{r})|^2|\Re g(\xi)|^2\left(1-\frac{|\xi|^2}{|ru|^2}\right)^{1-n}\frac{1}{r^{2n}}dV(\xi)\right)^\frac{p}{2}\om(u)dV(u)\\
&\lesssim\int_{\BB\backslash \frac{1}{2}\BB}\left(\int_{\Gamma_{ru}}|F_{a,p}(\xi)|^2|\Re g(\xi)|^2\left(1-\frac{|\xi|^2}{|ru|^2}\right)^{1-n}dV(\xi)\right)^\frac{p}{2}\om(u)dV(u)  \\
&\approx\int_{r\BB\backslash \frac{r}{2}\BB}\left(\int_{\Gamma_{\zeta}}|F_{a,p}(\xi)|^2|\Re g(\xi)|^2\left(1-\frac{|\xi|^2}{|\zeta|^2}\right)^{1-n}dV(\xi)\right)^\frac{p}{2}
\om(\frac{\zeta}{r})dV(\zeta)   \\
&\approx \int_{\frac{r}{2}}^r \left(\int_{\SS} \left(\int_{\Gamma_{t\eta}}
|F_{a,p}(\xi)|^2|\Re g(\xi)|^2\left(1-\frac{|\xi|^2}{t^2}\right)^{1-n}dV(\xi)\right)^\frac{p}{2}d\sigma(\eta)\right)\om(\frac{t}{r})dt.
\end{align*}
By (\ref{0121-1}) and the similar calculation, when $t\geq \frac{1}{4}$, we have
$$\|(T_gF_{a,p})_t\|_{H^p}^p\approx \int_{\SS} \left(\int_{\Gamma_{t\eta}}
|F_{a,p}(\xi)|^2|\Re g(\xi)|^2\left(1-\frac{|\xi|^2}{t^2}\right)^{1-n}dV(\xi)\right)^\frac{p}{2}d\sigma(\eta).$$
Then
\begin{align}
\|T_{g_r}F_{a,p}\|_{A_\om^p}^p &\lesssim \int_{\frac{r}{2}}^r\|(T_{g}F_{a,p})_t\|_{H^p}^p\om(\frac{t}{r})dt \nonumber\\
&\leq \int_{\frac{r}{2}}^r\|(T_{g}F_{a,p})_{\frac{t}{r}}\|_{H^p}^p\om(\frac{t}{r})dt \nonumber\\
& \leq \|T_gF_{a,p}\|_{A_\om^p}^p   \label{0308-3}\\
&\leq\|T_g\|^P\om(S_a).\nonumber
\end{align}

When $|a|\geq \frac{1}{2-r}$, by (\ref{0112-3}), Lemma \ref{0114-3}, (\ref{0121-2}) and $\gamma$ is large enough, we have
\begin{align*}
\|T_{g_r}F_{a,p}\|_{A_\om^p}^p
&\approx \int_{\BB}\left(\int_{\Gamma_u}|F_{a,p}(\xi)|^2|\Re g_r(\xi)|^2\left(1-\frac{|\xi|^2}{|u|^2}\right)^{1-n}dV(\xi
)\right)^\frac{p}{2}\om(u)dV(u)\\
&\lesssim M^p_\infty(r,\Re g)\int_{\BB}\left(\int_{\Gamma_u}|F_{a,p}(\xi)|^2\left(1-\frac{|\xi|^2}{|u|^2}\right)^{1-n}dV(\xi)
\right)^\frac{p}{2}\om(u)dV(u)\\
&\lesssim \frac{\|T_g\|^p}{(1-|a|^2)^{p}}\int_{\BB}
\left(\int_{\Gamma_u}|F_{a,p}(\xi)|^2\left(1-\frac{|\xi|^2}{|u|^2}\right)^{1-n}dV(\xi)
\right)^\frac{p}{2}\om(u)dV(u).
\end{align*}
Let $\Re^{-1}F_{a,p}(z)=\frac{1}{\frac{\gamma+n}{p}-1}(1-|a|^2)\left(\frac{1-|a|^2}{1-\langle z,a\rangle}\right)^{\frac{\gamma+n}{p}-1}$,
$$J_3=\int_{\BB}
\left(\int_{\Gamma_u\cap \{\xi: |\langle\xi,a \rangle|\geq\frac{1}{4}\}}|F_{a,p}(\xi)|^2\left(1-\frac{|\xi|^2}{|u|^2}\right)^{1-n}dV(\xi)
\right)^\frac{p}{2}\om(u)dV(u),$$
and
$$J_4=\int_{\BB}
\left(\int_{\Gamma_u\cap \{\xi: |\langle\xi,a \rangle|<\frac{1}{4}\}}|F_{a,p}(\xi)|^2\left(1-\frac{|\xi|^2}{|u|^2}\right)^{1-n}dV(\xi)
\right)^\frac{p}{2}\om(u)dV(u).$$
Then, by using (\ref{0112-3}) and  Lemma \ref{1208-4}, we have
\begin{align*}
|\Re(\Re^{-1}F_{a,p})(z)|=|F_{a,p}(z)\langle  z,a\rangle|\approx |F_{a,p}(z)|, \mbox{ when } |\langle z,a\rangle|\geq \frac{1}{4},
\end{align*}
\begin{align*}
J_3\leq \|\Re^{-1}F_{a,p}\|_{A_\om^p}^p \approx (1-|a|)^p \om(S_a),
\end{align*}
\begin{align*}
J_4
&\lesssim (1-|a|)^{n+\gamma}\int_{\BB}
\left(\int_{\Gamma_u}\left(1-\frac{|\xi|^2}{|u|^2}\right)^{1-n}dV(\xi)
\right)^\frac{p}{2}\om(u)dV(u).
\end{align*}
For any $u\in\BB\backslash\{0\}$ and $\xi\in\Gamma_u\backslash\{0\}$, let $r=|\xi|$ and $\eta=\frac{\xi}{|\xi|}$. Then we have
$$
\xi\in\Gamma_u\Leftrightarrow \left|1-\langle \eta, \frac{ru}{|u|^2}\rangle\right|\leq \frac{\alpha}{2}\left(1-\frac{r^2}{|u|^2}\right).
$$
By (\ref{0625-1}), we have
\begin{align*}
\int_{\Gamma_u}\left(1-\frac{|\xi|^2}{|u|^2}\right)^{1-n}dV(\xi)
&=2n\int_0^{|u|}r^{2n-1}\left(1-\frac{|r|^2}{|u|^2}\right)^{1-n}dr\int_{\{\eta\in\SS:\left|1-\langle \eta, \frac{ru}{|u|^2}\rangle\right|\leq \frac{\alpha}{2}\left(1-\frac{r^2}{|u|^2}\right)\}}d\sigma(\eta)\\
&\approx\int_0^{|u|}r^{2n-1}\left(1-\frac{|r|^2}{|u|^2}\right)dr\lesssim 1.
\end{align*}
By Lemma \ref{1210-3}, when $\gamma$ is large enough, we have $J_4\lesssim (1-|a|)^p\om(S_a)$. So, we have
$\|T_{g_r}F_{a,p}\|_{A_\om^p}^p\lesssim\|T_g\|^p\om(S_a)$,
that is, (\ref{0119-6}) holds. The proof  is complete.
\end{proof}

\begin{Theorem}\label{0308-2}
Let $0< p\leq q<\infty$, $\om\in\hD$, $\kappa=\frac{1}{p}-\frac{1}{q}$ and $g\in H(\BB)$.
\begin{enumerate}
  \item [(i)]If $0<n\kappa<1$, then the following conditions are equivalent:
  \begin{enumerate}
    \item [(ia)] $T_g:A_\om^p\to A_\om^q$ is compact;
    \item [(ib)] $M_\infty(r,\Re g)=o\left( \frac{\om^{\kappa}(S_a)}{1-r}\right)$;
    \item [(ic)] $g\in \mathcal{C}_0^{2\kappa+1}(\om^*)$.
  \end{enumerate}
  \item [(ii)] The following conditions are equivalent.
  \begin{enumerate}
    \item [(iia)] $T_g:A_\om^p\to A_\om^p$ is compact;
    \item [(iib)] $g\in \mathcal{C}_0^1(\om^*)$.
  \end{enumerate}
\end{enumerate}
\end{Theorem}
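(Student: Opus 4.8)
The plan is to run the compactness argument in parallel with the boundedness proof of Theorem \ref{0121-4}, replacing every supremum bound by a vanishing one and invoking the compactness criterion of Lemma \ref{1210-2}. For part $(i)$, the implication $(ia)\Rightarrow(ib)$ is precisely Lemma \ref{0114-3}$(ii)$ and $(ib)\Leftrightarrow(ic)$ is Lemma \ref{0114-4}$(ii)$, so the entire content of $(i)$ reduces to $(ic)\Rightarrow(ia)$. Since $\mathcal{C}_0^{2\kappa+1}(\om^*)\subset\mathcal{C}^{2\kappa+1}(\om^*)$, Theorem \ref{0121-4}$(ii)$ already gives that $T_g:A_\om^p\to A_\om^q$ is bounded; hence by Lemma \ref{1210-2} it suffices to prove $\|T_g f_k\|_{A_\om^q}\to 0$ whenever $\{f_k\}$ is bounded in $A_\om^p$ and $f_k\to 0$ uniformly on compact subsets of $\BB$.

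To do this I would reuse the three computations ($q=2$, $q>2$, $0<q<2$) from the proof of Theorem \ref{0121-4}$(ii)$. Fix $\varepsilon>0$; condition $(ic)$ provides $r_0\in(0,1)$ with $\sup_{|a|>r_0}\mu_g(S_a)/(\om(S_a))^{2\kappa+1}<\varepsilon$, where $d\mu_g(z)=|\Re g(z)|^2\om^*(z)dV(z)$. In the base case $q=2$, $(\ref{0112-4})$ gives $\|T_g f_k\|_{A_\om^2}^2\approx\int_\BB|f_k|^2\,d\mu_g$, and as $(ic)$ makes $\mu_g$ a vanishing $2$-Carleson measure for $A_\om^p$, Theorem \ref{0109-7}$(ii)$ and Lemma \ref{1210-2} force this quantity to $0$. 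For $q>2$ the H\"older splitting of Theorem \ref{0121-4}$(ii)$ reduces the problem to showing $\int_\BB|f_k|^{2\beta}\,d\mu_g\to 0$, the factor $\|T_g f_k\|_{A_\om^q}^{q-2}$ being bounded and hence harmlessly absorbed; for $0<q<2$ the passage through $(\ref{0112-3})$ and Theorem \ref{0117-1} reduces it to the product of the bounded factor $\|N(f_k)\|_{L_\om^p}^{(2-q)p/2}$ with $J_1=\int_\BB|f_k|^{2-\tau}\,d\mu_g$. In each case I split the $d\mu_g$-integral as $\int_{r_0\BB}+\int_{\BB\setminus r_0\BB}$: the tail is $\lesssim\varepsilon$ times a bounded power of $\sup_k\|f_k\|_{A_\om^p}$ by the localized estimate above, while the core over the compact set $r_0\BB$ tends to $0$ since $f_k\to 0$ there uniformly. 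Letting $k\to\infty$ and then $\varepsilon\to 0$ yields $(ia)$.

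For part $(ii)$, the implication $(iib)\Rightarrow(iia)$ is the $p=q$ (i.e.\ $\kappa=0$) instance of the scheme above, so the substance is $(iia)\Rightarrow(iib)$. When $p=2$ this is immediate from $(\ref{0112-4})$ and the vanishing part of Theorem \ref{0109-7}. For $p\neq 2$ I would adapt the two delicate arguments behind $(iiia)\Rightarrow(iiib)$ in Theorem \ref{0121-4}$(iii)$: the test-function estimate $(\ref{0119-4})$ for $0<p<2$ and the $N(f)$--H\"older chain culminating in $(\ref{0122-4})$ for $p>2$. The new input is that compactness forces the normalized family $\{F_{a,p}/\|F_{a,p}\|_{A_\om^p}\}$, which is bounded in $A_\om^p$ and tends to $0$ uniformly on compact subsets as $|a|\to 1$, to satisfy $\|T_g F_{a,p}\|_{A_\om^p}=o\big((\om(S_a))^{1/p}\big)$ by Lemma \ref{1210-2}. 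Feeding this into $(\ref{0119-4})$--$(\ref{0119-6})$, respectively into the chain ending at $(\ref{0122-4})$, and letting $|a|\to 1$ upgrades the uniform bound $\sup_a\mu_g(S_a)/\om(S_a)\lesssim\|T_g\|^2$ to $\lim_{|a|\to1}\mu_g(S_a)/\om(S_a)=0$, i.e.\ $g\in\mathcal{C}_0^1(\om^*)$.

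The main obstacle will be the $0<p<2$ subcase of $(iia)\Rightarrow(iib)$. The inequality $(\ref{0119-4})$ carries $\mu_g(S_a)$ on both sides, so one must combine the decay of $\|T_g F_{a,p}\|_{A_\om^p}$ with the a priori finite bound on $\sup_a\mu_g(S_a)/\om(S_a)$ (furnished by boundedness) and then solve for $\mu_g(S_a)/\om(S_a)$; moreover the auxiliary claim $(\ref{0119-6})$ must be re-inspected to confirm that its constants are independent of $a$, so that passing to the limit $|a|\to 1$ is legitimate. Handling the truncations $g_r$ and the attendant Fatou-type limit exactly as in the boundedness proof, but now tracking that the estimate genuinely decays rather than merely staying finite, is the delicate point.
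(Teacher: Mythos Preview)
Your outline is correct and follows the paper's own proof almost verbatim: the reductions $(ia)\Rightarrow(ib)\Leftrightarrow(ic)$ via Lemmas~\ref{0114-3} and~\ref{0114-4}, the three-case treatment $q=2$, $q>2$, $0<q<2$ of $(ic)\Rightarrow(ia)$ using the vanishing Carleson condition, and the test-function arguments via $(\ref{0122-4})$ and $(\ref{0119-4})$ for $(iia)\Rightarrow(iib)$ are exactly what the paper does.

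Your anticipated ``main obstacle'' in the $0<p<2$ subcase of $(iia)\Rightarrow(iib)$ is, however, illusory, and the paper's argument there is much shorter than you expect. Since compactness of $T_g$ implies boundedness, Theorem~\ref{0121-4}$(iii)$ already furnishes $g\in\mathcal{C}^1(\om^*)$, i.e.\ $\sup_{a\in\BB}\mu_g(S_a)/\om(S_a)<\infty$. With this a~priori bound in hand, inequality $(\ref{0119-4})$ applies \emph{directly to $g$}---no truncation $g_r$ is needed---and after dividing through by $(\mu_g(S_a))^{1/\tau_2^\p}$ and by $\om(S_a)$ it gives
\[
\frac{\mu_g(S_a)}{\om(S_a)}\lesssim \|T_g f_{a,p}\|_{A_\om^p}^{p}\left(\sup_{b\in\BB}\frac{\mu_g(S_b)}{\om(S_b)}\right)^{\tau_2/(\tau_3\tau_1^\p)},
\]
where $f_{a,p}=F_{a,p}/\|F_{a,p}\|_{A_\om^p}$. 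The second factor on the right is a fixed finite constant, and the first tends to $0$ as $|a|\to 1$ by Lemma~\ref{1210-2}; that is the whole proof. The claim $(\ref{0119-6})$ and the Fatou passage from $g_r$ to $g$ were required in the boundedness proof \emph{only} because the finiteness of $\sup_a\mu_g(S_a)/\om(S_a)$ was not yet known; here it is, so none of that machinery is needed.
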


\begin{proof}By Lemmas  \ref{0114-3} and \ref{0114-4},  we have  $(ia)$$\Rightarrow$$(ib)$$\Leftrightarrow$$(ic)$.
Let $d\mu_g(z)=|\Re g(z)|^2\om^*(z)dV(z)$. First, we prove  $(i)$.

Suppose  that $(ic) $ holds and  $q=2$. Then  $\mu_g$ is a vanishing 2-Carleson measure for $A_\om^p$.
Using (\ref{0112-4}), we have
$$\|T_g f\|_{A_\om^2}^2\approx\int_{\BB}|f(z)|^2|\Re g(z)|^2\om^*(z)dV(z).$$
So, $T_g:A_\om^p\to A_\om^2$ is compact by Theorem \ref{0109-7}.

Suppose that $(ic)$ holds.  By Theorem \ref{0121-4}, $T_g:A_\om^p\to A_\om^q$ is bounded.
For every $\varepsilon>0$, there is a $r\in(0,1)$, such that
$$\sup\limits_{|a|\geq r}\frac{\mu_g(S_a)}{(\om(S_a))^{2\kappa+1}}<\varepsilon.$$
For any measurable subset $E$ of $\BB$,  define $\mu_{g,r}(E)=\mu_g(E\cap(\BB\backslash r\BB))$.
By  (\ref{0122-2}),
$$\sup\limits_{a\in\BB}\frac{\mu_{g,r}(S_a)}{(\om(S_a))^{2\kappa+1}}\lesssim\varepsilon.$$

When   $q>2$, let $\beta=\frac{(2\kappa+1)q}{2\kappa q+2}$ and $\beta^\p=\frac{(2\kappa+1)q}{q-2} $.
For any $0<r<1$, by (\ref{0112-4}),  H\"{o}lder's inequality and Theorem \ref{0109-7}, we have
\begin{align*}
\|T_gf\|_{A_\om^q}^q
\approx& \left(\int_{r\BB}+\int_{\BB\backslash r\BB}\right) |f(z)|^2|T_gf(z)|^{q-2}|\Re g(z)|^2\om^*(z)dV(z)   \\
\leq & \left(\int_{r\BB}|f(z)|^{2\beta}d\mu_g(z)\right)^\frac{1}{\beta}
    \left(\int_{r\BB}|T_g f(z)|^{(q-2)\beta^\p}d\mu_g(z)\right)^\frac{1}{\beta^\p}  \\
    &+\left(\int_{\BB\backslash r\BB}|f(z)|^{2\beta}d\mu_g(z)\right)^\frac{1}{\beta}
    \left(\int_{\BB\backslash r\BB}|T_g f(z)|^{(q-2)\beta^\p}d\mu_g(z)\right)^\frac{1}{\beta^\p}  \\
\lesssim & \sup_{|z|<r}|f(z)|^2     \left(\int_{\BB}|T_g f(z)|^{(q-2)\beta^\p}d\mu_g(z)\right)^\frac{1}{\beta^\p}  \\
    &+\left(\int_{\BB}|f(z)|^{2\beta}d\mu_g(z)\right)^\frac{1}{\beta}
    \left(\int_{\BB}|T_g f(z)|^{(q-2)\beta^\p}d\mu_{g,r}(z)\right)^\frac{1}{\beta^\p}  \\
 \lesssim &  \sup_{|z|\leq r}|f(z)|^2  \cdot         \|T_gf\|_{A_{\om}^\frac{(q-2)\beta^\p}{2\kappa+1}}^{q-2}
 +\varepsilon^\frac{1}{\beta^\p}  \|f\|_{A_{\om}^\frac{2\beta}{2\kappa+1}}^2         \|T_gf\|_{A_{\om}^\frac{(q-2)\beta^\p}{2\kappa+1}}^{q-2}  \\
 =&  \sup_{|z|\leq r}|f(z)|^2  \cdot         \|T_gf\|_{A_\om^q}^{q-2}
 +\varepsilon^\frac{1}{\beta^\p}  \|f\|_{A_{\om}^p}^2         \|T_gf\|_{A_{\om}^q}^{q-2}  .
\end{align*}
Then by Lemma \ref{1210-2}, $(ia)$ holds.

When $0<q<2$, by (\ref{0122-3}) and (\ref{0117-5}), we have
\begin{align*}
\|T_g f\|_{A_\om^q}^q
&\lesssim \|f\|_{A_\om^p}^\frac{(2-q)p}{2} \left(\int_{\BB}|f(z)|^{2-\tau}|\Re g(z)|^2\om^*(z)dV(z)\right)^\frac{q}{2}   \\
&\lesssim \|f\|_{A_\om^p}^\frac{(2-q)p}{2} \left(\sup_{|z|<r}|f(z)|^{2-\tau}+\int_{\BB}|f(z)|^{2-\tau}d\mu_{g,r}(z)\right)^\frac{q}{2}  \\
&\lesssim \|f\|_{A_\om^p}^\frac{(2-q)p}{2} \left(\sup_{|z|<r}|f(z)|^{2-\tau}+\varepsilon \|f\|_{A_\om^\frac{2-\tau}{2\kappa+1}}^{2-\tau}\right)^\frac{q}{2}           \\
&= \|f\|_{A_\om^p}^\frac{(2-q)p}{2} \left(\sup_{|z|<r}|f(z)|^{2-\tau}+\varepsilon \|f\|_{A_\om^p}^{2-\tau}\right)^\frac{q}{2}.
\end{align*}
Here $\tau=\frac{(2-q)p}{q}$. By Lemma \ref{1210-2}, $T_g:A_\om^p\to A_\om^q$ is compact.
So, we finish the proof of $(ic)$$\Rightarrow$$(ia)$.

When $p=2$, $(iia)$$\Leftrightarrow$$(iib)$ is obvious.

When $p\neq 2$, by the proof of $(ic)$$\Rightarrow$$(ia)$, we get $(iib)$$\Rightarrow$$(iia)$.

Suppose $p>2$ and $(iia)$ holds. By (\ref{0122-4}), we have
\begin{align*}
\int_{\BB}|f(z)|^p|\Re g(z)|^2\om^*(z)d\nu(z)
\lesssim \|f\|_{A_\om^p}^{p-2}\|T_g f\|_{A_\om^p}^2.
\end{align*}
Let $f_{a,p}(z)=\frac{F_{a,p}(z)}{\|F_{a,p}\|_{A_\om^p}}$ for some $\gamma$ is large enough. Then we have
\begin{align*}
\frac{\mu_g(S_a)}{\om(S_a)}
=\int_{S_a} |f_{a,p}|^pd\mu_g(z)
\leq \int_{\BB}|f_{a,p}(z)|^p|\Re g(z)|^2\om^*(z)d\nu(z)
\lesssim \|T_g f_{a,p}\|_{A_\om^p}^2.
\end{align*}
By Lemma \ref{1210-2}, $(iib)$ holds.

Suppose $0<p<2$ and $(iia)$ holds.
Let $f_{a,p}(z)=\frac{F_{a,p}(z)}{\|F_{a,p}\|_{A_\om^p}}$ for some $\gamma$ is large enough.
Then $\sup_{a\in\BB}\frac{\mu_g(S_a)}{\om(S_a)}<\infty$.
By (\ref{0119-4}), we have
\begin{align*}
\frac{\mu_g(S_a)}{\om(S_a)}\lesssim \|T_g f_{a,p}\|_{A_\om^p}^{p}
\left(\sup_{a\in\BB}\frac{\mu_g(S_a)}{\om(S_a)}\right)^{\frac{\tau_2}{\tau_3\tau_1^\p}}.
\end{align*}
By Lemma \ref{1210-2}, $(iib)$ holds. The proof is complete.
\end{proof}\msk

\section{inclusion relations about $\mathcal{C}^1(\om^*)(\mathcal{C}_0^1(\om^*))$}

 In this section, we discuss   the inclusion relationship between $\mathcal{C}^1(\om^*)(\mathcal{C}_0^1(\om^*))$ and some other function spaces,
such as $\B(\B_0)$ and $BMOA (VMOA)$.

 Recall that
a function $f\in H(\BB)$ is said to belong to the Bloch space, denoted by ${\B}={\mathcal B}(\BB)$, if
  \begr\|f\|_{{\B}}=|f(0)|+ \sup_{z\in \BB}(1-|z|^2)\left|\Re f(z)\right|<\infty.   \nonumber
\endr
 It is well known that ${\mathcal B}$ is a Banach space with the above norm. Let $\mathcal{B}_0$, called the
little Bloch space, denote the subspace of $\mathcal{B}$ consisting
of those $f \in \mathcal{B}$ for which $$\lim_{|z|\to 1}(1-|z|^2)|\Re f(z)|= 0.\nonumber$$
Recall that
$$S^*(\xi,r)=\{z\in\BB:|1-\langle z,\xi\rangle|<r\}, \,\,\mbox{ for }\,\,\xi\in\SS.$$
A function $f \in H(\BB)$ is said to belong to the space $BMOA$ if and only if
  $$\sup_{\xi\in\SS,0<r<\delta}\frac{\int_{S^*(\xi,r)}(1-|z|^2)|\Re f(z)|^2dV(z)}{r^{2n}}<\infty$$
  for some (equivalently, for any) $\delta>0$.
   Let $VMOA$ denote the subspace of $BMOA$ for which
  $$\lim_{r\to 1}\sup_{\xi\in\SS}\frac{\int_{S^*(\xi,r)}(1-|z|^2)|\Re f(z)|^2dV(z)}{r^{2n}}=0.$$
More information about $\B$, BMOA and VMOA can be found in \cite{Zk2005}.

\begin{Proposition}\label{0117-3}
\begin{enumerate}[(i)]
  \item If $\om\in\hD$, then $\mathcal{C}^1(\om^*)\subset A_\om^p$ for all $p>0$.
  \item If $\om\in\hD$, then $BMOA\subset \mathcal{C}^1(\om^*)\subset \B$  and $VMOA\subset \mathcal{C}_0^1(\om^*)\subset \B_0$.
  \item If $\om\in\hD$, with the norm $\|\cdot\|_{\mathcal{C}^1(\om^*)}$, ${C}^1(\om^*)$ is a Banach space and $\mathcal{C}_0^1(\om^*)$ is a closed subspace of ${C}^1(\om^*)$.
  \item If $\om\in \R$, then $\mathcal{C}^1(\om^*)=\B$  and $\mathcal{C}_0^1(\om^*)=\B_0$.
  \item If $\om\in\I$, then $\mathcal{C}^1(\om^*)\subsetneq\B$  and $\mathcal{C}_0^1(\om^*)\subsetneq\B_0$.
  \item If $\om\in \I$ and both  $\om$ and $\frac{\hat{\om}(r)}{(1-r)\om(r)}$ are increasing on $[0,1)$, then $VMOA\subsetneq \mathcal{C}_o^1(\om^*)$ and  $BMOA\subsetneq \mathcal{C}^1(\om^*)$.
\end{enumerate}
\end{Proposition}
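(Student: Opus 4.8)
The plan is to treat the six assertions in turn, reusing the Carleson‑measure machinery of Theorem \ref{0109-7}, the equivalent norms of Theorem \ref{0113-2}, and the weight estimates of Lemma \ref{1210-3}. Throughout write $d\mu_g(z)=|\Re g(z)|^2\om^*(z)\,dV(z)$, so that $g\in\mathcal{C}^1(\om^*)$ means exactly $\sup_{a}\mu_g(S_a)/\om(S_a)<\infty$, i.e. $\mu_g$ is a $2$-Carleson measure for $A_\om^2$ and, since the class depends only on $\kappa=q/p$, a $p$-Carleson measure for $A_\om^p$ for every $p>0$. For (i) I would first take $a=0$: since $S_0=\BB$, the defining inequality gives $\mu_g(\BB)\le\|g\|_{\mathcal{C}^1(\om^*)}\,\om(\BB)<\infty$, and then (\ref{0624-1}) with $p=2$ yields $\|g-g(0)\|_{A_\om^2}^2\approx\mu_g(\BB)<\infty$, so $g\in A_\om^2$. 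Because $\om\,dV$ is a finite measure, Hölder gives $A_\om^2\subset A_\om^s$ for every $0<s\le2$. For $p>2$ I would bootstrap: by (\ref{0112-4}),
\[
\|g-g(0)\|_{A_\om^p}^p\approx\int_\BB|g(z)-g(0)|^{p-2}\,d\mu_g(z)\lesssim\|g-g(0)\|_{A_\om^{p-2}}^{p-2},
\]
the last step because $\mu_g$ is a $(p-2)$-Carleson measure for $A_\om^{p-2}$. Iterating in steps of two starting from the range $(0,2]$ then covers all $p>0$.

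For the inclusions in (ii), the containment $\mathcal{C}^1(\om^*)\subset\B$ follows from the sub‑mean‑value estimate for $|\Re g|^2$ used in the proof of Lemma \ref{0114-4}: over a fixed Bergman ball $D(a,r_0)\subset S_{\beta_3(a)}$ one gets $(1-|a|)^{n+1}\om^*(a)|\Re g(a)|^2\lesssim\mu_g(S_{\beta_3(a)})\lesssim\|g\|_{\mathcal{C}^1(\om^*)}\om(S_a)$, and inserting $\om^*(a)\approx(1-|a|)\hat\om(a)$ and $\om(S_a)\approx(1-|a|)^n\hat\om(a)$ from Lemma \ref{1210-3} collapses this to $(1-|a|)|\Re g(a)|\lesssim\|g\|_{\mathcal{C}^1(\om^*)}^{1/2}$, i.e. $g\in\B$. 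For $BMOA\subset\mathcal{C}^1(\om^*)$ I would use $\om^*(z)\approx(1-|z|)\hat\om(z)$ together with $\hat\om(z)\le\hat\om(a)$ on $S_a$ and the inclusion $S_a\subset S^*(a/|a|,c(1-|a|))$ of Proposition \ref{0118-2}; writing $\hat\om(z)=\int_{|z|}^1\om$ and applying Fubini reduces $\mu_g(S_a)$ to the standard Carleson quantity $\int_{S^*}(1-|z|^2)|\Re g|^2\,dV\lesssim\|g\|_{BMOA}^2(1-|a|)^{2n}$, giving $\mu_g(S_a)/\om(S_a)\lesssim(1-|a|)^n\|g\|_{BMOA}^2$. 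The vanishing versions $\mathcal{C}_0^1(\om^*)\subset\B_0$ and $VMOA\subset\mathcal{C}_0^1(\om^*)$ follow by replacing $\sup_a$ with $\limsup_{|a|\to1}$ in the same estimates. Assertion (iii) is then routine: the pointwise bound just proved shows $\|\cdot\|_{\mathcal{C}^1(\om^*)}$ dominates the Bloch norm (once the Carleson part of the functional is read with a square root to make it homogeneous), so a Cauchy sequence in $\mathcal{C}^1(\om^*)$ converges in $\B$, hence uniformly on compacta to some $g\in H(\BB)$; Fatou's lemma applied to $\mu_{g_k-g_j}(S_a)$ shows $g\in\mathcal{C}^1(\om^*)$ with $g_k\to g$ in norm, and the same limiting argument preserves the vanishing condition, so $\mathcal{C}_0^1(\om^*)$ is closed.

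For (iv) only the reverse inclusion $\B\subset\mathcal{C}^1(\om^*)$ needs proof. When $\om\in\R$ one has $\hat\om(r)\approx(1-r)\om(r)$, so by Lemma \ref{1210-3}, $\om^*(r)\approx(1-r)\hat\om(r)\approx(1-r)^2\om(r)$. Hence for $g\in\B$, $|\Re g(z)|^2\om^*(z)\lesssim\|g\|_\B^2(1-|z|)^{-2}\cdot(1-|z|)^2\om(z)=\|g\|_\B^2\om(z)$, and integrating over $S_a$ gives $\mu_g(S_a)\lesssim\|g\|_\B^2\om(S_a)$, that is $g\in\mathcal{C}^1(\om^*)$. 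The equality $\B_0=\mathcal{C}_0^1(\om^*)$ follows because $(1-|z|)^2|\Re g(z)|^2\to0$ forces $\mu_g(S_a)/\om(S_a)\to0$ as $|a|\to1$.

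The genuine difficulty is the strictness in (v) and (vi), and I expect these two constructions to be the main obstacle; everything else reduces to the results already established. For (v), the identity $\om^*\approx(1-r)^2\om$ is exactly what drives (iv), and its failure for $\om\in\I$ — where $\om^*(r)/\big((1-r)^2\om(r)\big)\approx\hat\om(r)/\big((1-r)\om(r)\big)\to\infty$ — must be converted into an actual Bloch function $g$ with $\sup_a\mu_g(S_a)/\om(S_a)=\infty$. No aligned radial test function suffices, since for such functions the pointwise lower bound keeps the ratio bounded; so I would build $g$ from a lacunary series in one coordinate, as in the disk theory of \cite{PjaRj2014book,Pja2015}, choosing bounded coefficients so that the $\om$-weighted energy over a sequence of blocks diverges while the Bloch norm stays finite, and the vanishing analogue then yields $\mathcal{C}_0^1(\om^*)\subsetneq\B_0$. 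For (vi) the monotonicity hypotheses on $\om$ and on $\hat\om/\big((1-r)\om\big)$ are used precisely to produce a multi‑scale example $g\in\mathcal{C}^1(\om^*)\setminus BMOA$: the block condition defining $\mathcal{C}^1(\om^*)$ controls only one scale per block, whereas $BMOA$ demands control of $\int_{S^*(\xi,r)}(1-|z|^2)|\Re g|^2\,dV/r^{2n}$ at all small $r$, and the monotonicity lets one distribute mass at finer and finer scales so that the $\om^*$-averages remain bounded while the unweighted $BMOA$ averages blow up; the same device, arranged to vanish at the coarsest scales, separates $VMOA$ from $\mathcal{C}_0^1(\om^*)$.
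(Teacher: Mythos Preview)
Your approach matches the paper's essentially point for point: (i) by bootstrapping through $A_\om^2$ via the Carleson condition, (ii) via the sub-mean-value estimate (the paper cites Lemma~\ref{0114-3} and Theorems~\ref{0121-4}--\ref{0308-2}, which package the same argument) together with the block--tube comparison of Proposition~\ref{0118-2}, (iii) by Fatou, and (v)--(vi) by reduction to the disk constructions of \cite{PjaRj2014book}, which is exactly what the paper does (it simply cites that reference without reproducing the examples).

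Two remarks. In (ii) your conclusion $\mu_g(S_a)/\om(S_a)\lesssim(1-|a|)^n\|g\|_{BMOA}^2$ is too strong: it would force $BMOA\subset\mathcal{C}_0^1(\om^*)\subset\B_0$, which fails already for $g(z)=\log(1-z_1)$. The slip comes from the exponent $r^{2n}$ in the paper's displayed BMOA definition; the correct Carleson exponent for $(1-|z|^2)|\Re g|^2\,dV$ is $r^n$, and with that your estimate collapses to a bounded ratio, exactly as in the paper's own computation. In (iv) your pointwise inequality $|\Re g|^2\om^*\lesssim\|g\|_\B^2\,\om$ (using $\om^*(r)\approx(1-r)\hat\om(r)\approx(1-r)^2\om(r)$ for $\om\in\R$) is slightly cleaner than the paper's route, which instead pulls in the auxiliary fact from \cite{PjaRj2014book} that $\om(r)/(1-r)^\beta$ is eventually decreasing for some $\beta>-1$ before integrating; your version avoids that detour.
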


\begin{proof} $(i)$. Suppose $g\in\mathcal{C}^1(\om^*)$.
By Theorem \ref{0113-2},
\begin{align}\label{0124-1}
\|g-g(0)\|_{A_\om^2}^2\approx \int_{\BB}|\Re g(z)|^2\om^*(z)dV(z)\lesssim \|h\|_{A_\om^2}^2,\,
,\mbox{here}\,\,h(z) \equiv 1.
\end{align}
So, $g\in A_\om^2$. Similarly, we have
$$
\|g-g(0)\|_{A_\om^4}^4\approx \int_{\BB}|g(z)-g(0)|^2|\Re g(z)|^2\om^*(z)dV(z)\lesssim \|g-g(0)\|_{A_\om^2}^2.
$$
By mathematical induction, for all $k\in\N$, $g-g(0)\in A_{\om}^{2k}$. So, $\mathcal{C}^1(\om^*)\subset A_\om^p$ for all $p>0$.

$(ii)$. By Theorems \ref{0121-4}, \ref{0308-2} and Lemma \ref{0114-3}, we have $\mathcal{C}^1(\om^*)\subset \B$  and $\mathcal{C}_0^1(\om^*)\subset \B_0$.
Suppose $g\in BMOA$, let $d\mu_g^*(z)=(1-|z|^2)|\Re g(z)|^2dV(z).$
Then
$$
M:=\sup\left\{\frac{\mu_g^*(S^*(\xi,r))}{r^{n}}:\xi\in\SS,0<r<\delta\right\}<\infty,
$$
where $\delta$ is any fixed positive constant.
By Proposition \ref{0118-2},
for any $a\in \BB$ with $|a|>\frac{1}{2}$, there are $\xi\in\SS$ and  $r=2(1-|a|)$, such that $S_a\subset S^*(\xi,r)$.
By Lemma \ref{1210-3}, we have
\begin{align*}
\frac{\int_{S_a}|\Re g(z)|^2\om^*(z)dV(z)}{\om(S_a)}
&\lesssim \frac{\hat{\om}(a)\int_{S_a}|\Re g(z)|^2(1-|z|)dV(z)}{(1-|a|)^n\hat{\om}(a)}\\
&\lesssim \frac{\int_{S^*(\xi,r)}|\Re g(z)|^2(1-|z|^2)dV(z)}{r^n}\leq M.
\end{align*}
So, $g\in \mathcal{C}^1(\om^*)$. That is $BMOA\subset \mathcal{C}^1(\om^*)$.
Similarly, by Theorem 5.19 in \cite{Zk2005}, we have $VMOA\subset \mathcal{C}_0^1(\om^*)$.

$(iii)$. Suppose that $\{g_k\}$ is a Cauchy's sequence in $\mathcal{C}^1(\om^*)$.
By (\ref{0124-1}) and Theorem \ref{0109-7}, $\{g_k\}$ is a Cauchy's sequence in $A_\om^2$.
Then we have $g\in A_\om^2$ such that $\lim\limits_{k\to\infty}\|g_k-g\|_{A_\om^2}=0.$
By Theorem \ref{0113-2}, Fatou's Lemma and Theorem \ref{0109-7}, for any $f\in A_\om^2$, we have
\begin{align*}
\|T_gf\|_{A_\om^2}^2
&\approx \int_{\BB} |f(z)|^2|\Re g(z)|^2\om^*(z)dV(z)\\
&=\int_{\BB} |f(z)|^2\liminf_{k\to\infty}|\Re g_k(z)|^2\om^*(z)dV(z)  \\
&\leq \liminf_{k\to\infty}\int_{\BB} |f(z)|^2|\Re g_k(z)|^2\om^*(z)dV(z)\\
&\lesssim \liminf_{k\to\infty} \|g_k\|_{\mathcal{C}^1(\om^*)}\|f\|_{A_\om^2}^2.
\end{align*}
So, $T_g:A_\om^2\to A_\om^2$ is bounded. Then $g\in \mathcal{C}^1(\om^*)$. Similarly, for all $f\in A_\om^2$, we have
\begin{align*}
\int_{\BB}|f(z)|^2|(\Re g-\Re g_j)(z)|^2\om^*(z)dV(z)\approx &\|(T_g-T_{g_j})f\|_{A_\om^2}^2  \\
\lesssim & \liminf_{k\to\infty} \|g_j-g_k\|_{\mathcal{C}^1(\om^*)}\|f\|_{A_\om^2}^2.
\end{align*}
By Theorem \ref{0109-7}, $\lim\limits_{j\to\infty}\|g-g_j\|_{\mathcal{C}^1(\om^*)}=0$.
So, $\mathcal{C}^1(\om^*)$ is a Banach space.

Suppose $\{g_k\}$ is a Cauchy's sequence in $\mathcal{C}_0^1(\om^*)$. Then there exists $g\in\mathcal{C}^1(\om^*)$ such that
$\lim_{k\to\infty}\|g_k-g\|_{\mathcal{C}^1(\om^*)}=0$.
Let $\{f_j\}$ be a bounded sequence in $A_\om^2$ such that $\{f_j\}$ converges to 0 uniformly on compact subsets of $\BB$.
By Theorems \ref{0109-7} and \ref{0113-2}, we have
\begin{align*}
\|T_g f_j\|_{A_\om^2}
\leq \|T_{g-g_k}f_j\|_{A_\om^2}+\|T_{g_k}f_j\|_{A_\om^2}
\lesssim \|g-g_k\|_{\mathcal{C}^1(\om^*)}^\frac{1}{2}\|f_j\|_{A_\om^2}+\|T_{g_k}f_j\|_{A_\om^2}.
\end{align*}
For any given $\varepsilon>0$, we can choose a $k\in\N$ such that $ \|g-g_k\|_{\mathcal{C}^1(\om^*)}<\varepsilon^2$.
By Lemma \ref{1210-2},
$$\lim_{j\to\infty}\|T_{g_k} f_j\|_{A_\om^2}
\lesssim \varepsilon \sup_{j\geq 1}\left\{\|f_j\|_{A_\om^2}\right\}.$$ Then $T_g:A_\om^2\to A_\om^2$ is compact. So, $g\in \mathcal{C}_0^1(\om^*)$.
That is,  $\mathcal{C}_0^1(\om^*)$ is a closed subspace of ${C}^1(\om^*)$.

$(iv)$. Suppose $\om\in \R$. By observation (v) after Lemma 1.1 in \cite{PjaRj2014book},
there exists $\beta>-1$ and $\delta\in (0,1)$, such that
$\frac{\om(r)}{(1-r)^\beta}$ is decreasing on $[\delta,1)$. Without loss of generality, let $\delta=0$.
Then for all $g\in\B$ and $a\in\BB$ such that $|a|>\frac{1}{2}$, by Lemmas \ref{1212-1} and \ref{1210-3}, we have
\begin{align*}
\int_{S_a}|\Re g(z)|^2\om^*(z)dV(z)
&\approx \int_{S_a}|\Re g(z)|^2(1-|z|)^{2+\beta}\frac{\om(a)}{(1-|z|)^\beta}dV(z)   \\
&\leq \frac{\|g\|_{\B}^2  \om(a) }{(1-|a|)^\beta} \int_{S_a}(1-|z|)^{\beta}dV(z)  \\
&\approx  \frac{\|g\|_{\B}^2  \om(a) }{(1-|a|)^\beta} \int_{S_a}(1-|z|)^{\beta}dV(z)\\
&\approx \|f\|_{\B}\om(S_a).
\end{align*}
Then $\B\subset \mathcal{C}^1(\om^*)$. So, $\B= \mathcal{C}^1(\om^*)$. Similarly, $\B_0= \mathcal{C}_0^1(\om^*)$.

$(v)$ and $(vi)$ have been proved in \cite{PjaRj2014book} when $n=1$, so they also hold for $n>1$. The proof is complete.
\end{proof}

\begin{Proposition}
Let $\om\in\hD$ and $g\in\mathcal{C}^1(\om^*)$. Then the following statements are equivalent.
\begin{enumerate}[(i)]
  \item $g\in C_0^1(\om^*)$;
  \item $\lim\limits_{r\to 1}\|g-g_r\|_{\mathcal{C}^1(\om^*)}=0$, here $g_r(z)=g(rz)$;
  \item There is a sequence of polynomials $\{p_k\}$ such that $\lim\limits_{r\to 1}\|g-p_k\|_{\mathcal{C}^1(\om^*)}=0$.
\end{enumerate}

\begin{proof}$(i)$$\Rightarrow$$(ii)$. Suppose $g\in C_0^1(\om^*)$.
Let $\gamma$ be large enough and
$$f_{a,2}(z)=\frac{F_{a,2}(z)}{(\om(S_a))^\frac{1}{2}}.$$
Then Lemma \ref{1210-2}, Theorems \ref{0109-7} and \ref{0113-2} yield
\begin{align}\label{0124-3}
\lim_{|a|\to 1}\|T_g(f_{a,2})\|_{A_\om^2}^2\approx \lim_{|a|\to 1} \int_{\BB}|f_{a,2}(z)|^2|\Re g(z)|^2\om^*(z)dV(z)=0
\end{align}
and for any $r\in(0,1)$,
\begin{align}\label{0308-7}
\frac{\int_{S_a}|(\Re g-\Re g_r)(z)|^2\om^*(z)dV(z)}{\om(S_a)}\lesssim  \|T_{g-g_r}f_{a,2}\|_{A_\om^2}^2.
\end{align}
By (\ref{0124-3}) and $(ii)$  in Proposition \ref{0117-3}, for any $\varepsilon>0$, there is a $r_0\in (\frac{1}{2},1)$ such that
\begin{align}\label{0308-4}
\|T_g(f_{a,2})\|_{A_\om^2}<\varepsilon,\,\mbox{  and }\, (1-|a|)|\Re g(a)|<\varepsilon, \,\mbox{ when }\,|a|>r_0.
\end{align}
By  (\ref{0308-3}), if  $r_0<|a|< \frac{1}{2-r}$, we have
\begin{align}\label{0308-5}
\|T_{g-g_r}f_{a,2}\|_{A_\om^2}^2
\lesssim \|T_{g}f_{a,2}\|_{A_\om^2}^2  +\|T_{g_r}f_{a,2}\|_{A_\om^2}^2
\lesssim  \|T_g f_{a,2}\|_{A_\om^2}^2\leq \varepsilon^2.
\end{align}
If $|a|>\max\left\{r_0, \frac{1}{2-r}\right\}$, by (\ref{0112-4}), we have
\begin{align*}
\|T_{g-g_r}f_{a,2}\|_{A_\om^2}^2
&\lesssim \|T_{g}f_{a,2}\|_{A_\om^2}^2  +\|T_{g_r}f_{a,2}\|_{A_\om^2}^2  \\
&\leq \varepsilon^2 + \int_{\BB}|\Re g_r(z)|^2|f_{a,p}(z)|^2\om^*(z)dV(z)  \\
&\leq  \varepsilon^2 + M_\infty^2(r,\Re g)\int_{\BB}|f_{a,p}(z)|^2\om^*(z)dV(z).
\end{align*}
By  Theorem 1.12 in \cite{Zk2005} and Lemma \ref{1210-3}, if $\gamma$ is large enough,  we have
\begin{align*}
\int_{\BB}|f_{a,p}(z)|^2\om^*(z)dV(z)
&=\frac{(1-|a|^2)^{\gamma+n}}{\om(S_a)}\int_\BB \frac{\om^*(z)}{|1-\langle z,a\rangle|^{\gamma+n}}dV(z)  \\
&\lesssim \frac{(1-|a|^2)^{\gamma+n}\om^*(a)}{(1-|a|^2)^{\gamma-1}\om(S_a)}
\approx {(1-|a|^2)^2}.
\end{align*}
So, when  $|a|>\max\left\{r_0, \frac{1}{2-r}\right\}$, by (\ref{0308-4}),
\begin{align}\label{0308-6}
\|T_{g-g_r}f_{a,2}\|_{A_\om^2}^2
\lesssim \varepsilon^2 +(1-|a|^2)^2 M_\infty^2(2-\frac{1}{|a|},\Re g)
\lesssim \varepsilon^2.
\end{align}
By (\ref{0308-7}), (\ref{0308-5}) and (\ref{0308-6}), we obtain
$$
\sup_{|a|>r_0}\frac{\int_{S_a}|(\Re g-\Re g_r)(z)|^2\om^*(z)dV(z)}{\om(S_a)}\lesssim \varepsilon^2.
$$
When $|a|\leq r_0$, we have
\begin{align*}
\frac{\int_{S_a}|(\Re g-\Re g_r)(z)|^2\om^*(z)dV(z)}{\om(S_a)}
\leq&  \frac{\|\Re g-\Re g_r\|_{A_{\om^*}^2}^2}{\om(S_{r_0})}.
\end{align*}
So, there is a $r_1\geq r_0$, such that
$$\sup_{r>r_1}\frac{\|\Re g-\Re g_r\|_{A_{\om^*}^2}^2}{\om(S_{r_0})}\leq \varepsilon^2.$$
Therefore,
$$
\sup_{a\in\BB, r>r_1}\frac{\int_{S_a}|(\Re g-\Re g_r)(z)|^2\om^*(z)dV(z)}{\om(S_a)}\lesssim \varepsilon^2.
$$
Then $(ii)$ holds.

$(ii)$$\Rightarrow$$(iii)$. For any $n\in\N$, there is a polynomial $p_n$ such that $$\|\Re g_{1-\frac{1}{n}}-\Re p_n\|_{H^\infty}<\frac{1}{n}.  $$
 Since
\begin{align*}
\|g-p_n\|_{\mathcal{C}^1(\om^*)}\leq& \|g-g_{1-\frac{1}{n}}\|_{\mathcal{C}^1(\om^*)} +\|g_{1-\frac{1}{n}}-p_n\|_{\mathcal{C}^1(\om^*)} \\
\lesssim&  \|g-g_{1-\frac{1}{n}}\|_{\mathcal{C}^1(\om^*)} +\|\Re g_{1-\frac{1}{n}}-\Re p_n\|_{H^\infty}^2,
\end{align*}
we obtain $(iii)$.

$(iii)$$\Rightarrow$$(i)$. For any polynomial $p_n$, we have $\|\Re p_n\|_{H^\infty}<\infty$.
 Then by Lemmas \ref{1212-1} and \ref{1210-3}, for $|a|>\frac{1}{2}$ we have
\begin{align*}
\frac{\int_{S_a}|\Re p_n(z)|^2\om^*(z)dV(z)}{\om(S_a)} &\lesssim \|\Re p_n\|_{H^\infty}^2 \frac{(1-|a|)\hat{\om}(a)\int_{S_a}dV(z)}{\om(S_a)}\\
&\approx (1-|a|)^2 \|\Re p_n\|_{H^\infty}^2.
\end{align*}
So, $p_n\in \mathcal{C}_0^1(\om^*)$. Then by $(iii)$ of  Proposition \ref{0117-3}, $(i)$ holds.
The proof is complete.
\end{proof}
\end{Proposition}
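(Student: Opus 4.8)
The plan is to prove the cycle $(i)\Rightarrow(ii)\Rightarrow(iii)\Rightarrow(i)$, exploiting throughout the identification coming from Theorem \ref{0113-2} with $p=2$: for any $h\in H(\BB)$ with $h(0)=0$,
$$\|T_hf\|_{A_\om^2}^2\approx\int_\BB|f(z)|^2|\Re h(z)|^2\om^*(z)\,dV(z).$$
Testing $T_{g-g_r}$ against the normalized functions $f_{a,2}=F_{a,2}/(\om(S_a))^{1/2}$ from Lemma \ref{1208-4}, and using $|F_{a,2}|\approx1$ on $S_a$, translates the $\mathcal{C}^1(\om^*)$-seminorm of $g-g_r$ into a supremum of Volterra norms via
$$\frac{\int_{S_a}|\Re(g-g_r)(z)|^2\om^*(z)\,dV(z)}{\om(S_a)}\lesssim\|T_{g-g_r}f_{a,2}\|_{A_\om^2}^2,$$
which is the quantity I must make uniformly small in $a$.

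For $(i)\Rightarrow(ii)$ I would split the supremum over $a\in\BB$ by the size of $|a|$ relative to $r$. Since $g\in\mathcal{C}_0^1(\om^*)$ means, by Theorem \ref{0308-2}, that $T_g:A_\om^2\to A_\om^2$ is compact, Lemma \ref{1210-2} applied to $\{f_{a,2}\}$ (bounded in $A_\om^2$, tending to $0$ uniformly on compacta as $|a|\to1$) gives $\|T_gf_{a,2}\|_{A_\om^2}\to0$; fix $r_0$ so that this and the Bloch bound $(1-|a|)|\Re g(a)|$ are below $\varepsilon$ for $|a|>r_0$. In the range $r_0<|a|\le\frac1{2-r}$ the estimate (\ref{0308-3}) bounds $\|T_{g_r}f_{a,2}\|_{A_\om^2}^2$ by $\|T_gf_{a,2}\|_{A_\om^2}^2\le\varepsilon^2$. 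For $|a|>\frac1{2-r}$ I would instead use $g\in\mathcal{C}^1(\om^*)\subset\B$ (Proposition \ref{0117-3}) to control $M_\infty(r,\Re g)$, together with the integral estimate $\int_\BB|f_{a,2}(z)|^2\om^*(z)\,dV(z)\lesssim(1-|a|^2)^2$ from Lemma \ref{1210-3}, yielding a contribution $\lesssim(1-|a|^2)^2M_\infty^2(2-\tfrac1{|a|},\Re g)\lesssim\varepsilon^2$. Finally, for $|a|\le r_0$ the quantity is dominated by $\|\Re g-\Re g_r\|_{A_{\om^*}^2}^2/\om(S_{r_0})$, which tends to $0$ as $r\to1$ since radial dilations converge in $A_{\om^*}^2$ (here $\om^*\in\R$ by Lemma \ref{1210-3}); choosing $r$ close to $1$ makes all three regimes simultaneously $O(\varepsilon)$.

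For $(ii)\Rightarrow(iii)$ I would approximate each $g_{1-1/n}$, which extends holomorphically past $\overline{\BB}$, by a polynomial $p_n$ with $\|\Re g_{1-1/n}-\Re p_n\|_{H^\infty}<1/n$, and then estimate $\|g-p_n\|_{\mathcal{C}^1(\om^*)}\lesssim\|g-g_{1-1/n}\|_{\mathcal{C}^1(\om^*)}+\|\Re g_{1-1/n}-\Re p_n\|_{H^\infty}^2\to0$, using that a bounded radial derivative controls the $\mathcal{C}^1(\om^*)$-seminorm. For $(iii)\Rightarrow(i)$, every polynomial $p$ lies in $\mathcal{C}_0^1(\om^*)$ because Lemmas \ref{1212-1} and \ref{1210-3} give $\frac{\int_{S_a}|\Re p|^2\om^*\,dV}{\om(S_a)}\lesssim(1-|a|)^2\|\Re p\|_{H^\infty}^2\to0$; since $\mathcal{C}_0^1(\om^*)$ is closed in $\mathcal{C}^1(\om^*)$ by Proposition \ref{0117-3}(iii) and $p_k\to g$, we conclude $g\in\mathcal{C}_0^1(\om^*)$.

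The main obstacle is the $(i)\Rightarrow(ii)$ step, and within it the regime $\frac1{2-r}<|a|<1$, where the threshold couples $r$ and $|a|$ so that the crude comparison (\ref{0308-3}) is unavailable. There one must balance the Bloch-type growth of $\Re g_r$ against the smallness $\int_\BB|f_{a,2}|^2\om^*\,dV\lesssim(1-|a|^2)^2$ of the test functions, and then produce a single $r$, independent of $a$, for which all three regimes are simultaneously controlled; this uniformity is the delicate part of the argument.
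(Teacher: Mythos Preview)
Your proposal is correct and follows essentially the same approach as the paper's own proof: the same cycle $(i)\Rightarrow(ii)\Rightarrow(iii)\Rightarrow(i)$, the same test functions $f_{a,2}$, the same three-regime split in $(i)\Rightarrow(ii)$ (using (\ref{0308-3}) for $r_0<|a|\le\frac{1}{2-r}$, the little-Bloch bound together with the estimate $\int_\BB|f_{a,2}|^2\om^*\,dV\lesssim(1-|a|^2)^2$ for $|a|>\frac{1}{2-r}$, and $A_{\om^*}^2$-convergence of $\Re g_r$ for $|a|\le r_0$), and the same polynomial-approximation and closedness arguments for the remaining implications. The only minor point is that the integral estimate $\int_\BB|f_{a,2}|^2\om^*\,dV\lesssim(1-|a|^2)^2$ also requires the standard Forelli--Rudin kernel estimate (Theorem~1.12 in \cite{Zk2005}) in addition to Lemma~\ref{1210-3}, and the Bloch-type control in the regime $|a|>\frac{1}{2-r}$ really uses $g\in\B_0$ (from $\mathcal{C}_0^1(\om^*)\subset\B_0$), not merely $g\in\B$, so that $(1-|a|)M_\infty(2-\tfrac{1}{|a|},\Re g)$ is small rather than just bounded.
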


\end{document}